\DeclareTextSymbol{\thh}{T1}{254}
\def\th{\textnormal{\thh}}
\newtheorem{thm}{Theorem}[section]
\newtheorem{lemma}[thm]{Lemma}
\newtheorem{prop}[thm]{Proposition}
\newtheorem{cor}[thm]{Corollary}
\theoremstyle{definition}
\newtheorem{df}[thm]{Definition}
\newtheorem{rmk}[thm]{Remark}
\newtheorem{rmks}[thm]{Remarks}
\newtheorem{fact}[thm]{Fact}
\newtheorem{facts}[thm]{Facts}
\newtheorem{question}[thm]{Question}
\@undefined\usepackage[usenames,dvips]{color}
\else\usepackage[usenames,dvipsnames]{color}
\definecolor{isaac}{rgb}{0,0.5,0}
\definecolor{error}{rgb}{0.8,0,0}
\definecolor{clifton}{rgb}{0,0,0.8}
\newcommand{\Z}{\mathbb{Z}}
\newcommand{\curly}[1]{\mathcal{#1}}
\newcommand{\B}{\curly{B}}
\newcommand{\K}{\curly{K}}
\newcommand{\m}{\mathcal{M}}
\newcommand{\bo}[1]{\boldsymbol{#1}}
\def\indsym#1#2{%
  \setbox0=\hbox{$\m@th#1x$}%
  \kern\wd0%
  \hbox to 0pt{\hss$\m@th#1\mid$\hbox to 0pt{$\m@th#1^{#2}$}\hss}%
  \lower.9\ht0\hbox to 0pt{\hss$\m@th#1\smile$\hss}%
  \kern\wd0}
\newcommand{\ind}[1][]{\mathop{\mathpalette\indsym{#1}}}
\def\nindsym#1#2{%
  \setbox0=\hbox{$\m@th#1x$}%
  \kern\wd0%
  \hbox to 0pt{\hss$\m@th#1\not$\kern1.4\wd0\hss}
  \hbox to 0pt{\hss$\m@th#1\mid$\hbox to 0pt{$\m@th#1^{\,#2}$}\hss}%
  \lower.9\ht0\hbox to 0pt{\hss$\m@th#1\smile$\hss}%
  \kern\wd0}
\newcommand{\nind}[1][]{\mathop{\mathpalette\nindsym{#1}}}
\def\dotminussym#1#2{%
  \setbox0=\hbox{$\m@th#1-$}%
  \kern.5\wd0%
  \hbox to 0pt{\hss\hbox{$\m@th#1-$}\hss}%
  \raise.6\ht0\hbox to 0pt{\hss$\m@th#1.$\hss}%
  \kern.5\wd0}
\newcommand{\dotminus}{\mathbin{\mathpalette\dotminussym{}}}
\def \K {{\mathcal K}}
\def \r { {\mathbb R} }
\def \<{\langle}
\def \>{\rangle}
\def \z {{\mathbb Z}}
\def \*Z {{{^*}\Z}}
\def \((  {(\!(}
\def \)) {)\!)}
\def \tp{\operatorname{tp}}
\def \acl{\operatorname{acl}}
\def \dcl{\operatorname{dcl}}
\def \p{\mathbb{P}}
\def \int{\operatorname{int}}
\numberwithin{equation}{section}
\def \l{\llbracket}
\def \rr{\rrbracket}
\def\thind{\ind[\th]}
\def \ethind{\ind[\th,\epsilon]}
\def\nthind{\nind[\th]}
\def \z{\mathcal{Z}}
\def \i{\operatorname{Ind}}
\def \acl{\operatorname{acl}}
\def \bdd{\operatorname{bdd}}
\def \Aut{\operatorname{Aut}}
\def \eq{\operatorname{eq}}
\def \feq{\operatorname{feq}}
\def \k{\mathcal{K}}
\def \real{\operatorname{real}}
\def \U{\mathfrak{U}}
\def \UU{\mathbb{U}}
\begin{document}

\title{Thorn-forking in Continuous Logic}

\author{Clifton Ealy and Isaac Goldbring}

\address {Western Illinois University, Department of Mathematics,
476 Morgan Hall, 1 University Circle,
Macomb, IL 61455}
\email{CF-Ealy@wiu.edu}

\address {University of California, Los Angeles, Department of Mathematics, 520 Portola Plaza, Box 951555, Los Angeles, CA 90095-1555, USA}
\email{isaac@math.ucla.edu}
\urladdr{www.math.ucla.edu/~isaac}

\begin{abstract}
We study thorn forking and rosiness in the context of continuous logic.  We prove that the Urysohn sphere is rosy (with respect to finitary imaginaries), providing the first example of an essentially continuous unstable theory with a nice notion of independence.  In the process, we show that a real rosy theory which has weak elimination of finitary imaginaries is rosy with respect to finitary imaginaries, a fact which is new even for classical real rosy theories.  
\end{abstract}
\maketitle

\section{Introduction}

In classical model theory, thorn forking independence was defined by Tom Scanlon, and investigated by Alf Onshuus and then by the first author as a common generalization of forking independence in stable theories and (all known) simple theories as well as the independence relation in o-minimal theories given by topological dimension.  More generally, a theory $T$ is called rosy if thorn independence is a strict independence relation for $T^{\eq}$.  If $T$ is rosy, then thorn independence is the weakest notion of independence for $T^{\eq}$.  It thus follows that all simple theories and o-minimal theories are rosy.  It is the purpose of this paper to define and investigate thorn independence and rosiness in the context of \emph{continuous logic}.

Continuous logic is a generalization of first-order logic which is suited for studying structures based on complete metric spaces, called \emph{metric structures}.  Moreover,  one has continuous versions of nearly all of the notions and theorems from classical model theory.  In particular, stable theories have been studied in the context of continuous logic; see \cite{BU}.  Nearly all of the ``essentially continuous'' theories that were first studied in continuous logic are stable, e.g. infinite-dimensional Hilbert spaces, atomless probability algebras, $L^p$-Banach lattices, and richly branching $\r$-trees; see \cite{BBHU} and \cite{Carlisle}.  Here, ``essentially continuous'' is a vague term used to eliminate classical first-order structures, viewed as continuous structures by equipping them with the discrete metric, from the discussion.  One can also make sense of the notion of a continuous simple theory; see \cite{B4}, where the notion of simplicity is studied in the more general context of compact abstract theories.  However, there are currently no ``natural'' examples of an essentially continuous, simple, unstable theory and all attempts to produce an essentially continuous, simple, unstable theory have failed.  For example, adding a generic automorphism to almost all known essentially continuous stable theories (e.g. infinite-dimensional Hilbert spaces, structures expanding Banach spaces, probability algebras) yields a theory which is once again stable; this result appears to be folklore and has not appeared anywhere in the literature.  Another failed attempt involves taking the Keisler randomization of a (classical or continuous) simple, unstable theory.  More precisely, either a Keisler randomization is dependent (in which case, if it is simple, then it is stable) or it is not simple;  see \cite{B2}.  In this paper, we will give an example of an essentially continuous theory which is not simple but is rosy (with respect to finitary imaginaries), namely the \emph{Urysohn sphere}, providing the first example of an essentially continuous theory which is unstable and yet possesses a nice notion of independence.

There are many natural ways of defining thorn independence for continuous logic, yielding many notions of rosiness.  The approach which shares the most features with the classical notion is the geometric approach, where one defines thorn-independence to be the independence relation one obtains from the relation of algebraic independence after forcing base monotonicity and extension to hold; this is the approach to thorn independence taken by Adler in \cite{Adler}.  This notion of thorn independence in continuous logic has the new feature that finite character is replaced by countable character, which should not be too surprising to continuous model theorists as the notions of definable and algebraic closure also lose finite character in favor of countable character in the continuous setting.  In order to salvage finite character, we present alternative approaches to thorn independence, yielding notions of rosiness for which we do not know any essentially continuous unstable theories that are rosy.

We now outline the structure of the paper.  In Section 2, we describe some of our conventions concerning continuous logic as well as prove some facts concerning the extensions of definable functions to elementary extensions.  These latter facts have yet to appear in the literature on continuous logic and will only be used in Section 5 in an application of the rosiness of the Urysohn sphere to definable functions.  In Section 3, we introduce the geometric approach to thorn independence and prove some basic results about this notion.  In Section 4, we discuss weak elimination of finitary imaginaries and prove that a continuous real rosy theory which has weak elimination of finitary imaginaries is rosy with respect to finitary imaginaries.  In particular, this shows that a classical real rosy  theory which has weak elimination of imaginaries is rosy, a fact that has yet to appear in the literature on classical rosy theories.  In Section 5, we prove that the Urysohn sphere is real rosy and has weak elimination of finitary imaginaries, whence we conclude that it is rosy with respect to finitary imaginaries.  In Section 6, we introduce other notions of thorn independence and develop properties of these various notions.  In Section 7, we show that if $T$ is a classical theory for which the Keisler randomization $T^R$ of $T$ is \emph{strongly} rosy, then $T$ is rosy; here strongly rosy is one of the alternative notions of rosiness defined in Section 6.

We assume that the reader is familiar with the rudiments of continuous logic; otherwise, they can consult the wonderful survey \cite{BBHU}.  For background information on rosy theories, one can consult \cite{Alf} and \cite{Adler}.  All terminology concerning independence relations will follow \cite{Adler}.    

We would like to thank Ita\"i Ben Yaacov and Ward Henson for helpful discussions involving this work.

\section{Model Theoretic Preliminaries}

In this section, we establish some conventions and notations as well as gather some miscellaneous model-theoretic facts.  First, let us establish a convention concerning formulae.  All formulae will have their variables separated into three parts:  the \emph{object variables}, the \emph{relevant parameter variables}, and the \emph{irrelevant parameter variables}, so a formula has the form $\varphi(x,y,z)$, where $x$ is a multivariable of object variables, $y$ is a multivariable of relevant parameter variables, and $z$ is a multivariable of irrelevant parameter variables.  This distinction will becomes useful in our discussion of thorn-forking, for often only some of the parameter variables are allowed to vary over a type-definable set.  While this distinction is usually glossed over in classical logic, we make a point of discussing it here as the metric on countable tuples is sensitive to the presentation of the tuple.  For ease of exposition, we make the following further convention.  When considering a formula $\varphi(x,y,z)$, we may write $\varphi(x,b)$ to indicate that $b$ is a $y$-tuple being substituted into $\varphi$ for $y$ and we do not care about what parameters are being plugged in for $z$.  When using this convention, if $b'$ is another $y$-tuple, then $\varphi(x,b')$ will denote the formula obtained from $\varphi(x,y,z)$ by substituting $b'$ for $y$ and the \emph{same} tuple for $z$ as in $\varphi(x,b)$.  Finally, let us say that we maintain the conventions of this paragraph for definable predicates as well.

We will use the following metrics on cartesian products.  Suppose that $(M_i,d_i)_{i<\omega}$ are metric spaces.  For two finite tuples $x=(x_0,\ldots,x_n)$ and $y=(y_0,\ldots,y_n)$ from $\prod_{i\leq n} M_i$, we set $d(x,y)=\max_{i\leq n}d_i(x_i,y_i)$.  For two countably infinite tuples $x=(x_i \ | \ i<\omega)$ and $y=(y_i \ | \ i<\omega)$ from $\prod_{i<\omega} M_i$, we set $d(x,y):=\sum_i 2^{-i}d(x_i,y_i)$.  Further suppose that $\mathcal{L}$ is a bounded continuous signature.  Define the signature $\mathcal{L}_\omega$ to be the signature $\mathcal{L}$ together with new sorts for countably infinite products of sorts of $\mathcal{L}$.  We define the metric on these new sorts as above.  We also include projection maps:  if $(S_i \ | \ i<\omega)$ is a countable collection of sorts of $\mathcal{L}$, we add function symbols $\pi_{S,j}:\prod_i S_i \to S_j$ to the language for each $j<\omega$.  Each $\mathcal{L}$-structure expands to an $\mathcal{L}_\omega$-structure in the obvious way.

For any $r\in \r^{>0}$ and any $x\in [0,1]$, we set $r\odot x:=\max(rx,1)$.  Also, for $x,y\in \r^{>0}$, we set $x\dotminus y:=\max(x-y,0)$.

In all but the last section of this paper, $L$ denotes a fixed bounded continuous signature.  For simplicity, let us assume that $L$ is $1$-sorted and the metric $d$ is bounded by $1$.  We also fix a complete $L$-theory $T$ and a monster model $\m$ for $T$.  We let $\kappa(\m)$ denote the saturation cardinal for $\m$ and we say that a parameterset is \emph{small} if it is of cardinality $<\kappa(\m)$.

If $\varphi(x)$ is an $\m$-definable predicate, then we set $\z(\varphi(x))$ to be the \emph{zeroset of $\varphi(x)$}, that is, $\z(\varphi(x))=\{a\in \m_x \ | \ \varphi(a)=0\}$.  Likewise, if $p(x)$ is a type, we write $\z(p(x))=\bigcap \{\z(\varphi(x)) \ | \ ``\varphi(x)=0\text{''}\in p\}$.

Let us briefly recall the $\eq$-construction for continuous logic.  Suppose that $\varphi(x,y)$ is a definable predicate, where $x$ is a finite tuple of object variables and $y$ is a countable tuple of parameter variables.  Then in $\m^{\eq}$, there is a sort $S_\varphi$ whose objects consist of \emph{canonical parameters} of instances of $\varphi$.  Formally, $S_\varphi=\m_y/(d_\varphi=0)$, where $d_\varphi$ is the pseudometric on $\m_y$ given by $$d_\varphi(a,a'):=\sup_x|\varphi(x,a)-\varphi(x,a')|.$$  (Ordinarily, one has to take the completion of $\m_y/(d_\varphi=0)$, but the saturation assumption on $\m$ guarantees that this metric space is already complete.)  As in classical logic, one also adds appropriate projection maps to the language.  For more details on the $\eq$-construction in continuous logic, including axiomatizations of $T^{\eq}$, see Section 5 of \cite{BU}.  In the case when $\varphi(x,y)$ is a \emph{finitary} definable predicate, that is, when $y$ is finite, we say that the elements of $S_\varphi$ are \emph{finitary imaginaries}.  We let $\m^{\feq}$ denote the reduct of $\m^{\eq}$ which retains only sorts of finitary imaginaries.    If $a\in \m^{\eq}$ and $b$ is an element of the equivalence class corresponding to $a$, we write $\pi(b)=a$.  If $A\subseteq \m^{\eq}$ and $B\subseteq \m$, we write $\pi(B)=A$ to indicate the fact that the elements of $B$ are representatives of classes of elements of $A$.  

The remainder of this section will be devoted to understanding extensions to $\m$ of definable functions on small elementary submodels of $\m$; this material will only be used at the end of Section 5.  Suppose that $M$ is a small elementary submodel of $\m$, $A\subseteq M$ is a set of parameters, and $P:M^n\to [0,1]$ is a predicate definable in $M$ over $A$.  Then there exists a unique predicate $Q:\m^n\to [0,1]$ definable in $\m$ over $A$ which has $P$ as its restriction to $M^n$; see \cite{BBHU}, Proposition 9.8.  The predicate $Q$ satisfies the additional property that $(M,P)\preceq (\m,Q)$.  We call $Q$ the \emph{natural extension of $P$ to $\m$}.  Now suppose that $f:M^n\to M$ is $A$-definable, where $A\subseteq M$.  Let $P:M^{n+1}\to [0,1]$ be the $A$-definable predicate $d(f(x),y)$.  Let $Q:\m^{n+1}\to [0,1]$ be the natural extension of $P$ to $\m^{n+1}$.  Then, since $(M,P)\preceq (\m,Q)$, the zeroset of $Q$ defines the graph of a function $g:\m^n\to \m$.  Moreover, $g$ is $A$-definable and extends $f$.  (See \cite{BBHU}, Proposition 9.25)  We call $g$ the \emph{natural extension of $f$ to $\m^n$}.

In Lemma \ref{L:injectiveextension} below, we seek to show that under certain mild saturation assumptions, the natural extension of $f$ to $\m^n$ can preserve some of the properties of $f$.  First, we need two lemmas, the first of which is a generalization of \cite{BBHU}, Proposition 7.14.

\begin{lemma}\label{L:714generalized}
Suppose $M$ is a small elementary submodel of $\m$ and $P,Q:M^n\to [0,1]$ are predicates definable in $M$.  Suppose that either of the following two conditions hold:
\begin{enumerate}
\item [(i)]$M$ is $\omega_1$-saturated, or
\item [(ii)] $M$ is $\omega$-saturated and $P$ is definable over a finite set of parameters from $M$.
\end{enumerate}
Then the following are equivalent:
\begin{enumerate}
\item For all $a\in M^n$, if $P(a)=0$, then $Q(a)=0$.
\item For all $\epsilon>0$, there is $\delta>0$ such that for all $a\in M^n$, $(P(a)\leq \delta\Rightarrow (Q(a)< \epsilon)$.
\item There is an increasing, continuous function $\alpha:[0,1]\to [0,1]$ with $\alpha(0)=0$ such that $Q(a)\leq \alpha(P(a))$ for all $a\in M^n$.
\end{enumerate}
\end{lemma}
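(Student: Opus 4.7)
The easy implications require no saturation. For (3) $\Rightarrow$ (2), continuity of $\alpha$ at $0$ together with $\alpha(0)=0$ furnishes, for any $\epsilon>0$, a $\delta>0$ with $\alpha(\delta)<\epsilon$, so $P(a)\leq\delta$ forces $Q(a)\leq\alpha(P(a))\leq\alpha(\delta)<\epsilon$. For (2) $\Rightarrow$ (1), $P(a)=0$ yields $P(a)\leq\delta$ for every $\delta>0$, whence $Q(a)<\epsilon$ for every $\epsilon>0$, i.e.\ $Q(a)=0$.

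To prove (1) $\Rightarrow$ (2), my plan is to argue by contraposition. Supposing (2) fails yields some $\epsilon>0$ and a sequence $(a_k)$ in $M^n$ with $P(a_k)\leq 1/k$ and $Q(a_k)\geq\epsilon$; the goal is to use the saturation hypothesis to produce a single $a\in M^n$ satisfying $P(a)=0$ yet $Q(a)>0$, contradicting (1). Under (i), both $P$ and $Q$ are uniform limits of $L_M$-formulas and hence defined over countably many parameters, so the partial type
$$\Sigma(x):=\{P(x)\leq 1/k : k\geq 1\}\cup\{Q(x)\geq\epsilon\}$$
uses only countably many parameters and is finitely satisfied by the $a_k$, so is realized by $\omega_1$-saturation of $M$. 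Under (ii), $Q$ may require infinitely many parameters, so I will first approximate $Q$ uniformly on $M^n$, to within $\epsilon/3$, by an $L_M$-formula $\varphi(x)$ whose parameter set is finite (possible since $Q$ is a uniform limit of such formulas). The replacement partial type
$$\Sigma'(x):=\{P(x)\leq 1/k : k\geq 1\}\cup\{\varphi(x)\geq 2\epsilon/3\}$$
then has only finitely many parameters and is finitely satisfied by the $a_k$, so will be realized by $\omega$-saturation in some $a\in M^n$, yielding $P(a)=0$ and $Q(a)\geq\varphi(a)-\epsilon/3\geq\epsilon/3>0$.

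For (2) $\Rightarrow$ (3), I will build $\alpha$ explicitly. Using (2), I would choose a strictly decreasing sequence $\delta_1>\delta_2>\cdots$ in $(0,1)$ with $\delta_k\to 0$ such that $P(a)\leq\delta_k$ implies $Q(a)\leq 2^{-k}$, and define $\alpha:[0,1]\to[0,1]$ by $\alpha(0)=0$, $\alpha(\delta_k)=2^{1-k}$, $\alpha(1)=1$, linearly interpolated between consecutive nodes. Then $\alpha$ is continuous and non-decreasing with $\alpha(0)=0$; for $a$ with $P(a)\in(\delta_{k+1},\delta_k]$ one has $Q(a)\leq 2^{-k}=\alpha(\delta_{k+1})\leq\alpha(P(a))$, while $P(a)=0$ gives $Q(a)=0=\alpha(0)$.

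The main obstacle will be case (ii) of $(1)\Rightarrow(2)$: since $Q$ need not be definable over any finite parameter set, one cannot directly enroll the condition $Q(x)\geq\epsilon$ in a finite-parameter partial type and invoke $\omega$-saturation. The uniform $\epsilon/3$-approximation of $Q$ by a finite-parameter formula $\varphi$, combined with the $2\epsilon/3$-threshold on $\varphi$, is the technical device restoring just enough ``finiteness'' of parameters to bring $\omega$-saturation to bear, at the cost of a slightly weakened but still strictly positive lower bound on $Q(a)$ in the final contradiction.
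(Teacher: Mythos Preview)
Your proof is correct and follows essentially the same approach as the paper's: argue $(1)\Rightarrow(2)$ by contraposition, extract a sequence $(a_k)$ with $P(a_k)\to 0$ and $Q(a_k)\geq\epsilon$, then use saturation to realize a partial type producing $a$ with $P(a)=0$ and $Q(a)>0$. The only cosmetic differences are that the paper handles both cases with a single partial type by replacing \emph{both} $P$ and $Q$ with approximating formulas from the outset (so the type $\Gamma$ is literally a set of formula-conditions in either case), whereas you work with the definable-predicate conditions directly under (i) and approximate only $Q$ under (ii); and the paper simply cites \cite{BBHU}, Proposition~2.10 for $(2)\Rightarrow(3)$, while you spell out the piecewise-linear construction of $\alpha$.
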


\begin{proof}
We only need to prove the direction $(1)\Rightarrow (2)$, as the direction $(2)\Rightarrow (3)$ follows immediately from Proposition 2.10 in \cite{BBHU} and the direction $(3)\Rightarrow (1)$ is trivial.  Suppose that $P(x)$ is the uniform limit of the sequence $(\varphi_m(x) \ | \ m\geq 1)$ and $Q(x)$ is the uniform limit of the sequence $(\psi_m(x) \ | \ m\geq 1)$, where each $\varphi_m(x)$ and $\psi_m(x)$ are formulae with parameters from $M$.  If condition (ii) in the statement of the lemma holds, then we further assume that the parameters from each of the $\varphi_n$'s are contained in some finite subset of $M$.  Moreover, we may assume that $|P(x)-\varphi_m(x)|,|Q(x)-\psi_m(x)|\leq \frac{1}{m}$ for each $m\geq 1$ and each $x\in \m^n$.  Now suppose (2) fails for some $\epsilon>0$.  Then for every $m\geq 1$, there is $a_m\in M^n$ such that $P(a_m)\leq \frac{1}{m}$ and $Q(a_m)\geq \epsilon$.  Let $k\geq 1$ be such that $\epsilon>\frac{3}{k}$.

\noindent \textbf{Claim:}  The collection of conditions $$\Gamma(x):=\{\psi_k(x)\geq \frac{2}{k}\}\cup \{\varphi_m(x)\leq \frac{2}{m} \ | \ m\geq 1\}$$ is finitely satisfiable.

\noindent \textbf{Proof of Claim:}  Consider $m_1,\ldots,m_s\geq 1$.  Set $m':=\max(m_1,\ldots,m_s)$.  Then $\psi_k(a_{m'})\geq Q(a_{m'})-\frac{1}{k}\geq \frac{2}{k}$ and, for each $i\in \{1,\ldots,s\}$, we have $\varphi_{m_i}(a_{m'})\leq P(a_{m'})+\frac{1}{m_i}\leq \frac{1}{m'}+\frac{1}{m_i}\leq\frac{2}{m_i}$.

By the claim and either of assumptions (i) or (ii), we have $a\in M^n$ realizing $\Gamma(x)$.  Then $Q(a)\geq \psi_k(a)-\frac{1}{k}\geq \frac{1}{k}$.  Also, $P(a)\leq \varphi_m(x)+\frac{1}{m}\leq \frac{3}{m}$ for all $m\geq 1$, whence $P(a)=0$.  Thus, (2) fails, finishing the proof of the lemma.
\end{proof}

The import of the above lemma is the following.  Working in the notation of the lemma, suppose that $P$ and $Q$ satisfy (1) and either (i) or (ii) holds.  Suppose $P'$ and $Q'$ denote the natural extensions of $P$ and $Q$ to $\m^n$.  Then it follows that, for all $a\in \m^n$, $P'(a)=0\Rightarrow Q'(a)=0$.  This is because $(1)$ is equivalent to $(3)$, which can be expressed by a formula in the signature of the structure $(M,P,Q)$.  Since $(M,P,Q)\preceq (\m,P',Q')$, we have that (3) holds with $P'$ and $Q'$ replacing $P$ and $Q$.  This in turn implies that $(1)$ holds with $P'$ and $Q'$ replacing $P$ and $Q$.

\begin{lemma}\label{L:natext}
Suppose $M$ is a small $\omega$-saturated elementary submodel of $\m$ and $A\subseteq M$ is countable.  Let $f:M^n\to M$ be an $A$-definable function and let $g:\m^n\to \m$ be the natural extension of $f$ to $\m^n$.  Let $R:M^{2n}\to[0,1]$ be the predicate defined by $R(a,b)=d(f(a),f(b))$ for all $a,b\in M^n$, which is definable in $M$ over $A$.  Let $S:\m^{2n}\to[0,1]$ be the natural extension of $R$ to $\m^{2n}$.  Then $S(a,b)=d(g(a),g(b))$ for all $a,b\in \m^n$.
\end{lemma}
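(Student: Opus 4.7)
My plan is to identify $d(g(\cdot), g(\cdot))$ as a second $A$-definable predicate on $\m^{2n}$ extending $R$, and then invoke the uniqueness clause of Proposition 9.8 of \cite{BBHU}. Setting $T(a,b) := d(g(a), g(b))$, the fact that $g$ extends $f$ yields $T(a,b) = d(f(a), f(b)) = R(a,b)$ for all $a, b \in M^n$, so $T|_{M^{2n}} = R$. The entire content of the proof is therefore to check that $T$ is an $A$-definable predicate on $\m^{2n}$; once this is done, uniqueness of the natural extension forces $T = S$.

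Because $g$ is $A$-definable on $\m^n$, the graph predicate $(x, y) \mapsto d(g(x), y)$ on $\m^{n+1}$ is $A$-definable. Composing with coordinate projections yields $A$-definable predicates $d(g(a), u)$ and $d(g(b), v)$ on $\m^{2n+2}$, which I combine with the basic predicate $d(u, v)$, continuous connectives, and an inf-quantifier to produce, for each $N \geq 1$, the $A$-definable predicate
$$T_N(a, b) := \inf_{u, v \in \m} \min\bigl(1,\ d(u, v) + N \cdot \max(d(g(a), u),\ d(g(b), v))\bigr).$$
I claim $T_N \to T$ uniformly as $N \to \infty$. Specializing $(u, v) = (g(a), g(b))$ yields $T_N(a, b) \leq d(g(a), g(b))$. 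Conversely, for any pair $(u, v)$: if $\max(d(g(a), u), d(g(b), v)) \geq 1/N$ the expression under the infimum equals $1 \geq T(a, b)$, while otherwise the triangle inequality forces $d(u, v) \geq d(g(a), g(b)) - 2/N$, so the expression is at least $d(g(a), g(b)) - 2/N$. Hence $|T_N - T| \leq 2/N$ uniformly; since definable predicates are closed under uniform limits, $T$ is $A$-definable, which is what we wanted.

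The main obstacle lies in the middle step, namely finding a formula that captures $T$ within continuous logic. The naive sum-penalty $\inf_{u,v}[d(u,v) + N(d(g(a),u) + d(g(b),v))]$ interacts poorly with the $[0,1]$-boundedness inherent to continuous logic, which is why the truncation $\min(1,\cdot)$ and a max-penalty rather than a sum-penalty are built into $T_N$; with these in place, the triangle inequality delivers the required bounds immediately. Note that no saturation assumption on $M$ is used in this argument; the $\omega$-saturation hypothesis of the lemma is inherited from the preceding discussion in order to guarantee that $g$ is indeed a well-defined function on $\m^n$ extending $f$.
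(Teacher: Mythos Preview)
Your argument is correct, and it takes a genuinely different route from the paper's. The paper does not invoke the uniqueness clause of Proposition~9.8 of \cite{BBHU} at all; instead it fixes formulae $\varphi_m$ approximating $P(x,y)=d(f(x),y)$ within $1/m$, observes that in $M$ the implication ``$\varphi_m(a,c),\varphi_m(b,d)\le 1/m \Rightarrow |R(a,b)-d(c,d)|\le 4/m$'' holds, and then uses Lemma~\ref{L:714generalized} to transfer this implication from $M$ to $\m$ (with $S$ in place of $R$), finally plugging in $c=g(a)$, $d=g(b)$. That application of Lemma~\ref{L:714generalized} is precisely where the $\omega$-saturation hypothesis is consumed. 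Your approach bypasses Lemma~\ref{L:714generalized} entirely: once you exhibit $T(a,b)=d(g(a),g(b))$ as an $A$-definable predicate on $\m^{2n}$ restricting to $R$ on $M^{2n}$, uniqueness of the natural extension finishes immediately, and no saturation is invoked anywhere in the body of the argument. What the paper's route buys is an explicit quantitative link between the approximants of $P$ and the discrepancy $|S-d(g(\cdot),g(\cdot))|$, while your route buys a cleaner and hypothesis-lighter proof. Your construction of $T_N$ is legitimate: the map $(w,x,y)\mapsto \min(1,\,w+N\max(x,y))$ is a continuous connective $[0,1]^3\to[0,1]$, the inputs $d(u,v)$, $d(g(a),u)$, $d(g(b),v)$ are $A$-definable predicates in $(a,b,u,v)$, and the $\inf_{u,v}$ and uniform limit preserve $A$-definability.
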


\begin{proof}
Let $(\varphi_m(x,y) \ | \ m\geq 1)$ be a sequence of formulae with parameters from $A$ converging uniformly to the predicate $P(x,y):=d(f(x),y)$.  Further assume that $|P(x,y)-\varphi_m(x,y)|\leq \frac{1}{m}$ for all $(x,y)\in M^{n+1}$ and all $m\geq 1$.  Note that if $(a,c),(b,d)\in M^{n+1}$ are such that $\varphi_m(a,c),\varphi_m(b,d)\leq \frac{1}{m}$, then $|R(a,b)-d(c,d)|\leq \frac{4}{m}$.  By Lemma \ref{L:714generalized}, we have that for all $(a,c),(b,d)\in \m^{n+1}$, if $\varphi_m(a,c),\varphi_m(b,d)\leq \frac{1}{m}$, then $|S(a,b)-d(c,d)|\leq \frac{4}{m}$.  It remains to show that $\varphi_m(a,g(a)),\varphi_m(b,g(b))\leq \frac{1}{m}$ for each $m\geq 1$.  However, this follows from the fact that $Q(a,g(a))=Q(b,g(b))=0$ and $|Q(x,y)-\varphi_m(x,y)|\leq \frac{1}{m}$ for all $(x,y)\in \m^{n+1}$ and all $m\geq 1$.
\end{proof}

\begin{lemma}\label{L:injectiveextension}
Suppose $M$ is a small elementary submodel of $\m$ and $A\subseteq M$ is countable.  Let $f:M^n\to M$ be an $A$-definable function and let $g:\m^n\to \m$ be the natural extension of $f$ to $\m^n$.
\begin{enumerate}
\item Suppose $M$ is $\omega$-saturated and $f$ is an isometric embedding.  Then $g$ is also an isometric embedding.
\item Suppose that either:
\begin{enumerate}
\item $M$ is $\omega_1$-saturated, or
\item $M$ is $\omega$-saturated and $A$ is finite.
\end{enumerate}
Further suppose that $f$ is injective.  Then $g$ is injective.
\end{enumerate}
\end{lemma}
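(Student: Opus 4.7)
The plan is to reduce both parts of the lemma to the two preceding lemmas by expressing the relevant properties of $f$ as statements about values of definable predicates on $M^{2n}$, and then transferring these statements to $\m^{2n}$ using the natural-extension machinery.

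For part (1), let $R(a,b) = d(f(a),f(b))$ on $M^{2n}$, which is $A$-definable. Since $M$ is $\omega$-saturated and $A$ is countable, Lemma \ref{L:natext} applies and tells us that the natural extension $S$ of $R$ to $\m^{2n}$ satisfies $S(a,b) = d(g(a),g(b))$ for all $a,b \in \m^n$. Now the isometric-embedding hypothesis says exactly that $R(a,b) = d(a,b)$ on $M^{2n}$. The predicate $(a,b) \mapsto d(a,b)$ is itself $\emptyset$-definable on $M^{2n}$ and its natural extension to $\m^{2n}$ is $(a,b) \mapsto d(a,b)$. By the uniqueness clause in \cite{BBHU}, Proposition 9.8, natural extensions of equal predicates are equal, so $S(a,b) = d(a,b)$ on $\m^{2n}$, giving $d(g(a),g(b)) = d(a,b)$ for all $a,b \in \m^n$.

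For part (2), rephrase injectivity of $f$ as the implication $P(a,b) = 0 \Rightarrow Q(a,b) = 0$ for all $a,b \in M^n$, where $P(a,b) := d(f(a),f(b))$ and $Q(a,b) := d(a,b)$. Under hypothesis (a) the model $M$ is $\omega_1$-saturated; under hypothesis (b) it is $\omega$-saturated and $P$ is definable over the finite set $A$. In either case we are in a position to apply Lemma \ref{L:714generalized} (with $2n$ in place of $n$), which yields condition (3) of that lemma, namely the existence of an increasing continuous $\alpha:[0,1] \to [0,1]$ with $\alpha(0)=0$ and $Q(a,b) \leq \alpha(P(a,b))$ on $M^{2n}$. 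As noted in the discussion immediately following Lemma \ref{L:714generalized}, condition (3) is expressible in the signature of $(M,P,Q)$ and therefore transfers, via $(M,P,Q) \preceq (\m,P',Q')$, to the natural extensions $P'$ and $Q'$; consequently $P'(a,b) = 0 \Rightarrow Q'(a,b) = 0$ for all $a,b \in \m^n$.

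Finally, identify $P'$ and $Q'$. By Lemma \ref{L:natext}, applied in both hypotheses (since each gives $M$ at least $\omega$-saturated and $A$ countable), $P'(a,b) = d(g(a),g(b))$; and $Q'(a,b) = d(a,b)$ by uniqueness of the natural extension of the $\emptyset$-definable predicate $d$. Thus $d(g(a),g(b)) = 0$ forces $d(a,b) = 0$, i.e.\ $g$ is injective. The only genuinely delicate step is the elementarity transfer in part (2): one must ensure that the $\alpha$ produced by Lemma \ref{L:714generalized} gives an inequality expressible by a single condition of the expanded structure rather than a scheme requiring a new $\alpha$ for each pair, and this is precisely what condition (3) of that lemma guarantees, so the argument goes through cleanly.
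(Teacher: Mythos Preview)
Your proof is correct, and for part (2) it is essentially the same argument as the paper's: both apply Lemma~\ref{L:714generalized} to the pair $P(a,b)=d(f(a),f(b))$, $Q(a,b)=d(a,b)$, invoke the elementarity transfer discussed after that lemma, and then identify $P'$ with $d(g(a),g(b))$ via Lemma~\ref{L:natext}.

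For part (1), however, you take a genuinely different and cleaner route. The paper argues pointwise in $\epsilon$: for each fixed $\epsilon>0$ it applies Lemma~\ref{L:714generalized} to the implication $|d(a,b)-\epsilon|=0\Rightarrow|R(a,b)-\epsilon|=0$ (where the left-hand predicate is $\emptyset$-definable, so clause (ii) of that lemma applies under mere $\omega$-saturation), and then transfers to conclude $|d(a,b)-\epsilon|=0\Rightarrow|S(a,b)-\epsilon|=0$ in $\m$. You instead observe directly that the isometric-embedding hypothesis says $R=d$ as predicates on $M^{2n}$, and since $d$ is $\emptyset$-definable, the uniqueness clause of \cite{BBHU}, Proposition~9.8 forces their natural extensions to agree, giving $S=d$ on $\m^{2n}$ in one stroke. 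Your argument avoids Lemma~\ref{L:714generalized} entirely for this part and is more transparent; the paper's approach has the minor advantage of making both parts parallel applications of the same lemma, but at the cost of an unnecessary $\epsilon$-by-$\epsilon$ detour.
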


\begin{proof}
Define the predicates $R$ and $S$ as in Lemma \ref{L:natext}.

(1)  Fix $\epsilon>0$.  Then for all $a,b\in M^n$, we have $$|d(a,b)-\epsilon|=0 \Rightarrow |R(a,b)-\epsilon|=0.$$  By Lemma \ref{L:714generalized}, we have, for all $a,b\in \m^n$, $$|d(a,b)-\epsilon|=0 \Rightarrow |S(a,b)-\epsilon|=0.$$  It follows that $g$ is an isometric embedding.  

(2)  Since $f$ is injective, we know that, for all $a,b\in M^n$, if $R(a,b)=0$, then $d(a,b)=0$.  By Lemma \ref{L:714generalized}, for all $a,b\in \m^n$, we have $$S(a,b)=0 \Rightarrow d(a,b)=0.$$  It follows that $g$ is injective.
\end{proof}

\section{Basic Properties of Thorn-Forking}

In classical logic, there are two ways of defining thorn-independence:  a ``geometric'' definition and a ``formula'' definition.  Since the geometric definition immediately makes sense in continuous logic, we shall use it to define thorn-independence for continuous logic.  Afterwards, we explain an equivalent formula definition.  

\

\noindent \textbf{Notation:}  For any sets $X,Y$ with $X\subseteq Y$, we set 
$$[X,Y]:=\{Z \ | \ X\subseteq Z \subseteq Y\}.$$

We borrow the following definitions from Adler \cite{Adler} to define thorn-independence in the continuous setting.

\begin{df}
Let $A,B,C$ be small subsets of $\m^{\eq}$.
\begin{enumerate}
\item We write $A\ind[M]_CB$ if and only if for any $C'\in [C,\acl(BC)]$, we have $\acl(AC')\cap \acl(BC')=\acl(C')$.
\item We write $A\thind_CB$ if and only if for any $B'\supseteq B$, there is $A'\equiv_{BC}A$ with $A'\ind[M]_CB'$.
\end{enumerate}
\end{df}

For the sake of the reader who has not seen the above definitions, let us take a moment to motivate them.  One of the most natural ternary relations amongst small subsets of the monster model is the relation of ``algebraic independence,'' namely $A$ is algebraically independent from $B$ over $C$ if $\acl(AC)\cap \acl(BC)= \acl(C)$.  This relation is not always an independence relation as it may fail to satisfy base monotonicity.  The relation $\ind[M] \ $ is an attempt to force base monotonicity to hold.  However, $\ind[M] \ $ may fail to satisfy extension, that is, nonforking extensions to supersets may not exist.  Thus, $\thind$ is introduced in order to force extension to hold.

In \cite{Adler}, Adler shows that the relation $\ind[M]\ $ (for classical theories) satisfies:  invariance, monotonicity, base monotonicity, transitivity, normality, and anti-reflexivity.  These properties persist for $\ind[M] \ $ in continuous logic.  In classical logic, $\ind[M]\ $ also satisfies finite character, whereas in continuous logic, $\ind[M]\ $ satisfies \emph{countable character}:  If $A_0\ind[M]_CB$ for every countable $A_0\subseteq A$, then $A\ind[M]_CB$.  The proof of this is the same as in \cite{Adler}, using the fact that if $b\in \acl(A)$, then there is a countable $A_0\subseteq A$ such that $b\in \acl(A_0)$.  Adler shows that $\thind$ satisfies invariance, monotonicity, base monotonicity, transitivity, normality, and anti-reflexivity; these properties remain true for $\thind$ in continuous logic.  In \cite{Adler}, it is also shown that $\thind$ has finite character provided $\ind[M]\ $ has finite character and $\thind$ has local character.  Using Morley sequences indexed by $\omega_1$ instead of $\omega$, Adler's arguments show that, in continuous logic, $\thind$ satisfies countable character provided $\ind[M]\ $ satisfies countable character and $\thind$ satisfies local character.

In Remark 4.1 of \cite{Adler}, it is shown that, in classical logic, if $\ind$ is any strict independence relation, then $\ind \Rightarrow \thind$.  This proof does not use finite character and so remains true in continuous logic.  Let us summarize this discussion with the following theorem, where a \emph{countable independence relation} is an independence relation satisfying countable character instead of finite character.

\begin{thm}
The relation $\thind$ is a strict countable independence relation if and only if it has local character if and only if there is a strict countable independence relation at all.  If $\thind$ is a strict countable independence relation, then it is the weakest.
\end{thm}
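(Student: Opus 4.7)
The plan is to establish the four implications $(a)\Rightarrow(b)$, $(a)\Rightarrow(c)$, $(c)\Rightarrow(b)$, and $(b)\Rightarrow(a)$, where $(a)$ says that $\thind$ is a strict countable independence relation, $(b)$ says that $\thind$ has local character, and $(c)$ says that there exists some strict countable independence relation. The first two are immediate, since local character is one of the axioms of a strict countable independence relation and $\thind$ itself witnesses $(c)$ under $(a)$. For $(c)\Rightarrow(b)$, I would invoke the version of Adler's Remark 4.1 mentioned in the discussion above: any strict countable independence relation $\ind$ satisfies $\ind\Rightarrow\thind$. If $\ind$ has local character, then for any $A,B$ there is a small $C_0\subseteq B$ with $A\ind_{C_0}B$, whence $A\thind_{C_0}B$, which gives local character for $\thind$.

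The substantive implication is $(b)\Rightarrow(a)$. I would verify the axioms of a strict countable independence relation for $\thind$ one at a time. Invariance, monotonicity, base monotonicity, transitivity, normality, and anti-reflexivity are already noted in the preceding discussion as consequences of Adler's arguments in the continuous setting. Extension is built into the definition of $\thind$. Local character is the standing hypothesis. Countable character follows from the fact recalled above that $\thind$ inherits countable character from $\ind[M]$ whenever $\thind$ has local character, together with the fact that $\ind[M]$ itself has countable character in continuous logic. This leaves symmetry, which classically is deduced from the other axioms by a Morley sequence argument; I would adapt this to the continuous setting by working with Morley sequences of order type $\omega_1$ in place of $\omega$, exactly as was done for countable character.

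The main obstacle is symmetry: every other axiom is either the hypothesis or has been stated in the discussion above, so the only real work is to transfer the classical Morley sequence derivation of symmetry into the continuous setting while keeping the cardinality bookkeeping honest. Finally, the ``weakest'' clause falls out of the same adaptation of Adler's Remark 4.1: for any strict countable independence relation $\ind$, the implication $\ind\Rightarrow\thind$ is precisely the statement that $\thind$ is the weakest such relation.
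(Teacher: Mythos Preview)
Your proposal is correct and matches the paper's approach: the paper presents this theorem explicitly as a summary of the preceding discussion (``Let us summarize this discussion with the following theorem'') and gives no separate proof, so your organization of that discussion into the implications $(a)\Leftrightarrow(b)\Leftrightarrow(c)$ together with the ``weakest'' clause via Adler's Remark~4.1 is exactly what is intended. Your explicit attention to symmetry is in fact more careful than the paper, which does not single it out; note, though, that in Adler's framework symmetry is not an axiom but a consequence (his Theorem~2.5) of the other axioms, so once you have invariance, monotonicity, base monotonicity, transitivity, normality, extension, local character, and countable character, symmetry follows---the $\omega_1$-indexed Morley sequence adaptation you mention handles the passage from finite to countable character there as well.
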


\begin{df}
$T$ is said to be \textbf{rosy} if $\thind$ satisfies local character.  $T$ is said to be \textbf{real rosy} (resp. \textbf{rosy with respect to finitary imaginaries}) if $\thind$ satisfies local character when restricted to the real sorts (resp. sorts of finitary imaginaries). 
\end{df}

\begin{cor}
Simple continuous theories are rosy.
\end{cor}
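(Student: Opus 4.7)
The plan is to invoke the theorem immediately preceding the corollary, which asserts that $T$ is rosy as soon as \emph{some} strict countable independence relation on $T^{\eq}$ exists. It therefore suffices, when $T$ is simple, to exhibit one, and the natural candidate is the relation $\ind[f]$ of nonforking (equivalently, nondividing) independence developed for simple continuous theories---and more generally for simple compact abstract theories---by Ben Yaacov in \cite{B4}.

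My first step is to recall the axiomatic picture from \cite{B4}: in a simple continuous theory, $\ind[f]$ satisfies the usual Kim--Pillay style package of invariance, symmetry, monotonicity, base monotonicity, transitivity, extension, local character, and the independence theorem over models, and it passes to $T^{\eq}$ in the standard way. From this package one reads off each clause of Adler's definition of a strict independence relation: invariance, monotonicity, base monotonicity, transitivity, normality, extension, local character, and anti-reflexivity (the last because $a \ind[f]_C a$ forces $a \in \acl(C)$).

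The one clause needing a moment's care is the character axiom: classically, Kim--Pillay gives finite character, whereas in continuous logic both algebraic closure and forking naturally have countable character rather than finite character. This is exactly the relaxation from finite to countable character already built into the preceding theorem, so nothing is lost, and $\ind[f]$ qualifies as a strict countable independence relation on $T^{\eq}$.

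Applying the preceding theorem then yields that $\thind$ has local character on $T^{\eq}$, which by definition says that $T$ is rosy. The main obstacle, such as it is, is simply the bookkeeping of translating Ben Yaacov's axiomatization of simple independence in \cite{B4} into Adler's precise list of axioms for a strict countable independence relation; no new model-theoretic content beyond that translation is required.
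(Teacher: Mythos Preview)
Your proposal is correct and is essentially the same argument as the paper's: the paper's proof is the single sentence ``If $T$ is simple, then dividing independence is a strict independence relation for $T^{\eq}$,'' which is exactly your strategy of applying the preceding theorem with the Kim--Pillay independence relation as the witnessing strict (countable) independence relation. Your additional bookkeeping about countable versus finite character and the translation from Ben Yaacov's axioms to Adler's list is accurate and simply spells out what the paper leaves implicit.
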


\begin{proof}
If $T$ is simple, then dividing independence is a strict independence relation for $T^{\eq}$.
\end{proof}

\begin{cor}
If $T$ is a classical theory viewed as a continuous theory, then $T$ is rosy as a classical theory if and only if $T$ is rosy with respect to finitary imaginaries as a continuous theory.
\end{cor}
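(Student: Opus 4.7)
The strategy is to identify classical thorn-independence on $T^{\eq}$ with the restriction of continuous thorn-independence on $T^{\feq}$ to the classical imaginary sorts.

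First I would observe that classical imaginary sorts sit naturally inside the continuous finitary imaginary sorts: given a classical $\emptyset$-definable equivalence relation $E$ on $\m^n$ with $n$ finite, the $\{0,1\}$-valued finitary definable predicate $\varphi(x,y):=E(x,y)$ satisfies $d_\varphi(a,a')=0$ iff $aEa'$, so $S_\varphi = \m^n/E$. Moreover, since classical imaginary sorts carry the discrete metric, compact orbits under $\Aut(\m)$ on such sorts are exactly finite orbits, so classical and continuous algebraic closure agree on classical imaginary sorts. Unwinding the definitions of $\ind[M]$ and $\thind$ in both settings then shows that classical $\thind$ on $\m^{\eq}$ coincides with the restriction to $\m^{\eq}$ of continuous $\thind$ on $\m^{\feq}$.

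The direction ($\Leftarrow$) then follows at once: local character of continuous $\thind$ on $\m^{\feq}$ specializes to local character on $\m^{\eq}$, which is classical rosiness. For ($\Rightarrow$), given small $A, B\subseteq \m^{\feq}$, the plan is to lift each element to a representing finite tuple in $\m$, obtaining $A'', B''\subseteq \m$ with $A\subseteq \dcl(A'')$ and $B \subseteq \dcl(B'')$; classical rosiness yields small $C''\subseteq B''$ with $A''\thind_{C''} B''$, and I would take $C \subseteq B$ to consist of those elements of $B$ whose representing tuples appear in $C''$.

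The main obstacle is verifying that this transfer yields $A\thind_C B$ in the continuous sense on $\m^{\feq}$. The difficulty is that $\m^{\feq}$ contains finitary imaginary sorts $S_\varphi$ arising from genuinely $[0,1]$-valued definable predicates $\varphi$, and such sorts could in principle refine the $\ind[M]$ relation beyond what is visible in $\m^{\eq}$. The resolution I would pursue is to show that for classical $T$ every continuous finitary imaginary is interalgebraic in $\m^{\feq}$ with a classical imaginary tuple, so that the $\ind[M]$ computation in $T^{\feq}$ reduces to one entirely inside $\m^{\eq}$. This should follow from the fact that for classical $T$ the values $\varphi(x,a)$ of a finitary definable predicate are governed by the classical type of $a$, combined with the finite character of algebraic closure in the classical setting.
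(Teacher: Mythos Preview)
The paper's proof is a single sentence asserting that $\m^{\eq}$ (classical) and $\m^{\feq}$ (continuous) are the same structure with the same algebraic closure, from which the corollary is immediate. Your approach is an attempt to unpack and justify that assertion, and your ($\Rightarrow$) direction in particular reproduces in miniature the strategy of Lemmas~\ref{L:reduction}--\ref{L:imagreal} (lift to real representatives, use rosiness there, push back down). So the two approaches are the same in spirit; yours is simply more explicit about what the identification entails.

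You are right to flag the ``main obstacle'': $\m^{\feq}$ does contain sorts $S_\varphi$ for genuinely $[0,1]$-valued finitary predicates $\varphi$, and these need not literally be classical imaginary sorts. One structural issue in your write-up is that this obstacle already affects your claim, used for ($\Leftarrow$), that classical $\thind$ on $\m^{\eq}$ coincides with the restriction of continuous $\thind$ from $\m^{\feq}$: the continuous $\ind[M]$ relation lets the intermediate set $C'$ range over $\acl^{\feq}(BC)$, which may contain such non-classical imaginaries, so you cannot deduce the coincidence from the earlier observations alone. The obstacle must therefore be resolved up front, not only in the ($\Rightarrow$) direction. As for the resolution you sketch, note that for classical $T$ the relation $d_\varphi=0$ is merely \emph{type}-definable in the classical sense, so the canonical parameter is a priori a hyperimaginary rather than a classical imaginary; asserting interalgebraicity with a finite classical imaginary tuple is essentially elimination of hyperimaginaries and is not automatic. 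A safer route is to show that each such $e\in\m^{\feq}$ is inter\emph{definable} with a countable tuple from $\m^{\eq}$ (write $d_\varphi=0$ as an intersection of $\emptyset$-definable equivalence relations and take the corresponding sequence of classical canonical parameters), and then check that this, together with $\Aut(\m^{\feq})=\Aut(\m^{\eq})$ and the discreteness of the classical sorts, forces $\acl^{\feq}$ and $\acl^{\eq}$---and hence the two versions of $\ind[M]$ and $\thind$---to agree.
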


\begin{proof}
This follows from the fact that $\m^{\eq}$ (in the classical sense) is the same as $\m^{\feq}$ and, for $A\subseteq \m^{\feq}$, the algebraic closure of $A$ is the same in either structure.  
\end{proof}

We next seek to provide a formula definition for thorn-independence.  First, we will need some definitions.

\begin{df}
Suppose that $B$ is a small subset of $\m^{\eq}$ and $c$ is a countable tuple from $\m^{\eq}$.
\begin{enumerate}
\item We let $\i(c/B)$ denote the set of $B$-indiscernible sequences of realizations of $\tp(c/B)$.
\item If $I\in \i(c/B)$, let $d(I):=d(c',c'')$ for any $c',c''\in I$.
\item We let $\chi(c/B):=\max\{ d(I) \ | \ I \in \i(c/B)\}$.
\end{enumerate}
\end{df}

\begin{rmks}
Suppose that $B$ and $D$ are small subsets of $\m^{\eq}$ and $c$ is a countable tuple from $\m^{\eq}$. 
\begin{enumerate}
\item If $B\subseteq D$, then $\chi(c/D)\leq \chi(c/B)$.
\item Lemma 4.9 in \cite{BU} shows that $\chi(c/B)=0$ if and only if $\tp(c/B)$ is algebraic.
\item Since the metric on countably infinite tuples is sensitive to the enumeration of the tuple, it is possible that if $c$ is countably infinite and $c'$ is a rearrangement of $c$, then $\chi(c/B)$ may not equal $\chi(c'/B)$.  However, $\chi(c/B)=0$ if and only if $\chi(c'/B)=0$ as a tuple is algebraic over $B$ if and only if each component of the tuple is algebraic over $B$.
\end{enumerate}
\end{rmks}

\begin{df}\label{D:thind}

Suppose $\varphi(x,y)$ is a formula, $\epsilon >0$, $c$ is a countable tuple from $\m^{\eq}$, and $B$ is a small subset of $\m^{\eq}$.

\begin{enumerate}
\item We say that $\varphi(x,c)$ \textbf{strongly $\epsilon$-$k$-divides over $B$} if:
\begin{itemize}
\item $\epsilon\leq \chi(c/B)$, and 
\item whenever $c_1,\ldots,c_k\models \tp(c/B)$ satisfy $d(c_i,c_j)\geq \epsilon$ for all \\$1\leq i<j\leq k$, we have $$\models \inf_x \max_{1\leq i\leq k}\varphi(x,c_i)=1.$$
\end{itemize}
\item We say that $\varphi(x,c)$ \textbf{strongly $\epsilon$-divides over $B$} if it strongly $\epsilon$-$k$-divides over $B$ for some $k\geq 1$.
\item We say that $\varphi(x,c)$ \textbf{strongly $\epsilon$-$k$-divides over $B$ in the na\"ive sense} if:
\begin{itemize}
\item $\epsilon\leq \chi(c/B)$, and 
\item whenever $c_1,\cdots,c_k\models \tp(c/B)$ satisfy $d(c_i,c_j)\geq \epsilon$ for all \\$1\leq i<j\leq k$, we have, for every $a\in \m_x$, that $$\max_{1\leq i\leq k}\varphi(a,c_i)>0.$$   
\end{itemize} 
We say that $\varphi(x,c)$ \textbf{strongly $\epsilon$-divides over $B$ in the na\"ive sense} if it strongly $\epsilon$-$k$-divides over $B$ in the na\"ive sense for some $k\geq 1$.
\end{enumerate}
\end{df}

Using our conventions from above, when saying that $\varphi(x,c,d)$ strongly $\epsilon$-divides over $B$, we only consider $B$-conjugates of $c$ which are $\epsilon$-apart; $d$ must remain fixed.  The next proposition is the key link between the geometric and formula definitions of thorn-independence.

\begin{prop}\label{L:aclstrongdivide}
Let $A$ and $C$ be small parametersets from $\m^{\eq}$ and let $b$ be a countable tuple from $\m^{\eq}$.  Then the following are equivalent:
\begin{enumerate}
\item $b\in \acl(AC)\setminus \acl(C)$;
\item $b\notin \acl(C)$ and for every $\epsilon $ with $0<\epsilon\leq \chi(b/C)$, there is a formula $\varphi_\epsilon(x,b)$ such that the condition ``$\varphi_\epsilon(x,b)=0$'' is in $\tp(A/bC)$ and such that $\varphi_\epsilon(x,b)$ strongly $\epsilon$-divides over $C$  in the na\"ive sense.
\item $b\notin \acl(C)$ and for every $\epsilon $ with $0<\epsilon\leq \chi(b/C)$, there is a formula $\varphi_\epsilon(x,b)$ such that the condition ``$\varphi_\epsilon(x,b)$'' is in $\tp(A/bC)$ and such that $\varphi_\epsilon(x,b)$ strongly $\epsilon$-divides over $C$.
\end{enumerate}
\end{prop}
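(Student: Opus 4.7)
The plan is to establish the cycle $(3)\Rightarrow(2)\Rightarrow(1)\Rightarrow(3)$. The first implication is immediate, since $\inf_x\max_i\varphi(x,c_i)=1$ trivially forces $\max_i\varphi(a,c_i)=1>0$ for every $a\in\m_x$. For $(2)\Rightarrow(1)$ I argue by contradiction: if $b\notin\acl(AC)$, then Lemma~4.9 of~\cite{BU} gives $\chi(b/AC)>0$, so I fix $\epsilon$ with $0<\epsilon\leq\chi(b/AC)\leq\chi(b/C)$ and use the definition of $\chi$ to extract an $AC$-indiscernible sequence $(b_i)_{i<\omega}$ of realizations of $\tp(b/AC)$ with $d(b_i,b_j)\geq\epsilon$ for $i\neq j$. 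Hypothesis~$(2)$ supplies $\varphi_\epsilon(x,b)\in\tp(A/bC)$ strongly $\epsilon$-$k$-dividing naively over $C$; but since each $b_i\equiv_{AC}b$, we have $\varphi_\epsilon(A,b_i)=0$ for $i\leq k$, hence $\max_i\varphi_\epsilon(A,b_i)=0$, contradicting naive strong dividing.

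The real content is $(1)\Rightarrow(3)$. Assume $b\in\acl(AC)\setminus\acl(C)$ and fix $\epsilon\in(0,\chi(b/C)]$. My plan is to apply compactness to the partial type
\[
\Sigma(x,y_1,y_2,\ldots):=\bigcup_{i<\omega}\bigl\{\psi(x,y_i,c)=0:\psi(A,b,c)=0,\,c\in C\bigr\}\,\cup\,\{d(y_i,y_j)\geq\epsilon:i\neq j\},
\]
whose $i$-th block expresses $(x,y_i)\equiv_C(A,b)$. If $\Sigma$ were realized by $(a,(b_i))$, then for each $i$ an automorphism $\sigma_i\in\Aut(\m/C)$ with $\sigma_i(A,b)=(a,b_i)$ would show that all of the $b_i$'s realize the common translated type $\tau_a(y):=\{\psi(y,a,c)=0:\psi(b,A,c)=0\}$ over $aC$; since $\tau_a$ is $\sigma_i$-conjugate to $\tp(b/AC)$, and the latter has compact realization set (by $b\in\acl(AC)$ and Lemma~4.9 of~\cite{BU}), the pairwise $\epsilon$-apart infinite sequence $(b_i)$ contradicts this compactness. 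Hence $\Sigma$ is inconsistent, and compactness in continuous logic then yields a finite $I\subseteq\omega$, finitely many predicates $\psi_1,\ldots,\psi_r$ with $\psi_j(A,b,c)=0$, and some $\delta_0>0$ witnessing $\delta_0$-unsatisfiability of the corresponding fragment.

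Setting $\theta(x,y,c):=\max_j\psi_j(x,y,c)$, we have $\theta(A,b,c)=0$, and the $\delta_0$-unsatisfiability translates, when the $b_i$ are taken to be exact pairwise $\epsilon$-apart realizations of $\tp(b/C)$, into $\max_i\theta(a,b_i,c)>\delta_0$ for every $a\in\m_x$. This already yields~$(2)$ with $k=|I|$. To upgrade to~$(3)$, I set $\varphi_\epsilon(x,y,c):=\min(\theta(x,y,c)/\delta_0,1)$: then $\varphi_\epsilon(A,b,c)=0$ and, for every $x$ and every good tuple $(b_1,\ldots,b_k)$, some $\theta(x,b_i,c)>\delta_0$ forces $\varphi_\epsilon(x,b_i,c)=1$, so $\inf_x\max_i\varphi_\epsilon(x,b_i,c)=1$. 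The main obstacle I anticipate is the automorphism argument proving $\Sigma$ inconsistent: the whole point of placing the full pair-type $\tp(A,b/C)$---rather than merely $\tp(b/C)$---into each copy of $\Sigma$ is to transfer algebraicity from $\tp(b/AC)$ to the translate $\tau_a$ via $\sigma_i$, which is precisely what ensures the resulting formula witnesses naive strong dividing uniformly in $a\in\m_x$ rather than merely at $a=A$.
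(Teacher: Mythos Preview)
Your proof is correct and uses the same key ideas as the paper—the partial type $\Sigma$ expressing ``infinitely many $\epsilon$-apart copies of $(A,b)$ over $C$'' and the compactness extraction of a uniform bound—though you run the cycle in the opposite direction: the paper proves $(1)\Rightarrow(2)\Rightarrow(3)\Rightarrow(1)$, obtaining $(1)\Rightarrow(2)$ as the contrapositive of the consistency of your $\Sigma$ and then upgrading to $(3)$ by a separate compactness step, while you go directly $(1)\Rightarrow(3)$ by extracting both the formula $\theta$ and the bound $\delta_0$ from the inconsistency of $\Sigma$ in one pass. Your $(2)\Rightarrow(1)$ via an $AC$-indiscernible $\epsilon$-apart sequence is essentially the paper's $(3)\Rightarrow(1)$, which reaches the same contradiction by invoking failure of total boundedness of the set of realizations of $\tp(b/AC)$.
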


\begin{proof}
$(1)\Rightarrow (2)$:  Suppose that (2) fails.  If $b\in \acl(C)$, then (1) fails.  Assume that $b\notin \acl(C)$.  We aim to show that $b\notin \acl(AC)$.  We argue as in the proof of Lemma 2.1.3(4) in \cite{Alf}.  By assumption, there is $\epsilon$ with $0<\epsilon \leq \chi(b/C)$ such that $\varphi(x,b)$ doesn't strongly $\epsilon$-divide over $C$ in the na\"ive sense for any formula $\varphi(x,b)$ such that the condition ``$\varphi(x,b)=0$'' is in $\tp(A/bC)$.  Let $p(X,y):=\tp(A,b/C)$ and $q(y):=\tp(b/C)$.

\noindent \textbf{Claim:}  The set of $(L^{\eq})_\omega$-conditions $$\Gamma(X,(y_i)_{i<\omega}):=\bigcup_{i<\omega}p(X,y_i)\cup \bigcup_{i<\omega} q(y_i) \cup \{d(y_i,y_j)\geq \epsilon \ | \ i<j<\omega\}$$ is satisfiable.

It is enough to prove that, for any $\varphi(x,y)$ for which the condition ``$\varphi(x,b)=0$'' is in $p(X,b)$ and any $n<\omega$, we have $$\{\max_{i\leq n}\varphi(x,y_i)=0\}\cup \bigcup_{i\leq n}q(y_i) \cup \{d(y_i,y_j)\geq \epsilon \ | \ i<j\leq n\}$$ is satisfiable.  Since $b\notin \acl(C)$ and $\varphi(x,b)$ doesn't strongly $\epsilon$-divide over $C$ in the na\"ive sense, we have $b_0,\ldots,b_n\models q$ such that $d(b_i,b_j)\geq \epsilon$ for all $i<j\leq n$ and $\max_{i\leq n}\varphi(c,b_i)=0$ for some $c$.  This finishes the proof of the claim.

Let $(A,(b_i)_{i<\omega})$ realize $\Gamma(X,(y_i)_{i<\omega})$.  Since $A'b_0\equiv_C Ab$, we may assume $A'b_0=Ab$.  It then follows that $b'\models \tp(b/AC)$ for all $i<\omega$.  Since $(b_i)_{i<\omega}$ can contain no convergent subsequence, the set of realizations of $\tp(b/AC)$ in $\m^{\eq}$ cannot be compact, whence $b\notin \acl(AC)$.

$(2)\Rightarrow (3)$:  Suppose (2) holds and fix $\epsilon$ with $0<\epsilon\leq \chi(b/C)$.  Suppose $\varphi_\epsilon(x,b)$ is a formula such that the condition ``$\varphi_\epsilon(x,b)=0$'' is in $\tp(A/bC)$ and such that $\varphi_\epsilon(x,b)$ strongly $\epsilon$-$k$-divides over $C$ in the na\"ive sense.  By compactness, we can find $r\in (0,1]$ such that $\inf_x\max_{i<k}\varphi_\epsilon(x,b_i)\geq c$ for all $b_0,\ldots,b_{k-1}\models \tp(b/C)$ with $d(b_i,b_j)\geq \epsilon$ for all $i<j<k$.  Let $\varphi_{\epsilon}':=\frac{1}{r}\odot \varphi_\epsilon$.  Then the condition ``$\varphi_\epsilon'(x,b)=0$'' is in $\tp(A/bC)$ and $\varphi_\epsilon'(x,b)$ strongly $\epsilon$-$k$-divides over $C$.

$(3)\Rightarrow (1)$:  Suppose that (3) holds and yet $b\notin \acl(AC)$, i.e. the set $X$ of realizations of $\tp(b/AC)$ in $\m^{\eq}$ is not compact.  Note that $X$ is closed, and hence complete.  It follows that $X$ is not totally bounded, i.e. there is $\epsilon>0$ such that $X$ cannot be covered by finitely many balls of radius $\epsilon$.  Without loss of generality, we may assume that $\epsilon\leq \chi(b/C)$.  Let $\varphi_\epsilon(x,b)$ be such that the condition ``$\varphi_\epsilon(x,b)=0$'' is in $\tp(A/bC)$ and such that $\varphi_\epsilon(x,b)$ strongly $\epsilon$-$k$-divides over $C$.  Choose $b_1,\ldots,b_k\in X$ with $d(b_i,b_j)\geq \epsilon$.  Then $\varphi_\epsilon(A,b_i)=0$ for each $i\in \{1,\ldots,k\}$, contradicting strong $\epsilon$-$k$-dividing.
\end{proof}

Motivated by the above proposition, we make the following definitions.

\begin{df}
Let $A,B,C$ be small subsets of $\m^{\eq}$.
\begin{enumerate}
\item If $b$ is a countable tuple, then $\tp(A/bC)$ \textbf{strongly divides over $C$} if $b\in \acl(AC)\setminus \acl(C)$.
\item $\tp(A/BC)$ \textbf{strongly divides over $C$} if $\tp(A/bC)$ strongly divides over $C$ for some countable $b\subseteq B$.
\item If $b$ is a countable tuple, then $\tp(A/bC)$ \textbf{thorn-divides over $C$} if there is a $D\supseteq C$ such that $b\notin \acl(D)$ and such that, for every $\epsilon $ with $0<\epsilon\leq \chi(b/D)$, there is a formula $\varphi_\epsilon(x,b)$ such that the condition ``$\varphi_\epsilon(x,b)=0$'' is in $\tp(A/bC)$ and $\varphi_\epsilon(x,b)$ strongly $\epsilon$-divides over $D$. 
\item $\tp(A/BC)$ \textbf{thorn-divides over $C$} if $\tp(A/bC)$ thorn-divides over $C$ for some countable $b\subseteq B$.
\item $\tp(A/BC)$ \textbf{thorn-forks over $C$} if there is $E\supseteq BC$ such that every extension of $\tp(A/BC)$ to $E$ thorn-divides over $C$.  
\end{enumerate}
\end{df}

The following is the continuous analog of Theorem 3.3 in \cite{Alfclif}.
\begin{thm}\label{T:weakest}
Suppose $\ind[I]$ is an automorphism-invariant ternary relation on small subsets of $\m^{\eq}$ satisfying, for all small $A,B,C,D$:
\begin{enumerate}
\item for all countable $b$, if $b\in \acl(AC)\setminus \acl(C)$, then $A\nind[I]_Cb$;
\item if $A\ind[I]_B D$ and $B\subseteq C \subseteq D$, then $A\ind[I]_C D$ and $A\ind[I]_BC$;
\item if $A\ind[I]_C B$ and $BC\subseteq D$, then there is $A'\equiv_{BC}A $ such that $A'\ind[I]_CD$.
\item if $A\ind[I]_CBC$, then $A\ind[I]_CB$.
\end{enumerate}
Then for all $A,B,C$, if $A\ind[I]_CB$, then $\tp(A/BC)$ does not thorn-fork over $C$.
\end{thm}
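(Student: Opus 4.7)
The plan is to mimic the classical argument for Adler's version of this result, translated into the continuous setting, with the key tool being Proposition~\ref{L:aclstrongdivide} which turns the formula-style definition of thorn-dividing into a statement about algebraic closure.

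I would argue by contradiction. Suppose $A\ind[I]_CB$ but $\tp(A/BC)$ thorn-forks over $C$. By definition, there exists $E\supseteq BC$ such that \emph{every} extension of $\tp(A/BC)$ to $E$ thorn-divides over $C$. First I would apply the extension axiom~(3) to $A\ind[I]_CB$ with $BC\subseteq E$ to obtain $A'\equiv_{BC}A$ with $A'\ind[I]_C E$. Since $\tp(A'/E)$ extends $\tp(A/BC)$, it thorn-divides over $C$, so there is a countable $b\subseteq E$ such that $\tp(A'/bC)$ thorn-divides over $C$. Unpacking the definition, there is $D\supseteq C$ with $b\notin\acl(D)$ and, for every $\epsilon$ with $0<\epsilon\leq \chi(b/D)$, a formula $\varphi_\epsilon(x,b)$ such that ``$\varphi_\epsilon(x,b)=0$'' lies in $\tp(A'/bC)$ and $\varphi_\epsilon(x,b)$ strongly $\epsilon$-divides over $D$. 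By Proposition~\ref{L:aclstrongdivide}, this yields $b\in\acl(A'D)\setminus\acl(D)$.

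Next I would invoke the extension axiom~(3) a second time, now with base $C$, side $E$, and enlarging to $ED$, to produce $A'''\equiv_E A'$ with $A'''\ind[I]_C ED$. Since $bC\subseteq E$, we have $A'''\equiv_{bC}A'$, and the same formulas $\varphi_\epsilon(x,b)$ witness that $\tp(A'''/bC)$ thorn-divides over $C$ via the same $D$. Hence $b\in\acl(A'''D)\setminus\acl(D)$, and by hypothesis~(1) we conclude $A'''\nind[I]_D b$.

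The contradiction comes from showing the opposite, $A'''\ind[I]_D b$, using base monotonicity and property~(4). Starting from $A'''\ind[I]_C ED$, property~(2) with the chain $C\subseteq D\subseteq ED$ gives $A'''\ind[I]_D ED$; applying~(2) again with $D\subseteq bD\subseteq ED$ yields $A'''\ind[I]_D bD$; and finally property~(4) produces $A'''\ind[I]_D b$, contradicting the previous paragraph.

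The core of the proof is really just bookkeeping with the four axioms once Proposition~\ref{L:aclstrongdivide} is in hand, so I do not anticipate a serious obstacle. The one point worth being careful about is the second appeal to extension: it is essential to extend $A'$'s type over \emph{all} of $E$ (not merely over $BC$) before bringing $D$ into play, so that the thorn-dividing data transports from $A'$ to $A'''$; the step $A'''\equiv_{bC}A'$ is what licenses re-using the same witnessing formulas. The use of property~(4) at the end is what converts the a priori conclusion $A'''\ind[I]_D bD$ (which is all base monotonicity directly yields) into the desired $A'''\ind[I]_D b$.
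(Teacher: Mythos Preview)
Your argument is correct. The overall strategy matches the paper's: reduce to a situation where Proposition~\ref{L:aclstrongdivide} applies, obtain $b\in\acl(A^{*}D)\setminus\acl(D)$ for an appropriate conjugate $A^{*}$ of $A$, and then derive a contradiction from axioms (1), (2), and (4).

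There is one organizational difference worth noting. The paper first isolates the lemma ``thorn-dividing implies $I$-dependence'' and then deduces the thorn-forking statement from it via a single use of extension. In that lemma, rather than passing to a further conjugate of $A$, the paper keeps $A$ fixed and instead moves $D$: it applies extension to obtain $a'\equiv_{bC}a$ with $a'\ind[I]_C bD$, then uses an automorphism over $bC$ sending $a'\mapsto a$ to replace $D$ by $\sigma(D)$, observing that $\chi(b/\sigma(D))=\chi(b/D)$ so the strong-dividing witnesses survive. You instead leave $D$ alone and apply extension a second time (over $E$, enlarging to $ED$) to produce $A'''$. Both maneuvers accomplish the same alignment of the base with the dividing witness; yours is arguably a bit cleaner since it avoids the automorphism bookkeeping, at the cost of merging the dividing and forking steps into a single argument rather than isolating a reusable intermediate lemma.
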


\begin{proof}
Let us first show that thorn-dividing implies $I$-dependence.  Suppose that $\tp(A/BC)$ thorn-divides over $C$ but, towards a contradiction, $A\ind[I]_CB$.  Let $b\subseteq B$ be countable so that $\tp(A/bC)$ thorn-divides over $C$.  We then have $D\supseteq C$ such that $b\notin \acl(D)$ and for every $\epsilon \in (0,\chi(b/D)]$, there is a formula $\varphi_\epsilon(x,b)$ such that the condition ``$\varphi_\epsilon(x,b)=0$'' is in $\tp(A/bC)$ and $\varphi_\epsilon(x,b)$ strongly $\epsilon$-divides over $D$.

\noindent \textbf{Claim:}  $D$ can be chosen so that $A\ind[I]_CbD$.

By (3), we have $A\ind[I]_CBC$.  By (2), we have $A\ind[I]_CbC$.  By (4), we have $A\ind[I]_Cb$.  By (2), there is $a'\equiv_{bC}a$ such that $a'\ind[I]_CbD$.  Take $\sigma\in \Aut(\m^{\eq}|bC)$ such that $\sigma(a')=a$.  Since $a\ind[I]_Cb\sigma(D)$, $\sigma(D)\supseteq C$, and $\chi(b/\sigma(D))=\chi(b/D)$, we have $\varphi_\epsilon(x,b)$ still strongly $\epsilon$-divides over $\sigma(D)$.  This finishes the proof of the claim.

By the Claim and (2), we have $A\ind[I]_DbD$, and by (4), we have $A\ind[I]_Db$.  However, by Proposition \ref{L:aclstrongdivide}, we have $b\in \acl(AD)\setminus \acl(D)$, so by (1), we have $A\nind[I]_Db$, a contradiction.

Now suppose that $A\ind[I]_CB$.  We wish to show that $\tp(A/BC)$ does not thorn-fork over $C$.  Fix $E\supseteq BC$.  By (2), we have $A'\equiv_{BC}A$ such that $A'\ind[I]_CE$.  By the first part of the proof, we have $\tp(A'/E)$ does not thorn-divide over $C$.  Since $\tp(A/BC)$ has an extension to every superset of $BC$ which does not thorn-divide over $C$, it follows that $\tp(A/BC)$ does not thorn-fork over $C$.    
\end{proof}

In establishing the equivalence of the geometric and formula definitions of thorn-independence, the following technical lemma will be useful.

\begin{lemma}\label{L:countable}
Suppose that $b$ is countable and $\tp(A/bC)$ thorn-divides over $C$, witnessed by $D\supseteq C$.  Then we can find $D'\in [C,D]$ witnessing that $\tp(A/bC)$ thorn-divides over $C$ and satisfying $|D'\setminus C|\leq \aleph_0$.
\end{lemma}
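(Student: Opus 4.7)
The plan begins with unpacking the hypothesis.  Since $b\notin \acl(D)$, Lemma 4.9 of \cite{BU} gives $\chi(b/D)>0$, so we may fix a sequence $\epsilon_n\in (0,\chi(b/D)]$ with $\epsilon_n\to 0$.  For each $n$, the thorn-dividing hypothesis supplies an integer $k_n\geq 1$ and a formula $\varphi_n(x,b)$, whose parameters (other than $b$) lie in $C$, such that ``$\varphi_n(x,b)=0$'' belongs to $\tp(A/bC)$ and $\varphi_n(x,b)$ strongly $\epsilon_n$-$k_n$-divides over $D$.  The goal is to build a countable $D_0\subseteq D$ such that $D':=C\cup D_0$ witnesses thorn-dividing over $C$.

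The main step is a compactness argument applied to each pair $(n,m)$.  For each $n,m\geq 1$, consider the $(L^{\eq})_\omega$-type on variables $y_1,\ldots,y_{k_n}$:
$$\pi_{n,m}^D \ := \ \bigcup_{i=1}^{k_n} \tp(b/D)(y_i)\ \cup\ \{d(y_i,y_j)\geq \epsilon_n : 1\leq i<j\leq k_n\}\ \cup\ \{\inf_x \max_{1\leq i\leq k_n}\varphi_n(x,y_i)\leq 1-\tfrac{1}{m}\}.$$
Unwinding ``$\inf_x \max \varphi_n=1$'' as ``$\inf_x\max \varphi_n> 1-1/m$ for every $m$'', the strong $\epsilon_n$-$k_n$-dividing of $\varphi_n(x,b)$ over $D$ amounts exactly to the unsatisfiability of $\pi_{n,m}^D$ for every $m$.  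Compactness then produces a finite subtype $\sigma_{n,m}\subseteq \pi_{n,m}^D$ which is already unsatisfiable; let $D_{n,m}\subseteq D$ be the finite set of parameters from $D$ appearing in $\sigma_{n,m}$.  The key observation is that each condition ``$\psi(y_i)=r$'' occurring in $\sigma_{n,m}$ uses parameters only from $D_{n,m}$, and hence also lies in $\tp(b/D_{n,m})\subseteq \tp(b/D'')$ for any $D''\supseteq D_{n,m}$; thus $\sigma_{n,m}\subseteq \pi_{n,m}^{D''}$, and $\pi_{n,m}^{D''}$ is unsatisfiable whenever $D''\supseteq D_{n,m}$.

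Now set $D_0:=\bigcup_{n,m\geq 1}D_{n,m}$, a countable union of finite sets, and take $D':=C\cup D_0\in [C,D]$; then $|D'\setminus C|\leq \aleph_0$.  To verify that $D'$ witnesses thorn-dividing, first note that $b\notin \acl(D')$ by monotonicity of $\acl$ and that $\chi(b/D')\geq \chi(b/D)$ since $D'\subseteq D$.  Given $\epsilon\in (0,\chi(b/D')]$, choose $n$ with $\epsilon_n\leq \epsilon$.  By the preceding paragraph, $\pi_{n,m}^{D'}$ is unsatisfiable for every $m$, which translates back to strong $\epsilon_n$-$k_n$-dividing of $\varphi_n(x,b)$ over $D'$; since $\epsilon_n\leq \epsilon\leq \chi(b/D')$ and the collection of $\epsilon$-apart $k_n$-tuples of $\tp(b/D')$-realizations shrinks as $\epsilon$ grows, this upgrades to strong $\epsilon$-$k_n$-dividing of $\varphi_n(x,b)$ over $D'$, as required.

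The delicate point worth flagging is that passing from $D$ down to $D'\subseteq D$ enlarges the type $\tp(b/\cdot)$ (fewer conditions, hence more realizations), so a priori \emph{more} $k_n$-tuples must be caught by $\varphi_n$ and strong dividing could fail after the shrinking.  The compactness step above is exactly what pinpoints, at each failure level $1/m$ and each scale $\epsilon_n$, a finite subset of parameters from $D$ sufficient to rule out every bad $k_n$-tuple; taking a countable union over the pairs $(n,m)$ keeps $D_0$ countable and resolves the apparent difficulty.
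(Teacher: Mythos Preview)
Your proof is correct and follows essentially the same strategy as the paper: use compactness to extract, for each scale $\epsilon$ in a countable set, a finite subset of parameters from $D$ that already witnesses the strong dividing, then take the union.  The only cosmetic differences are that the paper indexes by rational $\epsilon\in(0,\chi(b/D)]$ rather than a sequence $\epsilon_n\to 0$, and it performs a single compactness step per $\epsilon$ (at the cost of replacing $\varphi_\epsilon$ by $2\odot\varphi_\epsilon$) rather than your countably many steps indexed by $m$.
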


\begin{proof}
Fix $\epsilon \in (0,\chi(b/D)]$.  Choose $\varphi_\epsilon(x,b)$ such that the condition ``$\varphi_\epsilon(x,b)=0$'' is in $\tp(A/bC)$ and $\varphi_\epsilon(x,b)$ strongly $\epsilon$-$k$-divides over $D$ for some $k\geq 1$.  By compactness, there is a finite $d_\epsilon\subseteq D$ and a formula $\psi(y,d_\epsilon)$ such that $\psi(b,d_\epsilon)=0$ and whenever $b_0,\ldots,b_{k-1}$ are such that $\psi(b_i,d_\epsilon)=0$ and $d(b_i,b_j)\geq \epsilon$ for all $i<j<k$, we have $\inf_x \max_{1\leq i \leq k_\epsilon} 2\odot \varphi_\epsilon(x,b_i)=1$.  Let $D':=C\cup \bigcup\{d_\epsilon \ | \ \epsilon \in (0,\chi(c/D)]\cap \mathbb{Q}\}$.  It follows that this $D'$ has the desired property.    
\end{proof}

A version of the following proposition appears in \cite{Adler} for classical theories.

\begin{prop}\label{L:equivalence}
Let $A$ and $C$ be arbitrary small subsets of $\m^{\eq}$.  Let $M$ be a small elementary submodel of $\m^{\eq}$ such that $C\subseteq M$ and $M$ is $(|T|+|C|)^+$-saturated.  Then the following are equivalent:
\begin{enumerate}
\item $A\ind[M]_C M$;
\item for all $C'\in [C,M]$, we have $\acl(AC')\cap M=\acl(C')$;
\item for all $C'\in [C,M]$, we have $\tp(A/M)$ does not strongly divide over $C'$;
\item $\tp(A/M)$ does not thorn-divide over $C$.
\end{enumerate}
\end{prop}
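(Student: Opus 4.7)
The plan is to prove the cycle $(1)\Leftrightarrow (2)\Leftrightarrow (3)\Rightarrow (4)\Rightarrow (1)$, with the first two arrows already being equivalences.

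The equivalence $(1)\Leftrightarrow (2)$ is a matter of unpacking the definition of $\ind[M]$: since $M$ is an elementary submodel of $\m^{\eq}$, one has $\acl(M)=M$, so $[C,\acl(MC)]=[C,M]$ and $\acl(MC')=M$ for every $C'\in [C,M]$, and the defining condition of $\ind[M]$ collapses to (2). The equivalence $(2)\Leftrightarrow(3)$ is then immediate from the definition of strong dividing: $\tp(A/M)$ strongly divides over $C'$ exactly when there exists a countable tuple $b\subseteq M$ with $b\in\acl(AC')\setminus\acl(C')$, and a countable tuple is algebraic iff each of its components is.

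For $(3)\Rightarrow (4)$, I contrapose. Suppose $\tp(A/M)$ thorn-divides over $C$, witnessed by some countable $b\subseteq M$ and some $D\supseteq C$. By Lemma \ref{L:countable} I may arrange $|D\setminus C|\leq\aleph_0$, so $|bCD|\leq |T|+|C|$. The saturation hypothesis on $M$ then lets me realize $\tp(D/bC)$ by some $D'\subseteq M$; picking an automorphism $\sigma\in\Aut(\m^{\eq}/bC)$ sending $D$ to $D'$ and setting $A':=\sigma(A)$, automorphism-invariance of every ingredient in the definition of thorn-dividing yields that $\tp(A/bC)=\tp(A'/bC)$ also thorn-divides over $C$, now witnessed by $D'\subseteq M$. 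Invoking the implication $(3)\Rightarrow(1)$ of Proposition \ref{L:aclstrongdivide} with base $D'$, I conclude $b\in\acl(AD')\setminus\acl(D')$. Since $b\subseteq M$ is countable and $D'\in [C,M]$, this witnesses that $\tp(A/M)$ strongly divides over $D'$, contradicting (3).

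For $(4)\Rightarrow (1)$, again by contraposition, I start from some $C'\in [C,M]$ and some $c\in (\acl(AC')\cap M)\setminus\acl(C')$. Proposition \ref{L:aclstrongdivide} produces, for each $\epsilon$ in a countable dense subset of $(0,\chi(c/C')]$, a formula $\varphi_\epsilon(x,c)$ whose irrelevant parameters lie in $C'$, such that ``$\varphi_\epsilon(x,c)=0$'' is in $\tp(A/cC')$ and $\varphi_\epsilon(x,c)$ strongly $\epsilon$-divides over $C'$. To extract thorn-dividing of $\tp(A/M)$ over $C$ as demanded by the definition, I need the witnessing condition to sit in $\tp(A/bC)$ (parameters from $bC$, not $bC'$); this forces me to absorb the irrelevant parameters into the relevant tuple. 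Concretely, let $\tilde b$ be the countable concatenation of $c$ with the countably many irrelevant parameters appearing in the $\varphi_\epsilon$'s, so $\tilde b\subseteq M$, and rewrite each $\varphi_\epsilon(x,c)$ as a formula $\tilde\varphi_\epsilon(x,\tilde b)$ with no irrelevant parameters. Since the absorbed coordinates lie in $C'$, they are constant on any $C'$-indiscernible sequence realizing $\tp(\tilde b/C')$, so strong $\epsilon$-dividing of $\varphi_\epsilon(x,c)$ over $C'$ transfers directly to strong $\epsilon$-dividing of $\tilde\varphi_\epsilon(x,\tilde b)$ over $C'$. Since $c$ is a coordinate of $\tilde b$ and $c\notin\acl(C')$, also $\tilde b\notin\acl(C')$, so $D:=C'$ witnesses that $\tp(A/\tilde b C)$ thorn-divides over $C$, and since $\tilde b\subseteq M$, so does $\tp(A/M)$.

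The main obstacle is precisely the parameter-absorption step in $(4)\Rightarrow(1)$: the definition of thorn-dividing is sensitive to which parameters vary versus which are held fixed, and the formulas delivered by Proposition \ref{L:aclstrongdivide} naively carry parameters from the larger base $C'$ rather than from $C$. The absorption trick survives only because those extra parameters are algebraic over the chosen $D=C'$ and hence fixed on $D$-indiscernible sequences, which is what lets strong dividing be transferred to the enlarged relevant tuple.
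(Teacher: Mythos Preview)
Your proof is correct and follows the same overall strategy as the paper: the equivalences $(1)\Leftrightarrow(2)\Leftrightarrow(3)$ by unwinding definitions, $(3)\Rightarrow(4)$ via saturation of $M$ to relocate the witnessing set inside $M$, and the reverse direction by absorbing the extra base parameters into the relevant tuple.

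The only noteworthy difference is in how the last implication is executed. The paper proves $(4)\Rightarrow(3)$: starting from a countable tuple $b$ witnessing strong dividing over some $C'\in[C,M]$, it first cuts $C'$ down to a countable $Cd$ (as in Lemma~\ref{L:countable}) and then interleaves $b$ and $d$ as $(b_0,d_0,b_1,d_1,\ldots)$, using the inequality $d(b'd,b''d)\leq d(b',b'')$ to push strong $\epsilon$-dividing from $b$ to $bd$. You instead prove $(4)\Rightarrow(1)$: starting from a \emph{single} element $c$ witnessing failure of $(2)$, you place $c$ as the first coordinate of $\tilde b$ and keep $D=C'$ itself as the witness. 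Because the absorbed coordinates lie in $C'$ and hence are fixed on realizations of $\tp(\tilde b/C')$, the distance between two such realizations is \emph{equal} to the distance between their $c$-coordinates, so the transfer of strong $\epsilon$-dividing is exact rather than via an inequality, and no reduction of $C'$ to a countable set is needed. Your route is marginally cleaner on the metric bookkeeping; the paper's version is more explicit about why the enumeration of the enlarged tuple matters. One small point worth stating explicitly in your write-up: a countable dense set of $\epsilon$'s suffices because strong $\epsilon$-dividing is monotone upward in $\epsilon$ (within $(0,\chi]$).
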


\begin{proof}
$(1)\Leftrightarrow (2)$ is immediate.  

$(2)\Rightarrow (3)$:  Suppose there is $C'\in [C,M]$ such that $\tp(A/M)$ strongly divides over $C'$.  Choose $b\subseteq M$ countable such that $\tp(A/bC')$ strongly divides over $C'$, i.e. $b\in \acl(AC')\setminus \acl(C')$.  Writing $b=(b_i)_{i<\omega}$, by Proposition 2.8(2) of \cite{HenTell}, there is $i<\omega$ such that $b_i\in \acl(AC')\setminus \acl(C')$, contradicting (2).  

$(3)\Rightarrow (2)$:  Suppose there is $C'\in [C,M]$ and $b\in M$ such that $b\in \acl(AC')\setminus \acl(C')$.  Then $\tp(A/bC')$ strongly divides over $C'$, whence $\tp(A/M)$ strongly divides over $C'$.

$(3)\Rightarrow(4)$:  Suppose that $\tp(A/M)$ thorn-divides over $C$.  Choose $b\subseteq M$ countable such that $\tp(A/bC)$ thorn-divides over $C$.  By Lemma \ref{L:countable}, we can find a countable $d\subseteq \m^{\eq}$ such that $b\notin \acl(Cd)$ and for every $\epsilon \in (0,\chi(b/Cd)]$, there is a formula $\varphi_\epsilon(x,b)$ such that the condition ``$\varphi_\epsilon(x,b)=0$'' is in $\tp(A/bC)$ and $\varphi_\epsilon(x,b)$ strongly $\epsilon$-divides over $Cd$.  Let $d'\subseteq M$ be such that $d'\equiv_{bC}d$; this is possible by the saturation assumption on $M$.  Now notice that $\tp(A/bCd')$ strongly divides over $Cd'$, whence $\tp(A/M)$ strongly divides over $Cd'$, contradicting (3).

$(4)\Rightarrow (3)$:  Suppose that there is $C'\in [C,M]$ such that $\tp(A/M)$ strongly divides over $C'$.  Let $b\subseteq M$ be countable such that $\tp(A/bC')$ strongly divides over $C'$.  Arguing as in Lemma \ref{L:countable}, we may find countable $d\subseteq C'$ such that $\tp(A/bCd)$ strongly divides over $Cd$.  We now show that $\tp(A/bdC)$ thorn-divides over $C$, whence $\tp(A/M)$ thorn-divides over $C$, finishing the proof of the proposition.  Since the metric on countably infinite tuples is sensitive to the enumeration of the tuple, we must specify the enumeration of $bd$.  We fix the enumeration $bd=(b_0,d_0,b_1,d_1,\ldots)$.  Notice that if $b'd,b''d\models \tp(bd/Cd)$, then $d(b'd,b''d)\leq d(b',b'')$.  In particular, this shows that $\chi(bd/Cd)\leq \chi(b/Cd)$.  Note also that $bd\notin \acl(Cd)$ as $b\notin \acl(Cd)$.  Fix $\epsilon \in (0,\chi(bd/Cd)]$.  Let $\varphi(x,b)$ be a formula such that the condition ``$\varphi(x,b)=0$'' is in $\tp(A/bCd)$ and such that $\varphi(x,b)$ strongly $\epsilon$-$k$-divides over $Cd$ for some $k\geq 1$; this is possible since $\tp(A/bCd)$ strongly divides over $Cd$.  Now suppose $b_0d,\ldots,b_{k-1}d\models \tp(bd/Cd)$ are such that $d(b_id,b_jd)\geq \epsilon$ for all $i<j<k$.  Then $d(b_i,b_j)\geq \epsilon$ for all $i<j<k$, whence $\inf_x\max_{i<k}\varphi(x,b_i,d)=1$.  Thus, $Cd$ witnesses that $\tp(a/bdC)$ thorn-divides over $C$.    
\end{proof}

\begin{cor}
For all small $A,B,C\subseteq \m^{\eq}$, we have $A\thind_CB$ if and only if $\tp(A/BC)$ does not thorn-fork over $C$. 
\end{cor}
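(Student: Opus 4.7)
The plan is to deduce this corollary directly from Proposition \ref{L:equivalence}, which identifies $\ind[M]$-independence of $A$ from a sufficiently saturated model $M$ with the failure of $\tp(A/M)$ to thorn-divide over $C$. Both $A\thind_C B$ and ``$\tp(A/BC)$ does not thorn-fork over $C$'' have the same outer quantifier pattern (``for every small extension of the right-hand parameter set, there exists\ldots''), so in each direction I would plug a sufficiently saturated model in for the relevant witness and then invoke Proposition \ref{L:equivalence}.

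For the forward direction, I would start from $A \thind_C B$, fix a small $E \supseteq BC$, and choose a small $(|T|+|C|)^+$-saturated elementary submodel $M \preceq \m^{\eq}$ containing $E$. Applying the definition of $\thind$ with $B' := M$ produces $A' \equiv_{BC} A$ with $A' \ind[M]_C M$, and Proposition \ref{L:equivalence} then yields that $\tp(A'/M)$ does not thorn-divide over $C$. Any countable $b \subseteq E$ witnessing thorn-dividing of $\tp(A'/E)$ would also witness it for $\tp(A'/M)$, so $\tp(A'/E)$ is the required non-thorn-dividing extension of $\tp(A/BC)$ to $E$.

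For the reverse direction, I would fix a small $B' \supseteq B$, choose a small $(|T|+|C|)^+$-saturated $M \preceq \m^{\eq}$ containing $B'C$, and apply the non-thorn-forking hypothesis with $E := M$ to obtain an extension $q$ of $\tp(A/BC)$ to $M$ which does not thorn-divide over $C$. Any realization $A'$ of $q$ satisfies $A' \equiv_{BC} A$, and Proposition \ref{L:equivalence} upgrades the non-thorn-dividing condition to $A' \ind[M]_C M$. Monotonicity of $\ind[M]$ in its right argument, already recorded in the paper's summary of the properties $\ind[M]$ inherits from Adler's treatment, then gives $A' \ind[M]_C B'$, and since $B'$ was arbitrary I conclude $A \thind_C B$.

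I do not foresee a real obstacle: the only substantive points are the descent from non-thorn-dividing over $M$ to non-thorn-dividing over $E$ in the forward direction, which is immediate from the ``some countable $b$'' clause in the definition of thorn-dividing, and the monotonicity step in the reverse direction, which is cited directly from the paper's earlier discussion. The choice of $M$ is legitimate in each case because $B'$, $C$, and $E$ are all small, so a small elementary submodel of $\m^{\eq}$ of the required saturation degree containing them exists inside $\m^{\eq}$.
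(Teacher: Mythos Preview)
Your proof is correct and takes essentially the same approach as the paper's: both arguments pass to a small $(|T|+|C|)^+$-saturated model $M$ containing the relevant parameter set, invoke Proposition \ref{L:equivalence} to translate between $\ind[M]_C M$ and non-thorn-dividing of the type over $M$, and use right monotonicity of $\ind[M]$ (or, equivalently, the upward persistence of thorn-dividing along inclusions of parameter sets). The only cosmetic difference is that the paper argues each implication by contrapositive while you argue directly.
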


\begin{proof}
First suppose that $\tp(A/BC)$ thorn-forks over $C$.  Let $E\supseteq BC$ be such that every extension of $\tp(A/BC)$ to $E$ thorn-divides over $C$.  Let $M\supseteq E$ be a small elementary submodel of $\m^{\eq}$ which is $(|T|+|C|)^+$-saturated.  Then every extension of $\tp(A/BC)$ to $M$ thorn-divides over $C$.  By Proposition \ref{L:equivalence}, we see that $A'\nind[M]_CM$ for every $A'\models \tp(A/BC)$, whence $A\nthind_CB$.

Now suppose that $A\nthind_CB$.  Let $E\supseteq BC$ be such that $A'\nind[M]_CE$ for every $A'\models \tp(A/BC)$.  Let $M\supseteq E$ be a small elementary submodel of $\m^{\eq}$ which is $(|T|+|C|)^+$-saturated.  Then by monotonicity of $\ind[M]$, we have $A'\nind[M]_C M$ for every $A'\models \tp(A/BC)$.  By Proposition \ref{L:equivalence}, we see that every extension of $\tp(A/BC)$ to $M$ thorn-divides over $C$.  It follows that $\tp(A/BC)$ thorn-forks over $C$.
\end{proof}

One can define what it means for a definable predicate $\Phi(x,b)$ to strongly $\epsilon$-$k$-divide over a parameterset just as in the case of formulae.  A priori, it appears that we may get a different notion of thorn-forking if we allowed definable predicates to witness strong dividing.  However, this is not the case, as we now explain.  Suppose $A$ and $C$ are small subsets of  $\m^{\eq}$ and $b$ is a countable tuple from $\m^{\eq}$.  Say that $\tp(A/bC)$ thorn$^*$-divides over $C$ if there is a small $D\supseteq C$ such that $b\notin \acl(D)$ and for every $\epsilon$ with $0<\epsilon\leq \chi(b/D)$, there is a \emph{definable predicate} $\Phi_\epsilon(x,b)$ with parameters from $Cb$ such that $\Phi_\epsilon(A,b)=0$ and $\Phi_\epsilon(x,b)$ strongly $\epsilon$-divides over $D$.  For small $A,B,C$, one defines what it means for $\tp(A/BC)$ to thorn$^*$-divide over $C$ and thorn$^*$-fork over $C$ in the obvious ways.

\begin{lemma}\label{L:thornstar}
For small $A,B,C$, $\tp(A/BC)$ thorn$^*$-divides (-forks) over $C$ if and only if it thorn-divides (-forks) over $C$.
\end{lemma}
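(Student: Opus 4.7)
My plan is to reduce the forking equivalence to the dividing equivalence, and then to prove the dividing equivalence by approximating the definable-predicate witness by a formula witness via a standard rescaling trick.

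For the forking part, once the dividing equivalence is in hand the forking equivalence follows formally. If $E \supseteq BC$ witnesses that $\tp(A/BC)$ thorn-forks (respectively thorn${}^*$-forks) over $C$, then every extension of $\tp(A/BC)$ to $E$ thorn-divides (respectively thorn${}^*$-divides) over $C$; by the dividing equivalence every such extension satisfies the other notion as well, so $E$ witnesses the other version of forking. The inclusion thorn-divides $\Rightarrow$ thorn${}^*$-divides is also immediate, since every formula is a definable predicate.

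The content, then, is the reverse dividing direction. Suppose $\tp(A/bC)$ thorn${}^*$-divides over $C$, witnessed by a small $D \supseteq C$ with $b \notin \acl(D)$ and, for each $\epsilon \in (0, \chi(b/D)]$, a definable predicate $\Phi_\epsilon(x,b)$ with parameters from $Cb$ such that $\Phi_\epsilon(A,b) = 0$ and $\Phi_\epsilon(x,b)$ strongly $\epsilon$-$k_\epsilon$-divides over $D$ for some $k_\epsilon \geq 1$. Fix such an $\epsilon$ and choose formulae $\varphi_m(x,b)$ with parameters from $Cb$ satisfying $|\Phi_\epsilon(x,b) - \varphi_m(x,b)| \leq \tfrac{1}{m}$ for all $x \in \m^{\eq}_x$ and all $m \geq 1$. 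For $m \geq 3$, define the formula
$$\psi_m(x,b) := \tfrac{m}{m-2} \odot \bigl(\varphi_m(x,b) \dotminus \tfrac{1}{m}\bigr).$$
First, $\psi_m(A,b) = 0$, because $\Phi_\epsilon(A,b) = 0$ forces $\varphi_m(A,b) \leq \tfrac{1}{m}$, so the inner $\dotminus$ already vanishes at $A$. Second, $\psi_m(x,b)$ strongly $\epsilon$-$k_\epsilon$-divides over $D$: given $b_1, \ldots, b_{k_\epsilon} \models \tp(b/D)$ with $d(b_i, b_j) \geq \epsilon$, we have $\inf_x \max_i \Phi_\epsilon(x, b_i) = 1$, and since $\Phi_\epsilon$ takes values in $[0,1]$, this says that for every $x$ there is some $i$ with $\Phi_\epsilon(x, b_i) = 1$; then $\varphi_m(x, b_i) \geq 1 - \tfrac{1}{m}$, whence $\varphi_m(x, b_i) \dotminus \tfrac{1}{m} \geq \tfrac{m-2}{m}$, whence $\psi_m(x, b_i) = 1$. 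Therefore $\inf_x \max_i \psi_m(x, b_i) = 1$. Running this construction $\epsilon$-by-$\epsilon$ using the same $D$ produces the required formulae witnessing that $\tp(A/bC)$ thorn-divides over $C$.

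The only mildly delicate observation is that $\inf_x \max_i \Phi_\epsilon(x,b_i) = 1$ with $\Phi_\epsilon$ valued in $[0,1]$ forces a genuine \emph{pointwise} maximum equal to $1$, which is precisely what allows a $\tfrac{1}{m}$-approximation to survive after subtracting $\tfrac{1}{m}$ and rescaling by $\tfrac{m}{m-2}$. Beyond this bookkeeping the argument is routine, and I do not foresee any further obstacle.
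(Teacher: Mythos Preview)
Your proof is correct and follows essentially the same approach as the paper: approximate the definable predicate by a formula, then subtract a margin and rescale so that the formula vanishes at $A$ and still hits $1$ wherever the predicate does. The paper uses a single $\tfrac14$-approximation and the rescaling $4\odot(\tilde\varphi_\epsilon\dotminus\tfrac12)$, whereas you introduce a sequence $\varphi_m$ and the rescaling $\tfrac{m}{m-2}\odot(\varphi_m\dotminus\tfrac1m)$; any fixed $m\ge3$ already suffices, so the sequence is unnecessary but harmless.
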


\begin{proof}
The $(\Leftarrow)$ direction is immediate.  For the $(\Rightarrow)$ direction, suppose $\tp(A/BC)$ thorn$^*$-divides over $C$.  Let $b\subseteq B$ be a countable tuple such that $\tp(A/bC)$ thorn$^*$-divides over $C$, witnessed by $D\supseteq C$.  Fix $\epsilon \in (0,\chi(b/D)]$.  Let $\Phi_\epsilon(x,b)$ be a definable predicate with parameters from $Cb$ such that $\Phi_\epsilon(A,b)=0$ and $\Phi_\epsilon(x,b)$ strongly $\epsilon$-divides over $D$.  Let $\tilde{\varphi}_\epsilon(x,b)$ be an $L(Cb)$-formula such that $\sup_x|\Phi_\epsilon(x,b)-\tilde{\varphi}_\epsilon(x,b)|\leq \frac{1}{4}$.  Let $\varphi_\epsilon(x,b):=4\odot(\tilde{\varphi}_\epsilon(x,b)\dotminus \frac{1}{2})$.  Note that $\tilde{\varphi}_\epsilon(A,b)\leq \frac{1}{4}$, whence $\varphi_\epsilon(A,b)=0$.  It remains to show that $\varphi_\epsilon(x,b)$ strongly $\epsilon$-divides over $D$.  Suppose $\Phi_\epsilon(x,b)$ strongly $\epsilon$-$k$-divides over $D$.  Let $b_1,\ldots,b_k\models \tp(b/D)$ be $\epsilon$-apart.  Fix $e\in (\m^{\eq})_x$.  Choose $i\in \{1,\ldots,k\}$ such that $\Phi_\epsilon(e,b_i)=1$.  For this $i$, we have $\tilde{\varphi}_\epsilon(e,b_i)\geq \frac{3}{4}$, so $\tilde{\varphi}_\epsilon(e,b_i)\dotminus \frac{1}{2}\geq \frac{1}{4}$, whence $\varphi_\epsilon(e,b_i)=1$.
\end{proof}

Let us end this section with the definition of superrosiness, which is meant to mimic the definition of supersimplicity for continuous logic.

\begin{df}
Suppose that $T$ is rosy.  Then we say that $T$ is \textbf{superrosy} if for any finite tuple $a$ from $\m^{\eq}$, any small $B\subseteq \m^{\eq}$, and any $\epsilon>0$, there is a finite tuple $c$ which is similar to $a$ and a finite $B_0\subseteq B$ such that $d(a,c)<\epsilon$ and $c\thind_{B_0}B$.
\end{df}

\section{Weak Elimination of Finitary Imaginaries}

In this section, we discuss what it means for a continuous theory to weakly eliminate finitary imaginaries.  We then show that a real rosy (continuous) theory which weakly eliminates finitary imaginaries is rosy with respect to finitary imaginaries.  In fact, our proof will show that in classical logic, a real rosy theory which admits weak elimination of imaginaries is rosy, which is a fact that, to our knowledge, has not yet appeared in the literature on classical rosy theories.

The following lemma is the continuous analog of the discussion on weak elimination of imaginaries from \cite{Poizat}, pages 321-323.

\begin{lemma}\label{L:WEFI}
The following conditions are equivalent:
\begin{enumerate}
\item For every finitary definable predicate $\varphi(x,a)$ with real parameters, there is a finite tuple $c$ from $\m$ such that:
\begin{itemize}
\item $\varphi(x,a)$ is a $c$-definable predicate, and 
\item if $B$ is a real parameterset for which $\varphi(x,a)$ is also a $B$-definable predicate, then $c\in \acl(B)$.
\end{itemize}
\item For every finitary definable predicate $\varphi(x,a)$ with real parameters, there is a finite tuple $c$ from $\m$ such that:
\begin{itemize}
\item $\varphi(x,a)$ is a $c$-definable predicate, and 
\item if $d$ is a finite tuple from $\m$ for which $\varphi(x,a)$ is also a $d$-definable predicate, then $c\in \acl(d)$.
\end{itemize}
\item For every finitary definable predicate $\varphi(x,a)$ with real parameters, there is a definable predicate $P(x,c)$, $c$ a finite tuple from $\m$, such that $\varphi(x,a)\equiv P(x,c)$ and the set $$\{c' \ | \ c'\equiv c \text{ and }\varphi(x,a)\equiv P(x,c')\}$$ is compact.
\item For every finitary imaginary $e\in \m^{feq}$, there is a finite tuple $c$ from $\m$ such that $e\in \dcl(c)$ and $c\in \acl(e)$. 
\end{enumerate}
\end{lemma}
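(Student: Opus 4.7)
The plan is to prove the chain $(1)\Rightarrow (2)\Rightarrow (4)\Rightarrow (1)$ together with the equivalence $(3)\Leftrightarrow (4)$. The implication $(1)\Rightarrow (2)$ is immediate since a finite tuple from $\m$ is a real parameterset. For $(4)\Rightarrow (1)$, given $\varphi(x,a)$, let $e\in \m^{\feq}$ be its canonical parameter in the sort $S_\varphi$, apply (4) to obtain a finite real tuple $c$ with $e\in \dcl(c)$ and $c\in \acl(e)$; then $\varphi(x,a)$ is $c$-definable, and for any real parameterset $B$ over which $\varphi(x,a)$ is definable one has $e\in \dcl(B)$, hence $c\in \acl(e)\subseteq \acl(B)$.

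The heart of the proof is $(2)\Rightarrow (4)$. Given a finitary imaginary $e\in \m^{\feq}$, realize it as the canonical parameter $[a]$ of some finitary definable predicate $\varphi(x,a)$ with real parameters, and apply (2) to obtain a finite real tuple $c$. Since $\varphi(x,a)$ is $c$-definable, $e\in \dcl(c)$, so only $c\in \acl(e)$ remains. The key symmetry step is the following: for any $\sigma\in \Aut(\m/e)$, the automorphism fixes the canonical parameter $e$, hence fixes $\varphi(x,a)$ as a predicate (since $\varphi(x,a)=\varphi(x,\sigma(a))$), so $\varphi(x,a)$ is also $\sigma(c)$-definable. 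Applying the defining property of $c$ supplied by (2) with $d:=\sigma(c)$ yields $c\in \acl(\sigma(c))$; applying $\sigma^{-1}$ then gives $\sigma^{-1}(c)\in \acl(c)$. As $\sigma$ ranges over $\Aut(\m/e)$, the entire orbit of $c$ under $\Aut(\m/e)$ lies in the compact set $\acl(c)$ (in the real sort of arity $|c|$); since that orbit is closed (being the realization set of $\tp(c/e)$), it is compact, which is precisely $c\in \acl(e)$. This symmetry trick is the main obstacle and the one step that is genuinely nontrivial.

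For $(3)\Leftrightarrow (4)$, fix $\varphi(x,a)$ with canonical parameter $e$ and a definable predicate $P$ with $\varphi(x,a)\equiv P(x,c)$. The key observation is that the set $S:=\{c'\mid c'\equiv c\text{ and }\varphi(x,a)\equiv P(x,c')\}$ coincides with the $\Aut(\m/e)$-orbit of $c$: any $c'\in S$ admits an automorphism of $\m$ taking $c$ to $c'$, and this automorphism must fix $e$ since it sends $\varphi(x,a)=P(x,c)$ to $P(x,c')=\varphi(x,a)$; conversely, any $\sigma\in \Aut(\m/e)$ fixes $\varphi(x,a)=P(x,c)$ and hence sends $c$ into $S$. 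Consequently $S$ is compact iff $c\in \acl(e)$, which together with the automatic fact that $e\in \dcl(c)$ establishes the equivalence, after (as before) realizing any finitary imaginary as the canonical parameter of a suitable finitary definable predicate with real parameters.
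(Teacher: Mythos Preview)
Your overall plan and most of the steps are correct, and your organization differs only cosmetically from the paper's (the paper proves the cycle $(1)\Rightarrow(2)\Rightarrow(3)\Rightarrow(4)\Rightarrow(1)$, while you go $(1)\Rightarrow(2)\Rightarrow(4)\Rightarrow(1)$ and handle $(3)\Leftrightarrow(4)$ separately). Your observation that the set $S$ in (3) is \emph{equal} to the $\Aut(\m/e)$-orbit of $c$ is slightly sharper than what the paper writes down (the paper only uses $Y\subseteq X$), and it makes $(3)\Leftrightarrow(4)$ very clean.

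However, there is a genuine gap in your $(2)\Rightarrow(4)$ argument. You conclude by saying that the orbit of $c$ under $\Aut(\m/e)$ ``lies in the compact set $\acl(c)$'' and hence, being closed, is compact. The problem is that $\acl(c)$ is \emph{not} compact in general: it is a union, over the algebraic types $p\in S_{|c|}(c)$, of the compact sets $p(\m)$, and there may well be infinitely many such types (think of $\mathrm{ACF}$ with the discrete metric, where $\acl(c)$ is the field-theoretic algebraic closure). So the inference ``closed subset of $\acl(c)$, hence compact'' fails.

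The repair is exactly the argument the paper carries out in its $(2)\Rightarrow(3)$ step: $\acl(c)$ is a \emph{small} set (a small union of compact sets), while the orbit is the zeroset of $\tp(c/e)$, a type over a small parameter set. If the orbit were not compact, it would fail to be totally bounded, and then by saturation one could find $\kappa(\m)$-many pairwise $\epsilon$-apart realizations of $\tp(c/e)$ for some $\epsilon>0$; but these would all lie in $\acl(c)$, contradicting its smallness. Equivalently, for any $\epsilon>0$ the set $\acl(c)$ is covered by a small number of $\epsilon$-balls, whereas a non-totally-bounded type-definable set over a small base cannot be so covered. Once you insert this saturation/smallness step in place of ``$\acl(c)$ is compact'', your argument goes through.
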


\begin{proof}
$(1)\Rightarrow (2)$ is trivial.  $(2)\Rightarrow (3)$:  Fix a finitary definable predicate $\varphi(x,a)$ and let $c$ be as in (2) for $\varphi(x,a)$.  Let $P(x,c)$ be a $c$-definable predicate for which $\varphi(x,a)\equiv P(x,c)$.  We claim that this $P(x,c)$ is as desired.  Set $$X:=\{c' \ | \ c'\equiv c \text{ and }\varphi(x,a)\equiv P(x,c')\}.$$  Let $p(c):=\tp(c/\emptyset)$.  Then $X=p(\m)\cap \z(\sup_x|\varphi(x,a)-P(x,z)|)$, whence $X$ is closed and hence complete.  Suppose that $X$ is not compact.  It follows that $X$ is not totally bounded.  Choose $\epsilon>0$ such that $X$ cannot be covered by finitely many balls of radius $\epsilon$.  By the Compactness Theorem, it follows that $X$ cannot be covered by a small number of balls of radius $\epsilon$.  Since $\acl(c)$ is the union of a small number of sets, each of which can be covered by finitely many balls of radius $\epsilon$, it follows that $X\nsubseteq \acl(c)$.  Let $c'\in X\setminus \acl(c)$.  Take $\sigma\in \Aut(\m)$ such that $\sigma(c')=c$.  Set $c'':=\sigma(c)$.  Since $P(x,c)\equiv P(x,c')$, we have $P(x,c'')\equiv P(x,c)$, whence $\varphi(x,a)$ is defined over $c''$.  It follows that $c\in \acl(c'')$.  However, applying $\sigma^{-1}$, we get $c'\in \acl(c)$, a contradiction.

$(3)\Rightarrow (4)$:  Let $e\in \m^{feq}$ be a finitary imaginary.  Let $\varphi(x,y)$ be a finitary definable predicate such that $e$ is the canonical parameter for $\varphi(x,a)$.  Let $P(x,c)$ be as in (3) for $\varphi(x,a)$.  We claim that $c$ is the desired tuple.  First suppose that $\sigma\in \Aut(\m^{feq}|c)$.  Then $$\varphi(x,a)\equiv P(x,c)\equiv P(x,\sigma(c))\equiv\varphi(x,\sigma(a)),$$ whence $\sigma(e)=e$.  It follows that $e\in \dcl(c)$.  Now suppose that $\sigma\in \Aut(\m^{feq}|e)$.  Then $P(x,c)\equiv\varphi(x,a)\equiv \varphi(x,\sigma(a))\equiv P(x,\sigma(c))$.  This implies that
$$Y:=\{\sigma(c) \ | \ \sigma\in \Aut(\m^{feq}|e)\}\subseteq X:=\{c' \ | \ c'\equiv c \text{ and } \varphi(x,a)\equiv P(x,c')\}.$$  Since $X$ is compact and $Y$ is closed (it is the set of realizations of $\tp(c/e))$, it follows that $Y$ is compact, i.e. that $c\in \acl(e)$.

$(4)\Rightarrow (1)$:  Let $\varphi(x,a)$ be a finitary definable predicate and let $e\in\m^{feq}$ be a canonical parameter for $\varphi(x,a)$.  Let $c$ be a finite tuple from $\m$ such that $e\in \dcl(c)$ and $c\in \acl(e)$.  We claim that this $c$ is as desired.  Suppose $\sigma \in \Aut(\m|c)$.  Then $\sigma(e)=e$, whence $\varphi(x,a)\equiv \varphi(x,\sigma(a))$.  Thus, $\varphi(x,a)$ is defined over $c$.  Now suppose that $\varphi(x,a)$ is defined over $B$.  Let $\sigma \in\Aut(\m|B)$.  Then $\varphi(x,a)\equiv \varphi(x,\sigma(a))$, i.e. $\sigma(e)=e$.  It follows that $e\in \dcl(B)$, and since $c\in \acl(e)$, we have $c\in \acl(B)$.              
\end{proof}

\begin{df}
Say that $T$ has \textbf{weak elimination of finitary imaginaries} if any of the equivalent conditions of the previous lemma hold.
\end{df}

The following lemma is the continuous analog of a classical lemma due to Lascar.  The classical version can be used to show that the theory of the infinite set has weak elimination of imaginaries.  We will use it in the next section to show that the theory of the Urysohn sphere has weak elimination of finitary imaginaries.

\begin{lemma}\label{L:lascar}
Suppose the following two conditions hold:
\begin{enumerate}
\item There is no strictly decreasing sequence $A_0 \supsetneq A_1 \supsetneq A_2 \supsetneq \ldots$, where each $A_n$ is the real algebraic closure of a finite set of real elements.
\item If $A$ and $B$ are each the real algebraic closure of a finite subset of $\m$ and $\varphi(x,a)$ is a finitary definable predicate which is defined over $A$ and also defined over $B$, then $\varphi(x,a)$ is defined over $A\cap B$.
\end{enumerate}
Then $T$ has weak elimination of finitary imaginaries.
\end{lemma}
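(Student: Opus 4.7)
The plan is to verify condition (1) of Lemma \ref{L:WEFI}. Fix a finitary definable predicate $\varphi(x,a)$ with real parameters, and consider the family
\[
  \mathcal{C}_\varphi := \{\acl(c_0) : c_0 \text{ a finite real tuple such that } \varphi(x,a) \text{ is defined over } \acl(c_0)\},
\]
which contains $\acl(a)$ and is therefore non-empty. By hypothesis (1) of the lemma, no strictly decreasing sequence exists in $\mathcal{C}_\varphi$, so descending greedily one obtains a minimal element $A^{*} = \acl(c^{*})$ for some finite real tuple $c^{*}$.

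The heart of the proof is to show that $A^{*}$ is contained in every member of $\mathcal{C}_\varphi$. Given $A' = \acl(d') \in \mathcal{C}_\varphi$, hypothesis (2) gives that $\varphi(x,a)$ is defined over $A^{*} \cap A'$. If one had $A^{*} \cap A' \subsetneq A^{*}$, the strategy is to extract a finite tuple $e \subseteq A^{*} \cap A'$ with $\acl(e) \in \mathcal{C}_\varphi$, giving $\acl(e) \subsetneq A^{*}$ and contradicting minimality of $A^{*}$. In classical logic this step is immediate because a formula uses only finitely many parameters; in continuous logic a definable predicate is merely a uniform limit of formulas, so one only obtains directly a \emph{countable} $E \subseteq A^{*} \cap A'$ with $\varphi(x,a)$ defined over $\acl(E)$. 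Reducing to a finite subset is the principal technical obstacle; I would handle it by invoking hypothesis (1) on the ascending chain $\acl(E_{n})$ generated by finite $E_{n} \subseteq E$ together with uniform approximability of $\varphi(x,a)$, forcing stabilization at some finite stage.

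With this uniqueness in hand, suppose $B$ is a real parameterset over which $\varphi(x,a)$ is defined. For any $\sigma \in \Aut(\m | B)$, $\sigma$ fixes $\varphi(x,a)$ (since $\sigma(a) \equiv_{B} a$), so $\sigma(A^{*}) = \acl(\sigma(c^{*})) \in \mathcal{C}_\varphi$, and by the uniqueness just established $\sigma(A^{*}) = A^{*}$. Thus $A^{*}$ is setwise $\Aut(\m|B)$-invariant, whence the $B$-orbit of $c^{*}$ lies inside $A^{*}$, which has cardinality bounded independently of $\m$. A standard saturation argument then shows that the set of realizations of $\tp(c^{*}/B)$ is compact, giving $c^{*} \in \acl(B)$. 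Setting $c$ to be $c^{*}$ augmented, if necessary, by a finite tuple of further parameters from $A^{*}$ needed to express $\varphi(x,a)$ as a $c$-definable predicate yields the tuple required by condition (1) of Lemma \ref{L:WEFI}, completing the verification of weak elimination of finitary imaginaries.
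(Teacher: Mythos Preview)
Your overall strategy matches the paper's: choose $c^*$ with $\acl(c^*)$ minimal among algebraic closures of finite tuples over which $\varphi(x,a)$ is defined, and then argue that $c^*$ lies in the algebraic closure of any other defining set. The paper simplifies matters by verifying condition~(2) of Lemma~\ref{L:WEFI} rather than condition~(1), so it only needs to compare against \emph{finite} tuples $d$; this makes your final paragraph---the setwise-invariance and saturation argument---unnecessary. (That argument can in fact be made rigorous: $|\acl(c^*)|\le 2^{\aleph_0}$ independently of the monster, while a non-algebraic type over a small base acquires more than $2^{\aleph_0}$ realizations in a sufficiently saturated model. But it is an avoidable detour.)

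The genuine gap lies in your reduction from a countable $E\subseteq A^*\cap A'$ over which $\varphi$ is defined to a finite such tuple. You propose to invoke hypothesis~(1) on the ascending chain $\acl(E_n)$, but hypothesis~(1) forbids infinite strictly \emph{descending} chains; it says nothing about ascending chains, and there is no reason the $\acl(E_n)$ should stabilize. Without this reduction you cannot contradict the minimality of $A^*$, and the argument stalls. The same obstacle recurs in your last sentence, where you need $\varphi$ to be definable over a \emph{finite} tuple inside $A^*$ rather than merely over $A^*$ itself. The paper's own proof elides this point as well, writing ``$\varphi(x,a)$ is defined over $c\cap d$'' as though hypothesis~(2) yielded the finite set $c\cap d$ rather than $\acl(c)\cap\acl(d)$; in the intended application to the Urysohn sphere this is harmless because there the real algebraic closure of a finite set is itself finite, but your proposed repair of the general case does not work.
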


\begin{proof}
Let $\varphi(x,a)$ be a finitary definable predicate.  We will verify condition (2) of Lemma \ref{L:WEFI} for $T$.  By (1), there is a finite tuple $c$ such that $\varphi(x,a)$ is defined over $c$ and $\varphi(x,a)$ is not defined over any finite tuple $c'$ such that $\acl(c')\subsetneq \acl(c)$.  Now suppose that $\varphi(x,a)$ is defined over the finite tuple $d$.  We must show that $c\in \acl(d)$.  By (2), $\varphi(x,a)$ is defined over $c\cap d$.  By the choice of $c$, we must have $\acl(c \cap d)=\acl(c)$ which implies that $c\in \acl(d)$.      
\end{proof}

We now aim to show that a real rosy theory which has weak elimination of finitary imaginaries is rosy with respect to finitary imaginaries.  We first need a simplifying lemma.

\begin{lemma}\label{L:reduction}
Suppose that $A$ is a set of finitary imaginaries and $A\nthind_CD$, where $C\subseteq D$.  Then $B\nthind_CD$ where $B$ is a set of real elements for which $\pi(B)=A$.  
\end{lemma}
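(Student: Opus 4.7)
The plan is to prove the contrapositive: assuming $B\thind_C D$, show $A\thind_C D$. Using the corollary above which identifies $\thind$ with non-thorn-forking, this amounts to showing that if $\tp(B/D)$ does not thorn-fork over $C$, then $\tp(A/D)$ does not thorn-fork over $C$. So I fix an arbitrary $E\supseteq D$ and aim to produce $A'\equiv_D A$ with $\tp(A'/E)$ not thorn-dividing over $C$.

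By the assumption, I first choose $B'\equiv_D B$ such that $\tp(B'/E)$ does not thorn-divide over $C$, and then set $A':=\pi(B')$. Since the projection $\pi$ is built componentwise from function symbols of $T^{\eq}$ and is in particular $\emptyset$-definable, any $D$-automorphism taking $B$ to $B'$ automatically sends $A=\pi(B)$ to $A'=\pi(B')$; hence $A'\equiv_D A$. The remaining work is to verify that $\tp(A'/E)$ does not thorn-divide over $C$.

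I argue this by contradiction. Suppose $\tp(A'/E)$ thorn-divides over $C$, witnessed by a countable $b\subseteq E$, a parameterset $D'\supseteq C$ with $b\notin\acl(D')$, and, for each $\epsilon\in (0,\chi(b/D')]$, a formula $\varphi_\epsilon(x,b)$ such that $\varphi_\epsilon(A',b)=0$ and $\varphi_\epsilon(x,b)$ strongly $\epsilon$-divides over $D'$. The key translation is to define $\tilde{\varphi}_\epsilon(y,b):=\varphi_\epsilon(\pi(y),b)$, which is again a formula because $\pi$ is a tuple of function symbols of $T^{\eq}$. Then $\tilde{\varphi}_\epsilon(B',b)=\varphi_\epsilon(A',b)=0$, and, because each component of $\pi$ is surjective onto its finitary imaginary sort, for any $b_1,\ldots,b_k\models \tp(b/D')$ that are pairwise $\epsilon$-apart one has
\[
\inf_y \max_{1\leq i\leq k}\tilde{\varphi}_\epsilon(y,b_i) \;=\; \inf_x \max_{1\leq i\leq k}\varphi_\epsilon(x,b_i) \;=\; 1.
\]
Thus $\tilde{\varphi}_\epsilon(y,b)$ strongly $\epsilon$-divides over $D'$, and with the same $b$ and $D'$ this exhibits $\tp(B'/E)$ as thorn-dividing over $C$, contradicting the choice of $B'$.

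The main obstacle is really just conceptual bookkeeping: checking that the multi-sorted finitary imaginary sorts housing the components of $A$, the corresponding componentwise projections making up $\pi$ from tuples of real sorts, and the resulting formula translation via $\pi$ all match up correctly. Once this is in place, the surjectivity of each projection transfers the defining infimum condition of strong $\epsilon$-dividing automatically, so no additional compactness or saturation input is required.
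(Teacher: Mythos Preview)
Your proof is correct, but it follows a different route from the paper's argument. The paper works directly with the geometric definition of $\thind$: it first shows that $B\ind[M]_C D$ implies $A\ind[M]_C D$ by the one-line observation that $A\subseteq\dcl(B)$ forces $\acl(AC')\subseteq\acl(BC')$ for every $C'\in[C,\acl(D)]$, and then unwinds the definition of $\thind$ to lift this to $B\thind_C D\Rightarrow A\thind_C D$. Your approach instead passes through the formula characterization (the corollary identifying $\thind$ with non-thorn-forking), and transfers strong $\epsilon$-dividing from the imaginary sorts to the real sorts by precomposing the witnessing formulae with the surjective projection maps $\pi$. Both arguments are short; the paper's is slightly more conceptual (it never touches a formula), while yours makes the mechanism completely explicit and shows concretely why the dividing witnesses descend. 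Either is a perfectly good proof of the lemma.
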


\begin{proof}
First suppose that $B\ind[M]_CD$.  We show that $A\ind[M]_CD$.  Let $C'\in [C,\acl(D)]$.  Then
$$\acl(BC')\cap \acl(DC')=\acl(C').$$  Since $A\subseteq \dcl(B)$, we have
$$\acl(AC')\cap \acl(DC')\subseteq \acl(BC')\cap \acl(DC')=\acl(C').$$  This shows that $A\ind[M]_CD$.

Now suppose that $B\thind_CD$.  Let $E\supseteq D$.  Then there is $B'\equiv_D B$ such that $B'\ind[M]_CE$.  By the first part of the proof, we have $\pi(B')\ind[M]_CE$.  Since $\pi(B')\equiv_D A$, we see that $A\thind_CD$.   
\end{proof}
\noindent \textbf{Notation:}  Suppose that $T$ has weak elimination of finitary imaginaries.  For a finitary imaginary $e$, we let $l(e)$ denote a real tuple such that $e\in \dcl(l(e))$ and $l(e)\in \acl(e)$.  We refer to $l(e)$ as a \emph{weak code for $e$.}  For a set of finitary imaginaries $E$, we let $l(E):=\bigcup\{l(e) \ | \ e\in E\}$.

\begin{lemma}\label{L:imagreal}
Suppose that $T$ has weak elimination of finitary imaginaries.  Suppose $B\subseteq \m$ and $D\subseteq \m^{feq}$ are small.  Further suppose that $C\subseteq D$ is such that $B\nthind_CD$.  Then $B\nthind_{l(C)}l(D)$ (in the real sense).
\end{lemma}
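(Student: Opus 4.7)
The plan is to prove the contrapositive: assuming $B\thind_{l(C)}l(D)$ in the real sense, derive $B\thind_CD$. The central algebraic fact driving the argument is that for any set $X$ of finitary imaginaries, $\acl(X)=\acl(l(X))$. Indeed, $l(X)\subseteq\acl(X)$ gives $\acl(l(X))\subseteq\acl(X)$, and $X\subseteq\dcl(l(X))$ gives the reverse inclusion. This identity lets us freely interchange $X$ and $l(X)$ inside algebraic closures, and in particular yields $\acl(YC')=\acl(Y\cup l(C'))$ for any $Y$ and any $C'\subseteq\m^{\feq}$.

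First I would establish an auxiliary $\ind[M]$-transfer claim: whenever $E\subseteq\m^{\feq}$ with $D\subseteq E$ and $F$ is a real set with $F\supseteq l(E)\cup l(C)$, the hypothesis $B\ind[M]_{l(C)}F$ in the real sense implies $B\ind[M]_CE$. To check this, fix $C'\in[C,\acl(E)]$; then $l(C')\in[l(C),\acl(F)]$, since $l(C)\subseteq l(C')$ and $l(C')\subseteq\acl(C')\subseteq\acl(E)=\acl(l(E))\subseteq\acl(F)$. The hypothesis then yields $\acl(B\cup l(C'))\cap\acl(F\cup l(C'))=\acl(l(C'))$. Applying the central identity gives $\acl(BC')=\acl(B\cup l(C'))$ and $\acl(C')=\acl(l(C'))$, while $\acl(EC')=\acl(E)\subseteq\acl(F)\subseteq\acl(F\cup l(C'))$. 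Intersecting, $\acl(BC')\cap\acl(EC')\subseteq\acl(l(C'))=\acl(C')$, and the reverse inclusion is trivial.

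With this auxiliary claim in hand, the lemma follows directly. Given any $E\supseteq D$ in $\m^{\feq}$, set $F:=l(E)$, which is a real set containing $l(D)\cup l(C)$. Applying $B\thind_{l(C)}l(D)$ to the real extension $F\supseteq l(D)$, we obtain $B'\equiv_{l(D)l(C)}B$ with $B'\ind[M]_{l(C)}F$. Since $D\cup C\subseteq\dcl(l(D)\cup l(C))$, this type equality automatically upgrades to $B'\equiv_{DC}B$. The auxiliary claim applied to $B'$, $E$, $F$ then gives $B'\ind[M]_CE$, which certifies $B\thind_CD$.

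The only conceptual input is the identity $\acl(X)=\acl(l(X))$; beyond that the argument is routine bookkeeping with algebraic closures together with the preservation of types under the $\dcl$-relation between $DC$ and $l(D)l(C)$. The main technical subtlety is making sure that the extension $E$ appearing in the definition of $\thind$ can be arranged to lie within $\m^{\feq}$ so that $l(E)$ is well-defined; this is consistent with the finitary-imaginary framework of the lemma and should cause no real trouble.
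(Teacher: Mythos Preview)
Your proof is correct and follows essentially the same route as the paper's: prove the contrapositive by first establishing the $\ind[M]$-transfer (your auxiliary claim is the paper's ``first part'' applied with $E$ in place of $D$ and $F=l(E)$), then use it together with $D\subseteq\dcl(l(D))$ to push the $\thind$-witness from the real side back to the imaginary side. The only cosmetic difference is that you isolate the identity $\acl(X)=\acl(l(X))$ explicitly, whereas the paper uses its two halves ($l(X)\subseteq\acl(X)$ and $X\subseteq\dcl(l(X))$) directly inside the chain of inclusions.
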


\begin{proof}
We first show that if $B\ind[M]_{l(C)}l(D)$, then $B\ind[M]_CD$.  Suppose that $C'\in [C,\acl(D)]$.  Then $l(C')\in [l(C),l(\acl(D))]\subseteq [l(C),\acl(l(D))]$.  Since $B\ind[M]_CD$, we have
$$\acl(Bl(C'))\cap \acl(l(D)l(C'))=\acl(l(C')).$$  It follows that
$$\acl(BC')\cap \acl(DC')\subseteq \acl(Bl(C'))\cap \acl(l(D)l(C'))=\acl(l(C'))\subseteq \acl(C').$$  This proves that $B\ind[M]_CD$.

Now suppose that $B\thind_{l(C)}l(D)$.  Suppose $E\supseteq D$.  Then since $l(E)\supseteq l(D)$, there is $B'\equiv_{l(D)}B$ with $B'\ind[M]_{l(C)}l(E)$.  By the first part of the proof, we have $B'\ind[M]_CE$.  Since $D\subseteq \dcl(l(D))$, we have that $B'\equiv_D B$, proving that $B\thind_CD$.      
\end{proof}

\begin{rmk}
The first part of the proof of Lemma \ref{L:imagreal} only used that $T$ had \emph{geometric elimination of finitary imaginaries}, that is, for every $e\in \m^{\feq}$, there is a finite tuple $l(e)$ from $\m$ such that $e$ and $l(e)$ are interalgebraic.  Perhaps a more careful analysis of the second part of the proof could yield that Lemma \ref{L:imagreal} holds under the weaker assumption of geometric elimination of finitary imaginaries.  Also, in the above proof, we never used the fact that each weak code is finite.  In fact, if $\kappa(\m)$ is regular and each weak code is small, then for a small $D\subseteq \m^{\eq}$, $l(D)$ will also be small and the above lemma will hold in this case as well.
\end{rmk}

\begin{thm}\label{T:weakrosy}
Suppose that $T$ has weak elimination of finitary imaginaries and is real rosy.  Then $T$ is rosy with respect to finitary imaginaries.
\end{thm}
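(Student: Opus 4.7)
The plan is to prove local character for $\thind$ on finitary imaginary sorts by transporting the question down to the real sorts via the two reduction lemmas just established (Lemma \ref{L:reduction} and Lemma \ref{L:imagreal}), applying real rosiness on that side, and then transporting the resulting base back up to a small subset of the original finitary-imaginary parameterset. Concretely, I fix a finite tuple $a$ of finitary imaginaries and a small parameterset $\mathcal{B} \subseteq \m^{\feq}$, and seek a small $C \subseteq \mathcal{B}$ with $a \thind_C \mathcal{B}$.

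The steps, in order. First, choose a finite real representative $a_0$ of $a$ with $\pi(a_0) = a$; for example $a_0 := l(a)$, which is finite by hypothesis. Second, form the real parameterset $l(\mathcal{B}) := \bigcup_{b \in \mathcal{B}} l(b)$, which is again small because each $l(b)$ is finite. Third, apply real rosiness to the real tuple $a_0$ and the real parameterset $l(\mathcal{B})$ to obtain a small $C' \subseteq l(\mathcal{B})$ with $a_0 \thind_{C'} l(\mathcal{B})$. Fourth, lift $C'$ back into $\mathcal{B}$: for each $c' \in C'$ pick some $b_{c'} \in \mathcal{B}$ with $c' \in l(b_{c'})$, and set $C := \{b_{c'} : c' \in C'\}$, which is a small subset of $\mathcal{B}$ satisfying $C' \subseteq l(C) \subseteq l(\mathcal{B})$. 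Fifth, apply base monotonicity of $\thind$ (real) to upgrade $a_0 \thind_{C'} l(\mathcal{B})$ to $a_0 \thind_{l(C)} l(\mathcal{B})$. Sixth, use the contrapositive of Lemma \ref{L:imagreal}, with its $B$ equal to $a_0$ (real) and its $D$ equal to $\mathcal{B}$, to deduce $a_0 \thind_C \mathcal{B}$. Seventh, use the contrapositive of Lemma \ref{L:reduction}, noting $\pi(a_0) = a$, to conclude $a \thind_C \mathcal{B}$, as required.

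I do not expect any serious obstacle beyond keeping the directions of the two reduction lemmas straight and checking the cardinality bookkeeping for the operation $C \mapsto l(C)$: small subsets of $\mathcal{B}$ are mapped to small real parametersets, and, conversely, any small real subset of $l(\mathcal{B})$ can be covered by $l(C)$ for some small $C \subseteq \mathcal{B}$. Both facts follow at once from the finiteness of individual weak codes. The genuine content of the theorem is already packaged inside Lemmas \ref{L:reduction} and \ref{L:imagreal}, so the argument here is essentially the natural bookkeeping that combines them with real rosiness and base monotonicity.
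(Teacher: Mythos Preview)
Your argument is correct and is essentially the paper's proof rephrased directly rather than by contradiction: both combine Lemmas \ref{L:reduction} and \ref{L:imagreal} with base monotonicity of $\thind$ and the bookkeeping that small subsets of $l(\mathcal{B})$ lift to small subsets of $\mathcal{B}$. Two minor points to tidy: the suggestion $a_0 := l(a)$ does not literally satisfy $\pi(a_0)=a$ (a weak code is not a representative), though Lemma \ref{L:reduction}'s proof only needs $a\in\dcl(a_0)$, which does hold; and the paper verifies local character for arbitrary $A\subseteq\m^{\feq}$ rather than just finite $a$, but your argument extends verbatim by taking any real set $A_0$ with $\pi(A_0)=A$.
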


\begin{proof}
Let $A\subseteq \m^{feq}$.  We need a cardinal $\kappa(A)$ such that for any $D\subseteq \m^{feq}$, there is $C\subseteq D$ with $|C|\leq \kappa(A)$ and $A\thind_CD$.  Let $B\subseteq \m$ be such that $\pi(B)=A$.  Set $\kappa(A):=\kappa(B)$, where $\kappa(B)$ is understood to be the cardinal that works for $B$ when only considering thorn-forking in the real sense; $\kappa(B)$ exists by real rosiness.  Suppose, towards a contradiction, that $A\nthind_CD$ for all $C\subseteq D$ with $|C|\leq \kappa(A)$.  Then $B\nthind_CD$ for all $C\subseteq D$ with $|C|\leq \kappa(A)$ by Lemma \ref{L:reduction}.  By Lemma \ref{L:imagreal}, we have $B\nthind_{l(C)} l(D)$ for all $C\subseteq D$ with $|C|\leq \kappa(B)$.  Now suppose that $E\subseteq l(D)$ is such that $|E|\leq \kappa(B)$.  Let $C\subseteq D$ be such that $E\subseteq l(C)$ and $|C|\leq \kappa(B)$.  Then $B\nthind_El(D)$ by base monotonicity.  This contradicts the definition of $\kappa(B)$, proving the theorem.  

\end{proof}

\begin{rmk}
Say that $T$ admits \textbf{weak elimination of imaginaries} if, for every $a\in \m^{\eq}$, there is a \emph{countable} tuple $b$ from $\m$ such that $b\in \dcl(a)$ and $a\in \acl(b)$.  The above line of reasoning shows that if $T$ is real rosy and has weak elimination of imaginaries, then $T$ is rosy.
\end{rmk}

\begin{cor}\label{T:weaksuperrosy}
Suppose that $T$ is real superrosy and has weak elimination of finitary imaginaries.  Then $T$ is superrosy with respect to finitary imaginaries.
\end{cor}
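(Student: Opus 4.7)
The plan is to adapt the proof of Theorem~\ref{T:weakrosy}, replacing the local-character cardinal by the approximation parameter $\epsilon$ and using uniform continuity of definable functions to bridge the translation through weak codes. Let $a=(a_1,\ldots,a_n)$ be a finite tuple from $\m^{\feq}$, $B\subseteq\m^{\feq}$ a small subset, and $\epsilon>0$. Set $\tilde a := l(a)$, a finite real tuple; by weak elimination of finitary imaginaries, $a\in\dcl(\tilde a)$, say $a = g(\tilde a)$ for some $\emptyset$-definable function $g$, and $\tilde a\in\acl(a)$. Uniform continuity of $g$ yields $\delta>0$ such that $d(\tilde a,\tilde c)<\delta$ implies $d(a,g(\tilde c))<\epsilon$ for any $\tilde c$ similar to $\tilde a$.

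Apply real superrosiness to $\tilde a$, the small real parameter set $l(B)\subseteq\m$, and $\delta$, obtaining a finite real tuple $\tilde c$ similar to $\tilde a$ with $d(\tilde a,\tilde c)<\delta$, together with a finite $B_0'\subseteq l(B)$ such that $\tilde c\thind_{B_0'}l(B)$ in the real sense. Since $B_0'$ is finite and each weak code is finite, choose a finite $B_0\subseteq B$ with $l(B_0)\supseteq B_0'$; base monotonicity of $\thind$ then upgrades this to $\tilde c\thind_{l(B_0)}l(B)$. The contrapositive of Lemma~\ref{L:imagreal} now gives $\tilde c\thind_{B_0}B$ in $\m^{\feq}$.

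Finally, set $c := g(\tilde c)$; then $c$ is a finite tuple similar to $a$ (because $g$ respects the sort decomposition of $a$) with $d(a,c)<\epsilon$, and $c\in\dcl(\tilde c)$. The argument of Lemma~\ref{L:reduction}, which really only uses $c\subseteq\dcl(\tilde c)$ rather than that $\tilde c$ consists of canonical representatives, then yields $c\thind_{B_0}B$, completing the verification. The main technical point is this last dcl-monotonicity step, which reduces to the corresponding acl-monotonicity of $\ind[M]$ via an automorphism fixing $BB_0$ (take any $\tilde c^*\equiv_{BB_0}\tilde c$ with $\tilde c^*\ind[M]_{B_0}E$ and transport $c$ through that automorphism); once this is in place, everything else is a direct transport through the weak codes along the lines of Theorem~\ref{T:weakrosy}.
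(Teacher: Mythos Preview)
Your overall strategy mirrors the paper's exactly: pass from the imaginary tuple $a$ to a finite real tuple, apply real superrosiness to that real tuple over $l(B)$, enlarge the finite base to some $l(B_0)$ by base monotonicity, translate back to independence over $B_0$ via Lemma~\ref{L:imagreal}, and finally descend to the imaginary tuple via (the $\dcl$-monotonicity reading of) Lemma~\ref{L:reduction}.  All of those steps are correct and match the paper.

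The gap is in how you pass between $a$ and a real tuple.  You take $\tilde a = l(a)$ and assert that $a \in \dcl(\tilde a)$ yields a $\emptyset$-definable function $g$ with $a = g(\tilde a)$, defined on \emph{all} tuples similar to $\tilde a$.  But $a \in \dcl(\tilde a)$ only gives a partial function on the set of realizations of $\tp(\tilde a)$; there is no reason this extends to a total $\emptyset$-definable function, and the approximating tuple $\tilde c$ supplied by real superrosiness is merely $\delta$-close to $\tilde a$, not a realization of its type.  So $g(\tilde c)$ need not make sense, and without it there is no candidate $c$.

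The paper avoids this entirely by using a \emph{representative} $a'$ of $a$ (that is, a real tuple with $\pi(a') = a$) rather than the weak code $l(a)$.  The projection $\pi$ is a function symbol in the language of $T^{\feq}$, hence globally defined and uniformly continuous by fiat; so $c := \pi(c')$ is automatically well-defined for any $c'$ similar to $a'$, and $d(a,c) < \epsilon$ follows immediately from $d(a',c') < \delta$.  With this single substitution your argument becomes the paper's.
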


\begin{proof}
Let $a\in \m^{feq}$ and $B\subseteq \m^{feq}$ be small.  Let $\epsilon>0$ be given.  Let $a'$ be a tuple from $\m$ be such that $\pi(a')=a$.  Let $\delta>0$ be such that whenever $d(x,y)<\delta$, then $d(\pi(x),\pi(y))<\epsilon$.  Since $T$ is real superrosy, there is a tuple $c'$ from $\m$ such that $d(a',c')<\delta$ and a finite $C\subseteq l(B)$ such that $c'\thind_C l(B)$.  By base monotonicity, we may assume that $C=l(B_0)$ for some finite $B_0\subseteq B$.  By Lemma \ref{L:imagreal}, we have that $c'\thind_{B_0}B$.  Let $c=\pi(c')$.  By Lemma \ref{L:reduction}, we have that $c\thind_{B_0}B$.  By choice of $c'$, we have that $d(a,c)<\epsilon$, completing the proof of the corollary. 
\end{proof}

\section{The Urysohn Sphere}

\

In this section, we present an example of an ``essentially'' continuous theory which is rosy (with respect to finitary imaginaries) but not simple, namely the theory of the Urysohn sphere.  Before proving the main results of this section, let us set up notation and recall some facts about the model theory of the Urysohn sphere.

\

\begin{df}
The \textbf{Urysohn sphere} is the unique (up to isometry) Polish metric space of diameter $\leq 1$ which is \emph{universal} (that is, every Polish metric space of diameter $\leq 1$ can be isometrically embedded into it) and \emph{ultrahomogeneous} (that is, any isometry between finite subsets of it can be extended to an isometry of the whole space).
\end{df}

We let $\U$ denote the Urysohn sphere.  We let $L_\U$ denotes the continuous signature consisting solely of the metric symbol $d$, which is assumed to have diameter bounded by $1$.  We let $T_\U$ denote the $L_\U$-theory of $\U$ and we let $\UU$ denote a monster model for $T_\U$.  We now collect some basic model theoretic facts about the Urysohn sphere, which appear to have been known for a while.  Proofs of these facts can be found in \cite{Usvy}.

\begin{facts}[Henson]

\

\begin{enumerate}
\item $T_\U$ is $\aleph_0$-categorical.
\item $T_\U$ admits quantifier-elimination.
\item $T_\U$ is the model completion of the empty $L$-theory and is the theory of existentially closed metric spaces of diameter bounded by $1$. 
\end{enumerate}
\end{facts}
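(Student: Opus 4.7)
The plan is to establish all three facts by realizing $\U$ as the (continuous) Fra\"iss\'e limit of the class $\K$ of finite metric spaces of diameter $\leq 1$, and then running a back-and-forth argument inside a monster model. First I would verify that $\K$ is a continuous Fra\"iss\'e class: hereditarity is trivial, joint embedding follows by placing two finite spaces at mutual distance $1$, and the amalgamation of $A \hookrightarrow B$ and $A \hookrightarrow C$ is witnessed by setting
\[
d(b,c) := \min\Bigl(1,\ \inf_{a \in A}(d(b,a)+d(a,c))\Bigr)
\]
for $b \in B\setminus A$ and $c \in C\setminus A$, after checking the triangle inequality. The Fra\"iss\'e limit of $\K$ is a separable, ultrahomogeneous, universal metric space of diameter $\leq 1$, so by the defining characterization it is isometric to $\U$.

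For quantifier elimination (2), it suffices to show that any two tuples $\bar a,\bar b\in \UU$ with the same distance matrix have the same complete type. I would run a back-and-forth in $\UU$: using the one-point extension property inherited from $\K$ together with $\aleph_0$-saturation, any partial isometry between finite tuples can be extended one element at a time, yielding an automorphism of $\UU$ sending $\bar a$ to $\bar b$, hence $\tp(\bar a)=\tp(\bar b)$. For $\aleph_0$-categoricity (1), QE identifies the $n$-type space over $\emptyset$ with the compact space of symmetric $n\times n$ matrices over $[0,1]$ with zero diagonal satisfying the triangle inequality, so every separable model of $T_\U$ realizes every such type and is ultrahomogeneous via QE; the characterization of $\U$ then forces it to be isometric to $\U$.

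For (3), model completeness is immediate from QE. Every metric space of diameter $\leq 1$ embeds into a model of $T_\U$: a separable space embeds into $\U$ by universality, while a general one embeds into a sufficiently saturated model via the same one-point amalgamation argument. These two facts together give that $T_\U$ is the model completion of the empty $L_\U$-theory. Finally, a metric space of diameter $\leq 1$ is existentially closed in its class iff every one-point extension of every finite subset is already realized inside it, which is precisely the Fra\"iss\'e extension property characterizing the models of $T_\U$.

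The main obstacle will be the amalgamation step --- verifying that the path-length pseudometric satisfies the triangle inequality and stays in $[0,1]$ after truncation at $1$ --- and properly handling the approximate features of continuous Fra\"iss\'e theory, where strict ultrahomogeneity must be relaxed to an approximate version and the limit is obtained only after metric completion. The transfer from QE to categoricity is also subtler than in the classical case, since one must work with the logic topology on type spaces rather than a discrete one, and the fact that quantifier-free types are determined by a compact parameter space (distance matrices) is what makes the argument go through cleanly.
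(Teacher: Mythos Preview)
The paper does not actually prove these facts: it states them as known results due to Henson and cites Usvyatsov's paper \cite{Usvy} for proofs. So there is no in-paper proof to compare against; your proposal is a self-contained argument where the paper offers only a reference.

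That said, your outline is essentially the standard route (and matches the approach in the cited reference): realize $\U$ as the Fra\"iss\'e limit of finite $[0,1]$-valued metric spaces, deduce QE from ultrahomogeneity via back-and-forth in a saturated model, and read off model completion and the existentially-closed characterization. The amalgamation you wrote down is the right one, and the obstacle you flag (checking the truncated path metric is a metric) is routine. One small wrinkle in your categoricity argument: realizing every finite quantifier-free type in a separable model $M$ does not by itself give universality for arbitrary separable spaces; the cleaner step is to observe that the one-point extension property is expressible by $L_\U$-conditions (approximate realizability of every one-point metric extension of every finite tuple), hence holds in every model of $T_\U$, and a complete separable metric space of diameter $\leq 1$ with the one-point extension property is isometric to $\U$ by the defining uniqueness of the Urysohn sphere. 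Alternatively, invoke the continuous Ryll--Nardzewski theorem directly: by QE, $S_n(T_\U)$ is homeomorphic to the space of $n\times n$ distance matrices, on which the logic and metric topologies visibly agree, which is exactly $\aleph_0$-categoricity.
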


Another fact about the Ursyohn sphere is that the algebraic closure operator is trivial.  Once again, this fact has been known for a while, but we include here a proof given to us by Ward Henson.

\begin{fact}[Henson]
For every small $A\subseteq \UU$, we have $\acl(A)=\bar{A}$.
\end{fact}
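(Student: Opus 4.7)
The plan is to establish the two inclusions separately. For $\bar{A} \subseteq \acl(A)$: the inclusion $A \subseteq \acl(A)$ is trivial, and a standard fact in continuous model theory says $\acl(A)$ is always topologically closed (the set of realizations of $\tp(c/A)$ depends continuously on $c$ in Hausdorff distance, so total boundedness is preserved under limits).

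For the harder inclusion $\acl(A) \subseteq \bar{A}$, I argue contrapositively. Fix $b \in \UU \setminus \bar{A}$ and set $\epsilon := d(b, \bar{A}) > 0$. Let $X \subseteq \UU$ be the set of realizations of $\tp(b/A)$; since $X$ is automatically closed and hence complete, to show $b \notin \acl(A)$ it suffices to exhibit an infinite sequence $(b_i)_{i < \omega} \subseteq X$ with $d(b_i, b_j) \geq \epsilon$ for all $i \neq j$, which prevents $X$ from being totally bounded and hence compact.

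The construction of such a sequence proceeds as follows. By quantifier elimination in $T_\U$, $\tp(b/A)$ is determined by the distance function $a \mapsto d(b, a)$ on $A$. I would define a metric on the abstract disjoint union $A \cup \{b_i : i < \omega\}$ by inheriting distances within $A$ from $\UU$, setting $d(b_i, a) := d(b, a)$ for every $a \in A$ and $i < \omega$, and $d(b_i, b_j) := \epsilon$ for $i \neq j$. The only nontrivial triangle inequality to check involves two new points $b_i, b_j$ and an old point $a$, namely $d(b_i, b_j) \leq d(b_i, a) + d(a, b_j)$, which reduces to $\epsilon \leq 2 d(b, a)$ and holds because $d(b, a) \geq \epsilon$. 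All distances remain at most $1$, so we obtain a metric extension of $A$ of diameter at most $1$.

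To finish, I invoke that $T_\U$ is the model completion of the empty $L_\U$-theory of metric spaces of diameter bounded by $1$, combined with the saturation of $\UU$. The metric extension just constructed witnesses the consistency over $A$ of the partial type asserting the existence of infinitely many realizations of $\tp(b/A)$ pairwise $\epsilon$-apart; by saturation of $\UU$, this partial type is realized in $\UU$, yielding the desired sequence in $X$. The main subtlety is orchestrating the metric extension so the $b_i$ are uniformly $\epsilon$-separated while preserving their distances to $A$; the remaining steps are routine uses of quantifier elimination and saturation.
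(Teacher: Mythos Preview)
Your proof is correct and follows essentially the same route as the paper: construct an abstract metric space extending $A$ by infinitely many copies of $b$ at a uniform positive mutual distance, verify the triangle inequalities, and realize the resulting type in $\UU$ via quantifier elimination and saturation. The only cosmetic difference is that the paper sets the mutual distance between the new points to $\min(2\,d(b,A),1)$ rather than your $\epsilon=d(b,\bar A)$, but either choice works.
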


\begin{proof}
The inclusion $\bar{A}\subseteq \acl(A)$ is true in any structure.  Now suppose $b\notin \bar{A}$.  Let $d(b,A)$ denote $\inf\{d(b,a) \ | \ a\in A\}$, a positive number.  Consider the following collection $p(x_i \ | \ i<\omega)$ of closed $L(A)$-conditions:
$$\{d(x_i,a)=d(b,a) \ | \ a\in A, \ i<\omega\} \cup \{d(x_i,x_j)=2\odot d(b,A) \ | \ i<j<\omega\}.$$  It is easy to verify that these conditions define a metric space, whence $p$ can be realized in $\UU$, say by $(b_i \ | \ i<\omega)$.  By quantifier elimination, $\tp(b/A)$ is determined by $\{d(b,a) \ | \ a\in A\}$.  It follows that $b_i\models \tp(b/A)$ for each $i<\omega$.  Since $(b_i \ | \ i<\omega)$ can contain no convergent subsequence, we see that $b\notin \acl(A)$.   
\end{proof}

As the above facts indicate, there appears to be an analogy between the theory of the infinite set in classical logic and the theory of the Urysohn sphere in continuous logic.  However, there is a serious difference between the two theories.  In classical logic, the theory of the infinite set is $\omega$-stable, whereas $T_\U$ is not even simple.  This fact was first observed by Anand Pillay and we provide here a proof communicated to us by Bradd Hart.  

\begin{thm}
$T_\U$ is not simple.
\end{thm}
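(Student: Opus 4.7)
The plan is to exhibit the tree property for $T_\U$, which (via the standard Ramsey-compactness argument) forces dividing to fail local character and hence shows $T_\U$ is not simple. I will build an explicit witnessing tree inside $\UU$ using the model-completion property established in Henson's facts above.

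First, for each $\eta \in 2^{<\omega}$ introduce a parameter $b_\eta$, and prescribe distances by
\[
d(b_\eta, b_\xi) :=
\begin{cases}
2/3 & \text{if } \eta \neq \xi \text{ and } \eta, \xi \text{ are comparable (one is a prefix of the other)}, \\
1 & \text{if } \eta, \xi \text{ are incomparable}.
\end{cases}
\]
A short case analysis on triples of nodes verifies that this defines a valid metric of diameter $\leq 1$. Since $T_\U$ is the model completion of the empty theory of metric spaces of diameter bounded by $1$, this countable metric configuration embeds into $\UU$.

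Next, let $\varphi(x,y) := d(x,y) \dotminus \tfrac{1}{3}$. For each branch $\sigma \in 2^\omega$, the points $\{b_{\sigma|n} : n < \omega\}$ are pairwise comparable and hence pairwise at distance $2/3$; adjoining a new point at distance exactly $1/3$ from each is consistent with the triangle inequality, so the partial type $\{\varphi(x, b_{\sigma|n}) = 0 : n < \omega\}$ is realized in $\UU$ by saturation. On the other hand, for siblings $b_{\eta \frown 0}$ and $b_{\eta \frown 1}$ we have $d(b_{\eta \frown 0}, b_{\eta \frown 1}) = 1$, so the triangle inequality forces
\[
\max\bigl(d(x, b_{\eta \frown 0}), d(x, b_{\eta \frown 1})\bigr) \geq \tfrac{1}{2}
\]
for every $x \in \UU$, and therefore
\[
\max\bigl(\varphi(x, b_{\eta \frown 0}), \varphi(x, b_{\eta \frown 1})\bigr) \geq \tfrac{1}{6}
\]
uniformly in $x$. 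Thus $\varphi$ has the $2$-tree property with a positive gap, and $T_\U$ is not simple.

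The main obstacle is setting the distances in the right range: for a common ``midpoint'' to exist along each branch, comparable pairs must have distance at most $2 \cdot \tfrac{1}{3} = \tfrac{2}{3}$; for sibling conditions at threshold $\tfrac{1}{3}$ to be strictly inconsistent, siblings must have distance strictly greater than $\tfrac{2}{3}$. The diameter bound of $1$ forces us to take the sibling distance to be exactly $1$, which just barely works. Once the tree is in place, embeddability into $\UU$ and the TP verification are routine consequences of Henson's model-completion result together with the triangle inequality.
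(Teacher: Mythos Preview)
Your argument is essentially correct, but there is one gap in the packaging. A tree indexed by $2^{<\omega}$ in which each pair of siblings is $2$-inconsistent is \emph{not} the tree property: the tree property requires infinitely many pairwise $k$-inconsistent children at each node, so that one obtains an indiscernible sequence witnessing dividing at every level. With only two children per node you cannot extract this, and no ``standard Ramsey--compactness argument'' will manufacture infinitely many pairwise inconsistent siblings from just two. The fix is immediate: index instead by $\omega^{<\omega}$ with the identical distance prescription (comparable pairs at $2/3$, incomparable pairs at $1$). The metric verification is the same case analysis, branch consistency is unchanged, and now the children $\{b_{\eta\frown i}:i<\omega\}$ of any node are pairwise at distance $1$, hence pairwise $2$-inconsistent for $\varphi$ with uniform gap $1/6$. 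This is the tree property, and non-simplicity follows.

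With that correction your route is sound but genuinely different from the paper's. The paper does not exhibit the tree property; instead it directly violates local character of dividing by taking an arbitrarily large set $A$ of points mutually $\tfrac12$-apart, forming the type $p(x)=\{d(x,a)=\tfrac14:a\in A\}$, and showing that $p$ divides over every proper closed $B\subsetneq A$ (for $a\in A\setminus B$ one builds a $B$-indiscernible sequence of realizations of $\tp(a/B)$ that are pairwise at distance $1$, forcing $2$-inconsistency of $d(x,a_i)=\tfrac14$). Both arguments rest on the same metric observation---two points at distance $1$ cannot share a neighbour within $\tfrac13$ (your version) or $\tfrac14$ (the paper's)---but the paper's star configuration yields the local-character failure in one step, while yours goes through the tree property and then appeals to its equivalence with non-simplicity. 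Your approach has the virtue of producing an explicit combinatorial witness (and in fact shows $\mathrm{TP}_2$ once you pass to $\omega^{<\omega}$); the paper's is slightly more direct given how simplicity is defined in the continuous setting.
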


\begin{proof}
Suppose $A$ is a small set of elements from $\UU$ which are all mutually $\frac{1}{2}$-apart.  Let $p(x)$ be the unique $1$-type over $A$ determined by the conditions $\{d(x,a)=\frac{1}{4} \ | \ a\in A\}$.  It suffices to show that $p$ divides over any proper closed subset $B$ of $A$.  Indeed, suppose $B\subsetneq A$ is closed and $a\in A\setminus B$.  Then, since $a\notin \acl(B)$, we can find $(a_i \ | \ i<\omega)\in \i(a/B)$ such that $d(a_i,a_j)=1$ for all $i<j<\omega$.  Indeed, the set of conditions
$$\Gamma(x_i \ | \ i<\omega):=\{d(x_i,b)=\frac{1}{2}\ | \ i<\omega, \ b\in B\}\cup \{d(x_i,x_j)=1 | \ i<j<\omega\}$$
is finitely satisfiable, and hence realized in $\UU$.  By quantifier elimination in $T_\U$, we have $(a_i \ |  \ i<\omega)\in \i(a/B)$.  We now see that $\{d(x,a_i)=\frac{1}{4} \ | \ i<\omega\}$ is inconsistent, whence the formula $d(x,a)=\frac{1}{4}$ divides over $B$.    
\end{proof}

\begin{rmk}
There are a few more model-theoretic facts about $T_\U$ that are known but have not yet appeared in the literature.  First, since the random graph is a ``subspace'' of $\U$, we see that $T_\U$ is an independent theory (in a rather strong sense).  Berenstein and Usvyatsov have observed that $T_\U$ has SOP$_3$.  Also, Usvyatsov has shown that $T_\U$ does not have the strict order property.
\end{rmk}

We now aim to prove that $T_\U$ is real rosy.  Until further notice, the independence relations $\ind[M] \ $ and $\thind$ will be restricted to the real sorts.  Suppose that $A,B,C$ are small subsets of $\UU$.  Then:

\begin{alignat}{2}
A\ind[M]_C\  B &\Leftrightarrow \text{ for all }C'\in [C,\overline{B\cup C}] (\overline{A\cup C'})\cap (\overline{B\cup C'})=\overline{C'}) \notag \\ \notag
                      &\Leftrightarrow \text{ for all }C'\in [C,\overline{B\cup C}] (\overline{A}\cap \overline{B}\subseteq \overline{C'}) \\ \notag
                      &\Leftrightarrow \overline{A}\cap \overline{B}\subseteq \overline{C}.\notag
\end{alignat}

\

\begin{lemma}
In $T_\U$, $\ind[M]$ satisfies extension, i.e. $\ind[M]=\thind$.
\end{lemma}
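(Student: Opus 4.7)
The plan is to exploit the characterization derived just above the lemma, $A \ind[M]_C B \Leftrightarrow \overline{A} \cap \overline{B} \subseteq \overline{C}$, together with the strong amalgamation property of the class of bounded metric spaces that underlies the Urysohn sphere. The task reduces to showing: given $\overline{A}\cap \overline{B}\subseteq \overline{C}$ and a small $D\supseteq BC$, one can produce $A'\equiv_{BC} A$ satisfying $\overline{A'}\cap \overline{D}\subseteq \overline{C}$.

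I would build $A'$ by the free amalgamation of $A$ and $D$ over $BC$. First, note that every $a\in A\cap\overline{BC}$ in fact lies in $\overline{C}$: since $\overline{BC}=\overline{B}\cup\overline{C}$ and $\overline{A}\cap\overline{B}\subseteq\overline{C}$ by hypothesis, such $a$ are fixed by every $\sigma\in\Aut(\UU/BC)$ and so are automatically mapped to themselves in any realization of $\tp(A/BC)$. For $a\in A\setminus\overline{BC}$ and $d\in D\setminus BC$, I would assign the maximal amalgam distance
$$d^{\ast}(a,d):=\min\!\Bigl(1,\ \inf_{e\in BC}\bigl(d(a,e)+d(e,d)\bigr)\Bigr),$$
preserving all distances within $A$ and within $D$. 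After checking that $d^{\ast}$ is a bona fide metric of diameter $\leq 1$ on the amalgam of $A$ and $D$ over $BC$, I would appeal to the universality of $\U$ (equivalently, the fact that $T_\U$ is the model completion of the empty theory of diameter-bounded metric spaces) to realize this amalgam inside $\UU$ fixing $D$, yielding $A'$. Since the distances from $A'$ to $BC$ coincide with those from $A$ to $BC$, quantifier elimination in $T_\U$ gives $A'\equiv_{BC}A$.

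It remains to verify $\overline{A'}\cap\overline{D}\subseteq\overline{C}$. Fix $x$ in this intersection, with sequences $a'_n\to x$ in $A'$ and $d_n\to x$ in $D$, so $d(a'_n,d_n)\to 0$. Passing to subsequences, I may assume every $a_n$ (the preimage of $a'_n$ under the natural isometry $A\to A'$) lies in $A\setminus\overline{BC}$ (otherwise $a'_n=a_n\in\overline{C}$, giving $x\in\overline{C}$ at once), and every $d_n\in D\setminus BC$ (otherwise $d(a_n,d_n)=d(a'_n,d_n)\to 0$ forces $a_n\to x$, hence $x\in\overline{A}\cap\overline{BC}=\overline{A}\cap(\overline{B}\cup\overline{C})\subseteq\overline{C}$ by the hypothesis). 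In the remaining case, the amalgamation formula yields $e_n\in BC$ with $d(a_n,e_n)+d(e_n,d_n)\to 0$, whence $e_n\to x$ and $a_n\to x$, so again $x\in\overline{A}\cap\overline{BC}\subseteq\overline{C}$. The main technical point I anticipate is the routine but finicky verification that the capped-at-$1$ formula $d^{\ast}$ really defines a metric; this is a classical fact about amalgamation in the class of bounded metric spaces behind the Urysohn sphere, so I would simply cite it rather than reprove it.
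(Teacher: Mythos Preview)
Your argument is correct and takes a genuinely different route from the paper's.  The paper does not prove extension directly: it instead verifies \emph{full existence} (for arbitrary $A,B,C$, find $A'\equiv_C A$ with $\overline{A'}\cap\overline{B}\subseteq\overline{C}$) and then invokes Adler's Remark 1.2(3) to conclude extension.  To do this, the authors cook up an explicit, non-standard metric $\rho(x_i,b_j):=\max(d(a_i,b_j),\,d(a_i,C))$ on the amalgam of $A$ and $B$ over $C$, and then carry out a fifteen-case check of the triangle inequality.  The payoff is that the final verification is a single line: no new element of $A'$ can land in $B\setminus C$ because $\rho(x_i,b)\geq d(a_i,C)>0$.

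You instead prove extension head-on, amalgamating over the larger base $BC$ via the standard \emph{maximal} (free) amalgam, which lets you outsource the triangle-inequality check to a classical fact about metric spaces of bounded diameter.  The price is that your verification of $\overline{A'}\cap\overline{D}\subseteq\overline{C}$ is a genuine argument with several cases, where you actually use the hypothesis $\overline{A}\cap\overline{B}\subseteq\overline{C}$ together with $\overline{BC}=\overline{B}\cup\overline{C}$.  Two small things to tidy up: (i) the amalgam should really be taken over $\overline{BC}$ rather than $BC$ (this is harmless, since $\inf_{e\in BC}=\inf_{e\in\overline{BC}}$ and elements of $A\cap\overline{BC}$ are fixed anyway), and (ii) when you ``realize this amalgam inside $\UU$ fixing $D$'' you are really using saturation of the monster (equivalently, compactness and QE), not just universality of the separable $\U$; the paper makes this compactness step explicit.
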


\begin{proof}
Since $\ind[M]$ satisfies invariance, monotonicity, transitivity, normality, and symmetry, by Remark 1.2(3) of \cite{Adler}, it suffices to check that $\ind[M]$ satisfies full existence, that is, for any small $A,B,C\subseteq \UU$, we can find $A'\equiv_CA$ such that $A'\ind[M]_CB$.  Let $A,B,C\subseteq \UU$ be small.  Without loss of generality, we may assume that $A,B,C$ are closed.  Indeed, suppose we find $A''\equiv_{\overline{C}}\overline{B}$ with $A''\ind[M]_{\overline{C}}\overline{B}$.  Let $A'\subseteq A''$ correspond to $A$, so $A''=\overline{A'}$.  Then this $A'$ is as desired.

Let $(a_i \ | \ i\in I)$ enumerate $A\setminus C$.  For each $i\in I$, set $$\epsilon_i:=\inf \{d(a_i,c)\ | \ c\in C\}>0.$$  Let $p(X,C):=\tp(A/C)$, where $X=(x_i \ | \ i\in I')$, $I\subseteq I'$, and $(x_i\ | \ i\in I)$ corresponds to the enumeration of $A\setminus C$.  Let $(b_j \ | \ j\in J)$ enumerate $B\setminus C$.  For $i\in I$ and $j\in J$, set $\delta_{i,j}:=\max(d(a_i,b_j),\epsilon_i)$.
Set $$\Sigma:=\Sigma(X):=p(X,C)\cup \{|d(x_i,b_j)-\delta_{i,j}|=0 \ | \ i\in I, j\in J\}.$$

\

\noindent \textbf{Claim:}  $\Sigma$ is satisfiable.

\noindent \textbf{Proof of Claim:}  Let $S:=\{x_i \ | \ i\in I\} \cup B \cup C$.  Let $\rho:S^2\to \r$ be defined as follows:
\begin{itemize}
\item $\rho\upharpoonright (B\cup C)^2=d\upharpoonright (B\cup C)^2$;
\item $\rho(x_{i_1},x_{i_2})=d(a_{i_1},a_{i_2})$ for all $i_1,i_2\in I$;
\item $\rho(x_i,b_j)=\delta_{i,j}$ for all $i\in I$ and all $j\in J$;
\item $\rho(x_i,c)=d(a_i,c)$ for all $i\in I$.
\end{itemize}
It suffices to show that $(S,\rho)$ is a metric space.  Indeed, suppose that $\Sigma_0\subseteq \Sigma$ is finite.  Let $S_0\subseteq S$ be finite such that the parameters and variables occuring in $\Sigma_0$ are from $S_0$.  Since $(S_0,\rho)$ is a finite metric space of diameter bounded by $1$, it is isometrically embeddable in $\U$.  By the strong homogeneity of $\UU$, we may assume that the embedding $\iota:S_0\to \UU$ is such that $\iota(y)=y$ for all $y\in S_0\cap (B\cup C)$.  Let $X_0\subset X$ be the variables appearing in $S_0$.  Since $T_\U$ admits quantifier elimination, it follows that $\iota(X_0)$ realizes $\Sigma_0$.  By compactness, $\Sigma$ is satisfiable.

In order to check that $(S,\rho)$ is a metric space, we must show that, for any $s_1,s_2,s_3\in S$, we have $\rho(s_1,s_2)\leq \rho(s_1,s_3)+\rho(s_2,s_3)$.  We distinguish this into cases, depending on what part of $S$ the $s_i$'s come from.  For example, Case ABC is the case when $s_1\in X$, $s_2\in B\setminus C$, and $s_3\in C$.  There are 15 cases for which there is either no $A$ or no $B$; these cases are trivially true.  Let us turn our attention to the remaining 12 cases.

Consider Case ACB=Case CAB.  We must show that $\rho(x_i,c)\leq \rho(x_i,b)+\rho(b,c)$.  However, we have $\rho(x_i,c)=d(a_i,c)\leq d(a_i,b)+d(b,c)\leq \rho(x_i,b)+\rho(b,c)$.  It is easily verified that this same argument handles Cases BCA, CBA, BBA, and AAB.

Next consider Case ABC=Case BAC.  We need to show that $\rho(x_i,b)\leq \rho(x_i,c)+\rho(c,b)=d(a_i,c)+d(c,b)$.  If $\rho(x_i,b)=d(a_i,b)$, then the result is clear.  Otherwise $\rho(x_i,b)=\epsilon_i\leq d(a_i,c)$, and the result is once again clear.

Next consider Case ABB=Case BAB.  We need to show that $\rho(x_i,b)\leq \rho(x_i,b')+\rho(b,b')$.  If $\rho(x_i,b)=d(a_i,b)$, then we have $\rho(x_i,b)=d(a_i,b)\leq d(a_i,b')+d(b,b')\leq \rho(x_i,b')+\rho(b,b')$.  Otherwise, $\rho(x_i,b)=\epsilon_i\leq \rho(x_i,b')$, and the inequality once again holds.

Finally, consider Case ABA=Case BAA.  We need to show that $\rho(x_{i_1},b)\leq \rho(x_{i_1},x_{i_2})+\rho(x_{i_2},b)$.  Set $r:=\rho(x_{i_1},x_{i_2})=d(a_{i_1},a_{i_2})$.  First suppose that $\rho(x_{i_1},b)=d(a_{i_1},b)$.  Then $\rho(x_{i_1},b)=d(a_{i_1},b)\leq r+d(a_{i_2},b)\leq r+\rho(x_{i_2},b)$.  Now suppose that $\rho(x_{i_1},b)=\epsilon_i$.  To handle this case, we need to first observe that $\epsilon_{i_1}\leq r+\epsilon_{i_2}$.  Indeed, let $c\in C$ be arbitrary.  Then $d(a_{i_1},c)\leq r+d(a_{i_2},c)$.  It follows that $d(a_{i_1},c)\leq d+\epsilon_{i_2}$.  Since $\epsilon_{i_1}\leq d(a_{i_1},c)$, we have that $\epsilon_{i_1}\leq r+\epsilon_{i_2}$.  But now $\rho(x_{i_1},b)=\epsilon_{i_1}\leq r+\epsilon_{i_2}\leq r+\rho(x_{i_2},b)$.  This finishes the proof of this case as well as the proof of the claim.

\

By the Claim, we can find $A'\models \Sigma(X)$.  We claim that this $A'$ is as desired.  Indeed, suppose that $e\in (A'\cap B)\setminus C$.  Let $i\in I$ be such that $e$ corresponds to $x_i$.  Then $0=d(e,e)=d(x_i,e)\geq \epsilon_i>0$, a contradiction.           
\end{proof}

\

\begin{thm}\label{T:urysohnrealrosy}
$T_\U$ is real rosy.
\end{thm}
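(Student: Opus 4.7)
My plan is to combine the preceding lemma (which yields $\thind = \ind[M]$ on real sorts in $T_\U$) with the explicit description
\[
A \ind[M]_C B \Leftrightarrow \overline{A} \cap \overline{B} \subseteq \overline{C}
\]
derived above from Henson's computation of $\acl$. Under this characterization, real rosiness reduces to an elementary observation about separable metric subspaces of $\UU$.

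Concretely, fix a countable real tuple $a$ (viewed as its countable set of coordinates) and any small real parameter set $B \subseteq \UU$. By the characterization, it suffices to produce a countable $C \subseteq B$ with $\overline{a} \cap \overline{B} \subseteq \overline{C}$. Since $a$ is countable, the closure $\overline{a}$ is separable, hence so is the intersection $F := \overline{a} \cap \overline{B}$. Choose a countable dense $D \subseteq F$. For each $d \in D$, the condition $d \in \overline{B}$ lets us pick a countable $B_d \subseteq B$ with $d \in \overline{B_d}$. Setting $C := \bigcup_{d \in D} B_d$ gives $|C| \leq \aleph_0$, $D \subseteq \overline{C}$, and by density $F \subseteq \overline{D} \subseteq \overline{C}$, as required. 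The same argument applied to an arbitrary real set $A$, starting from a dense subset of $\overline{A}$ of size $|A| + \aleph_0$, yields local character with bound $\kappa(A) = |A| + \aleph_0$.

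There is no real obstacle here --- the substantive work was carried out in the preceding lemma (extension for $\ind[M]$, via quantifier elimination and a careful metric amalgamation) and in Henson's proof that $\acl$ coincides with metric closure. The only subtlety worth flagging is that this argument gives countable rather than finite character, which is the appropriate notion for the geometric approach in the continuous setting, as emphasized in Section 3.
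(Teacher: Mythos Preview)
Your proof is correct and follows essentially the same route as the paper: reduce to local character of $\ind[M]$ via the preceding lemma, use the characterization $A\ind[M]_C B \Leftrightarrow \overline{A}\cap\overline{B}\subseteq\overline{C}$, and for each point of the intersection choose a countable approximating subset of $B$. The only cosmetic difference is that you first pass to a dense subset of $\overline{A}\cap\overline{B}$ of size $|A|+\aleph_0$, which yields the slightly sharper bound $\kappa(A)=|A|+\aleph_0$ in place of the paper's $|\overline{A}|\cdot\aleph_0$; this is a harmless refinement, not a different argument.
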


\begin{proof}
We must show that $\thind$ satisfies local character.  By the previous lemma, this amounts to showing that $\ind[M]\ $ satisfies local character.  Let $A$ and $B$ be small subsets of $\UU$.  For each $x\in \overline{A}\cap \overline{B}$, let $B_x\subseteq B$ be countable such that $x\in \overline{B_x}$.  Let $C:=\bigcup \{B_x \ | \ x\in \overline{A}\cap \overline{B}\}$.  Then $\overline{A}\cap \overline{B}\subseteq \overline{C}$, i.e. $A\ind[M]_CB$.  Since $|C|\leq |\overline{A}|\cdot \aleph_0$, $\ind[M]\ $ has local character.
\end{proof}

\begin{cor}\label{T:urysohnrealsuperrosy}
$T_\U$ is real superrosy.
\end{cor}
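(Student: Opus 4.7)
The plan is to reduce the statement to the concrete characterization of independence already in hand: in $T_\U$ (restricted to the real sorts), the previous lemma gives $\thind\, =\, \ind[M]\,$, and the displayed computation just before that lemma gives
\[
A\ind[M]_C B \ \Longleftrightarrow\ \overline{A}\cap\overline{B}\subseteq \overline{C}.
\]
So to verify real superrosiness, given a finite real tuple $a=(a_1,\dots,a_n)$ from $\UU$, a small real parameter set $B\subseteq\UU$, and $\epsilon>0$, it will suffice to produce a finite real tuple $c=(c_1,\dots,c_n)$ with $d(a,c)<\epsilon$ and a finite $B_0\subseteq B$ satisfying $\{c_1,\dots,c_n\}\cap\overline{B}\subseteq \overline{B_0}$.

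My construction is componentwise. For each $i\in\{1,\dots,n\}$, I would set
\[
c_i :=
\begin{cases}
\text{some $b\in B$ with $d(a_i,b)<\epsilon$}, & \text{if } a_i\in\overline{B},\\
a_i, & \text{if } a_i\notin\overline{B};
\end{cases}
\]
the first case is available precisely because $a_i\in\overline{B}$ puts $a_i$ in the closure of $B$. Let $c:=(c_1,\dots,c_n)$ and let $B_0$ be the set of those $c_i$ produced under the first clause, a finite subset of $B$. Since the metric on finite tuples is the maximum of the coordinate distances, $d(a,c)<\epsilon$, and $c$ is similar to $a$ (it is again a finite real tuple of the same length).

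It remains to check that $c\thind_{B_0}B$. By the lemma, this is $c\ind[M]_{B_0}B$, i.e.\ $\{c_1,\dots,c_n\}\cap\overline{B}\subseteq\overline{B_0}$. If $c_i\in\overline{B}$, then by construction $c_i\neq a_i$ would put $c_i\in B_0\subseteq\overline{B_0}$ immediately, while the alternative $c_i=a_i$ is impossible because in that case $a_i\notin\overline{B}$. So each such $c_i$ lies in $B_0$, as required.

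There is no real obstacle: the work is carried by Theorem \ref{T:urysohnrealrosy} and the characterization of $\ind[M]\,$ in $\UU$, and the only nuance is to simultaneously arrange similarity of $c$ to $a$, finiteness of $B_0$, and the $\epsilon$-closeness $d(a,c)<\epsilon$. The componentwise perturbation above handles all three at once, and yields real superrosiness of $T_\U$.
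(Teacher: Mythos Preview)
Your proof is correct and essentially identical to the paper's: both perturb each coordinate $a_i$ to a nearby $c_i\in B$ when $a_i\in\overline{B}=\acl(B)$ and leave it alone otherwise, take $B_0$ to be the finitely many $c_i$'s landing in $B$, and conclude $c\thind_{B_0}B$ via the characterization $\overline{c}\cap\overline{B}\subseteq\overline{B_0}$. The only cosmetic difference is that you spell out the verification of the independence condition, whereas the paper simply asserts it.
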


\begin{proof}
Let $a\in \UU^n$, $B\subseteq \UU$ small, and $\epsilon>0$.  Write $a=(a_1,\ldots,a_n)$.  Fix $i\in \{1,\ldots,n\}$.  If $a_i\in \acl(B)=\overline{B}$, set $c_i$ to be an element of $B$ such that $d(a_i,c_i)<\epsilon$.  If $a_i\notin \acl(B)$, set $c_i:=a_i$.  Let $c=(c_1,\ldots,c_n)\in \m^n$.  Let $B_0=\{c_1,\ldots,c_n\}\cap B$.  Then $c\thind_{B_0}B$, finishing the proof of the corollary.  
\end{proof}

\

In order to prove that $T_\U$ has weak elimination of finitary imaginaries, we will need the following fact due to Julien Melleray.

\begin{fact}[\cite{Melleray}]\label{L:julien}
Let $A$ and $B$ be finite subsets of $\UU$.  Let $G:=\operatorname{Aut}(\UU | A\cap B)$ and $H:=$ the subgroup of $G$ generated by $\operatorname{Aut}(\UU|A) \cup \operatorname{Aut}(\UU|B)$.  Then $H$ is dense in $G$ with respect to the topology of pointwise convergence.
\end{fact}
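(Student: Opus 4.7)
The plan is to verify the density condition directly. Given $f\in G$, a finite $F\subseteq\UU$, and $\epsilon>0$, I must produce $h\in H$ with $d(h(x),f(x))<\epsilon$ for all $x\in F$. After enlarging $F$ if necessary, I may assume that $A\cup B\cup f(A)\cup f(B)\subseteq F$; set $D:=A\cap B$, so that $f|_D=\operatorname{id}_D$.

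The argument is an approximate back-and-forth exploiting two features of $T_\U$: ultrahomogeneity (Fact 5.2(3)), which lets any finite partial isometry of $\UU$ extend to an element of $\Aut(\UU)$; and the strong amalgamation property of the Fra\"iss\'e class of finite metric spaces of diameter $\leq 1$, which allows two such spaces $E_1,E_2$ sharing a common sub-metric-space $D$ to be amalgamated inside $\UU$ by assigning the ``free'' distances $d(e_1,e_2):=\min(d(e_1,D)+d(e_2,D),1)$ for $e_i\in E_i\setminus D$.

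The back-and-forth then proceeds in rounds. In the first round, use amalgamation plus ultrahomogeneity to produce $\sigma_1\in\Aut(\UU|A)$ moving $B$ to a generic isometric copy $B^*$ of $B$ over $D$; here ``generic'' means $B^*$ is disjoint from $A\cup f(A)\cup F\cup f(F)$ outside $D$, with the free amalgamation distances to everything else. Because of this genericity, the partial isometry $\operatorname{id}_{B^*}\cup f|_A$ is actually a partial isometry of $\UU$ (each $b^*$-to-$a$ and $b^*$-to-$f(a)$ distance is determined by the $D$-distances, and these agree since $f|_D=\operatorname{id}_D$), so by ultrahomogeneity it extends to some $\tau\in\Aut(\UU)$ fixing $B^*$ pointwise. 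The conjugate $\sigma_1^{-1}\tau\sigma_1$ lies in $\Aut(\UU|B)\subseteq H$, and sends $a\in A$ to $\sigma_1^{-1}(f(a))$, which is within a controlled error of $f(a)$ (the error arising because $\sigma_1$ does not exactly fix $f(A)$). A symmetric round with the roles of $A$ and $B$ reversed handles the $B$-part of $f$, and a final adjustment using one further conjugate handles the points of $F\setminus(A\cup B)$.

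The main obstacle I expect is the error bookkeeping: at each round the conjugation by $\sigma_1$ (or its analogue) introduces a small but non-zero discrepancy between $f$ and its generic approximation, and these discrepancies accumulate when the rounds are composed. Choosing the $\delta$-thresholds for genericity in advance, small enough and uniformly across the finitely many points of $F$, so that the final composition is within $\epsilon$ of $f$ on all of $F$, is the technical heart of the proof. In the discrete Fra\"iss\'e setting, such as the random graph or the pure infinite set, the analogous density theorem has an exact-equality proof with no such accumulation; it is the continuous nature of $T_\U$ that requires the quantitative analysis.
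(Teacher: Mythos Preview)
The paper does not prove this statement: it is quoted as a Fact with a citation to Melleray's preprint, and is then used as a black box in the proof of Lemma~\ref{L:urysohnWEFI}. So there is no ``paper's own proof'' to compare against.

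Your sketch has the right ingredients (ultrahomogeneity, free amalgamation, conjugation tricks), but there is a genuine structural gap rather than merely an error-bookkeeping issue. You want $\sigma_1\in\Aut(\UU\mid A)$ to carry $B$ to a copy $B^*$ which is in free-amalgamation position over $D=A\cap B$ with respect to $A\cup f(A)\cup F\cup f(F)$. But since $\sigma_1$ fixes $A$ pointwise, for every $b\in B$ and $a\in A$ one has
\[
d(\sigma_1(b),a)=d(b,\sigma_1^{-1}(a))=d(b,a),
\]
so $B^*=\sigma_1(B)$ inherits exactly the \emph{original} $B$-to-$A$ distances, not the free-amalgamation ones. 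Thus the premise ``each $b^*$-to-$a$ distance is determined by the $D$-distances'' is false in general: $d(b^*,a)=d(b,a)$ is fixed by the ambient configuration, whereas $d(b^*,f(a))$ can be made equal to the free-amalgamation value $\min\bigl(1,\min_{c\in D}(d(b^*,c)+d(c,f(a)))\bigr)$. These two quantities have no reason to agree, so $\operatorname{id}_{B^*}\cup f|_A$ is typically \emph{not} a partial isometry, and the automorphism $\tau$ you need does not exist. The same obstruction shows that the later ``controlled error'' is not actually controlled: nothing in the construction bounds $d\bigl(\sigma_1^{-1}(f(a)),f(a)\bigr)$.

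Melleray's argument (and the analogous arguments for discrete Fra\"iss\'e limits with free amalgamation) avoids this by moving the \emph{target} configuration rather than $B$: one first uses an element of $\Aut(\UU\mid A)$ to push $F\setminus A$ and $f(F)\setminus A$ into free position over $A$ relative to $B$, and only then invokes an element of $\Aut(\UU\mid B)$. With that order of operations the relevant cross-distances really are the free-amalgamation ones, and the partial isometry one needs does exist. If you want to reconstruct the proof, that reordering is the key correction.
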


\begin{lemma}\label{L:urysohnWEFI}
$T_\U$ has weak elimination of finitary imaginaries.
\end{lemma}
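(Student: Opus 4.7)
The plan is to apply Lemma \ref{L:lascar}, so we need to verify its two hypotheses. Condition (1) is immediate in $T_\U$: since $\acl(A)=\overline{A}$ and every finite set is automatically closed, the real algebraic closure of a finite set equals that very finite set. A strictly descending chain of such closures is therefore a strictly descending chain of finite sets, which must terminate.

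The substantive part is condition (2). Suppose $A$ and $B$ are finite subsets of $\UU$ (so $\acl(A)=A$ and $\acl(B)=B$) and $\varphi(x,a)$ is a finitary definable predicate that is defined over $A$ and also over $B$. Equivalently, $\varphi(x,a)$ is fixed (as a predicate in $x$) by every element of $\operatorname{Aut}(\UU|A)$ and by every element of $\operatorname{Aut}(\UU|B)$. Since the property of fixing a predicate is preserved under composition and inversion of automorphisms, the whole subgroup $H$ generated by $\operatorname{Aut}(\UU|A)\cup \operatorname{Aut}(\UU|B)$ fixes $\varphi(x,a)$. We want to conclude that every $\sigma\in G:=\operatorname{Aut}(\UU|A\cap B)$ also fixes $\varphi(x,a)$, for then $\varphi(x,a)$ is defined over $A\cap B$.

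This is exactly where Fact \ref{L:julien} enters: $H$ is dense in $G$ in the topology of pointwise convergence. Given $\sigma\in G$, choose a net $(\sigma_\alpha)$ in $H$ with $\sigma_\alpha\to\sigma$ pointwise; in particular $\sigma_\alpha(a)\to\sigma(a)$ in the finite product metric on tuples of length $|a|$. Because $\varphi(x,y)$ is a \emph{finitary} definable predicate, it is uniformly continuous in the finite parameter tuple $y$, so for each fixed $x\in\UU$ we have $\varphi(x,\sigma_\alpha(a))\to\varphi(x,\sigma(a))$. But $\varphi(x,\sigma_\alpha(a))=\varphi(x,a)$ for every $\alpha$ by $H$-invariance, and the limit of a constant net is that constant, so $\varphi(x,a)=\varphi(x,\sigma(a))$ for every $x\in\UU$. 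Hence $\sigma$ fixes $\varphi(x,a)$, and condition (2) of Lemma \ref{L:lascar} is verified.

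The main (and really only) delicate step is the continuity argument at the end: one has to know that pointwise convergence of the finite parameter tuples genuinely transfers the invariance across Melleray's density. This is the point at which it is essential to be working with \emph{finitary} imaginaries, since the metric on finite tuples agrees with the product metric and a definable predicate with finitely many parameter variables is uniformly continuous in those variables. If one tried to push the same argument through for infinitary definable predicates one would have to be considerably more careful about the modulus of uniform continuity, but in the finitary setting the reduction is painless and Lemma \ref{L:lascar} delivers weak elimination of finitary imaginaries.
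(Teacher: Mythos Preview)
Your proof is correct and follows essentially the same approach as the paper's: both verify the two hypotheses of Lemma~\ref{L:lascar}, with (1) immediate from triviality of $\acl$ and (2) obtained by combining Melleray's density Fact~\ref{L:julien} with continuity of the definable predicate. The only cosmetic difference is that the paper applies the automorphisms to a fixed \emph{object} tuple $c\in\UU_x$ and uses continuity of $\varphi$ in $x$, whereas you apply them to the \emph{parameter} tuple $a$ and use uniform continuity in $y$; these are equivalent since an automorphism $\tau$ fixes the predicate $\varphi(x,a)$ iff $\varphi(\tau(c),a)=\varphi(c,a)$ for all $c$ iff $\varphi(x,\tau(a))=\varphi(x,a)$ for all $x$.
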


\begin{proof}
We verify properties (1) and (2) of Lemma \ref{L:lascar} for $T_\U$.  Since real algebraic closures of finite subsets of $\UU$ are finite, property (1) is clear.  We now verify (2).  Let $A$ and $B$ be finite subsets of $\UU$.  Let $\varphi(x)$ be a finitary definable predicate which is defined over $A$ and defined over $B$.  We must show that $\varphi(x)$ is defined over $A\cap B$.  Once again, let $G=\operatorname{Aut}(\UU | A\cap B)$ and $H=$ the subgroup of $G$ generated by $\operatorname{Aut}(\UU|A)\cup \operatorname{Aut}(\UU|B)$.  Fix $a\in \UU_x$.  Note that if $\tau\in H$, then $\varphi(\tau(a))=\varphi(a)$.  Now suppose that $\tau \in G$.  By Fact \ref{L:julien}, there is a sequence $(\tau_i \ | \ i<\omega)$ from $H$ such that $\tau_i(a)\to \tau(a)$.  Since $\varphi$ is continuous, we have $\varphi(\tau(a))=\lim \varphi(\tau_i(a))=\varphi(a)$.  Since $a$ was arbitrary, this shows that $\varphi(\tau(x))\equiv \varphi(x)$.  Since $\tau\in G$ was arbitrary, we have that $\varphi(x)$ is defined over $A\cap B$, completing the proof of the lemma.   
\end{proof}

\begin{cor}
$T_\U$ is rosy with respect to finitary imaginaries.
\end{cor}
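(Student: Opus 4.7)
The plan is simply to assemble the three pieces of machinery that the paper has just built up. Theorem \ref{T:weakrosy} tells us that any continuous theory which is both real rosy and admits weak elimination of finitary imaginaries is automatically rosy with respect to finitary imaginaries. So the proof reduces to verifying these two hypotheses for $T_\U$, and both have been established moments earlier.

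First, I would cite Theorem \ref{T:urysohnrealrosy} for the real rosiness of $T_\U$; recall that this was obtained by checking $\ind[M]$ has local character, using the concrete description $A \ind[M]_C B \Leftrightarrow \overline{A} \cap \overline{B} \subseteq \overline{C}$ together with the fact that each point of $\overline{A} \cap \overline{B}$ is the limit of a countable subset of $B$. Second, I would cite Lemma \ref{L:urysohnWEFI} for weak elimination of finitary imaginaries in $T_\U$, which was proved through Lemma \ref{L:lascar} and Melleray's density result Fact \ref{L:julien}.

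Having both hypotheses in hand, the corollary follows from a direct application of Theorem \ref{T:weakrosy}. There is no real obstacle here: the work was all done in Theorem \ref{T:weakrosy} (extracting a bound $\kappa(A)$ from the bound $\kappa(B)$ obtained via real rosiness, by passing through weak codes $l(\cdot)$ and invoking Lemmas \ref{L:reduction} and \ref{L:imagreal}), and the current corollary is merely the specialization of that general result to $T = T_\U$. Thus the proof can be presented in a single line, simply combining the three earlier results.
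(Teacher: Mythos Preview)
Your proposal is correct and matches the paper's proof exactly: the paper likewise derives the corollary in one line by combining Theorem \ref{T:weakrosy}, Theorem \ref{T:urysohnrealrosy}, and Lemma \ref{L:urysohnWEFI}.
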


\begin{proof}
This is immediate from Theorem \ref{T:weakrosy}, Theorem \ref{T:urysohnrealrosy}, and Lemma \ref{L:urysohnWEFI}.
\end{proof}

\begin{cor}
$T_{\U}$ is superrosy with respect to finitary imaginaries.
\end{cor}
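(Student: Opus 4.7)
The plan is to invoke Corollary \ref{T:weaksuperrosy} directly. That corollary states that if a theory $T$ is real superrosy and admits weak elimination of finitary imaginaries, then $T$ is superrosy with respect to finitary imaginaries. Both hypotheses have already been verified for $T_\U$ earlier in this section: real superrosiness of $T_\U$ is the content of Corollary \ref{T:urysohnrealsuperrosy}, and weak elimination of finitary imaginaries is Lemma \ref{L:urysohnWEFI}.

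Concretely, I would simply write: by Corollary \ref{T:urysohnrealsuperrosy}, $T_\U$ is real superrosy; by Lemma \ref{L:urysohnWEFI}, $T_\U$ has weak elimination of finitary imaginaries; hence, by Corollary \ref{T:weaksuperrosy}, $T_\U$ is superrosy with respect to finitary imaginaries. There are no further steps to carry out and no obstacles, since all the heavy lifting — transferring a real-sort local-character-plus-approximation property to the finitary imaginary sorts via weak codes $l(\cdot)$, together with the preservation lemmas \ref{L:reduction} and \ref{L:imagreal} — has already been packaged into Corollary \ref{T:weaksuperrosy}.

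If I were writing this out in a self-contained manner rather than citing the corollary, the structure would still be the same: given $a\in \UU^{\feq}$, a small $B\subseteq \UU^{\feq}$, and $\epsilon>0$, lift $a$ to a real tuple $a'$ with $\pi(a')=a$, choose $\delta>0$ so that $d(x,y)<\delta$ forces $d(\pi(x),\pi(y))<\epsilon$ (using uniform continuity of the projection onto the relevant $\feq$-sort), apply real superrosiness to obtain a real $c'$ within $\delta$ of $a'$ with $c'\thind_{C} l(B)$ for some finite $C\subseteq l(B)$, use base monotonicity to arrange $C=l(B_0)$ for a finite $B_0\subseteq B$, then transfer to the $\feq$-side via Lemma \ref{L:imagreal} and Lemma \ref{L:reduction} to get $\pi(c')\thind_{B_0}B$, and observe $d(a,\pi(c'))<\epsilon$. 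But since this is exactly the proof of Corollary \ref{T:weaksuperrosy}, the cleanest presentation is just to cite that corollary together with the two preceding Urysohn-specific results.
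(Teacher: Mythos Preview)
Your proposal is correct and matches the paper's proof exactly: the paper simply cites Corollary \ref{T:weaksuperrosy}, Corollary \ref{T:urysohnrealsuperrosy}, and Lemma \ref{L:urysohnWEFI}.
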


\begin{proof}
This is immediate from Corollary \ref{T:weaksuperrosy}, Corollary \ref{T:urysohnrealsuperrosy}, and Lemma \ref{L:urysohnWEFI}.
\end{proof}

We end this section with an application of the fact that $T_\U$ is real rosy.  For $p\in S(A)$, one defines $U^\th(p)$ as in classical model theory.  If $X$ is an $A$-definable set, one defines $U^\th(X):=\sup \{U^\th(a/A)\ | \ a\in X\}$.  
If $U^\th(X)<\omega$, then there is $a\in X$ such that $U^\th(X)=U^\th(a/A)$.  The Lascar inequalities for $U^\th$-rank also hold in this context.

\begin{prop}\label{L:Urank}
Suppose $f:\m^n\to \m$ is an injective $A$-definable function, where $A\subseteq \m^{\eq}$ is small.  Suppose that $U^\th(\m)<\omega$.  Then $U^\th(\m^n)\leq U^\th(f(\m^n))$.
\end{prop}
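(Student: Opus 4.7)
The plan is to pick $a\in \m^n$ realising the supremum $U^\th(\m^n)$, show that $U^\th(a/A)=U^\th(f(a)/A)$, and observe that since $f(a)\in f(\m^n)$ this yields the desired inequality $U^\th(\m^n)\leq U^\th(f(\m^n))$. The two main ingredients are the Lascar inequalities for $U^\th$-rank stated just above the proposition, and the fact that an injective $A$-definable function $f$ makes $a$ and $f(a)$ interalgebraic over $A$.

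First I would verify $U^\th(\m^n)<\omega$, so that the supremum in its definition is actually attained. For any $a=(a_1,\ldots,a_n)\in \m^n$, iterated use of the upper Lascar inequality yields $U^\th(a/A)\leq U^\th(a_1/A)\oplus\cdots\oplus U^\th(a_n/A)$; each summand is bounded by $U^\th(\m)<\omega$, and since Cantor sum agrees with ordinary addition on finite ordinals, $U^\th(a/A)\leq n\cdot U^\th(\m)<\omega$. Hence $U^\th(\m^n)<\omega$, and by the remark immediately preceding the proposition one may fix $a\in \m^n$ with $U^\th(a/A)=U^\th(\m^n)$.

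The core step is then to compute $U^\th(af(a)/A)$ in two different ways. Since $f$ is $A$-definable, $f(a)\in \dcl(Aa)$ and so $U^\th(f(a)/Aa)=0$; applying the Lascar inequalities to the pair $(f(a),a)$ forces $U^\th(af(a)/A)=U^\th(a/A)$. Conversely, injectivity of the $A$-definable $f$ ensures that the zeroset of the $Af(a)$-definable predicate $d(f(x),f(a))$ is the singleton $\{a\}$, so $\tp(a/Af(a))$ has a unique realisation and $U^\th(a/Af(a))=0$; applying the Lascar inequalities to $(a,f(a))$ now yields $U^\th(af(a)/A)=U^\th(f(a)/A)$. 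Combining these we get $U^\th(\m^n)=U^\th(a/A)=U^\th(f(a)/A)\leq U^\th(f(\m^n))$, as required. The main (mild) subtlety is this last interdefinability step, where the injectivity hypothesis enters precisely to cut the zeroset of $d(f(x),f(a))$ down to a single point; everything else is a bookkeeping application of the Lascar inequalities.
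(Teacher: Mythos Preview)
Your proof is correct and follows essentially the same approach as the paper's: pick $a$ attaining $U^\th(\m^n)$, use interdefinability of $a$ and $f(a)$ over $A$ together with the Lascar inequalities to get $U^\th(a/A)=U^\th(af(a)/A)=U^\th(f(a)/A)$, and conclude. You add two details the paper leaves implicit---the verification that $U^\th(\m^n)<\omega$ so the supremum is attained, and the explicit justification (via the zeroset of $d(f(x),f(a))$) that injectivity yields $a\in\dcl(Af(a))$---but the argument is the same.
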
    

\begin{proof}
Let $a\in \m^n$ be such that $U^\th(\m^n)=U^\th(a/A)$.  Let $b:=f(a)$.  Then since $a$ and $b$ are interdefinable over $A$, we have, by the Lascar inequalities, that $U^\th(b/A)=U^\th(ab/A)=U^\th(a/A)$.  Consequently, we see that $U^\th(\m^n)=U^\th(a/A)=U^\th(b/A)\leq U^\th(f(\m^n))$.    
\end{proof}

Define $U^\th_{\real}$ and $U^\th_{\feq}$ to be the foundation rank of $\thind$ when restricted to the real sorts and finitary imaginary sorts respectively.  The previous proposition continues to hold when $U^\th$ is replaced by $U^\th_{\real}$ or $U^\th_{\feq}$.

\begin{lemma}
For each $n>0$, we have $U^\th_{\real}(\UU^n)=n$.
\end{lemma}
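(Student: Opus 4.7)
The plan is to prove the two inequalities $U^\th_{\real}(\UU^n)\leq n$ and $U^\th_{\real}(\UU^n)\geq n$ separately, using the concrete description of $\thind$ in $T_\U$ established earlier, namely $A\ind[M]_C B \Leftrightarrow \overline{A}\cap \overline{B}\subseteq \overline{C}$ together with $\ind[M]=\thind$, and the fact $\acl(A)=\overline{A}$.

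For the upper bound, I would first reduce to the single-element case. Fix $a\in \UU$ and a small real parameter set $A$. I claim $U^\th_{\real}(a/A)\leq 1$. Indeed, if $B\supseteq A$ is any parameter set with $a\nthind_A B$, then $\overline{\{a\}}\cap \overline{B}\not\subseteq \overline{A}$; since $\overline{\{a\}}=\{a\}$, this forces $a\in \overline{B}=\acl(B)$, and hence $U^\th_{\real}(a/B)=0$. Thus any thorn-forking extension of $\tp(a/A)$ has rank $0$, giving $U^\th_{\real}(a/A)\leq 1$. Applying the Lascar inequalities iteratively to an arbitrary $a=(a_1,\dots,a_n)\in \UU^n$ over $\emptyset$ now yields
\[
U^\th_{\real}(a/\emptyset)\leq U^\th_{\real}(a_1/\emptyset)+U^\th_{\real}(a_2/a_1)+\cdots+U^\th_{\real}(a_n/a_1\cdots a_{n-1})\leq n,
\]
so $U^\th_{\real}(\UU^n)\leq n$.

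For the lower bound, I would exhibit a specific tuple $a=(a_1,\dots,a_n)\in \UU^n$ with pairwise distinct coordinates and an explicit chain of thorn-forking extensions. Set $A_k:=\{a_1,\dots,a_k\}$ for $0\leq k\leq n$, and show by downward induction on $k$ that $U^\th_{\real}(a/A_k)\geq n-k$. The base case $k=n$ is immediate since $a\in \acl(A_n)$. For the inductive step, it suffices to verify that $\tp(a/A_{k+1})$ thorn-forks over $A_k$, i.e.\ that $a\nthind_{A_k} A_{k+1}$; equivalently, $\overline{\{a_1,\dots,a_n\}}\cap \overline{A_{k+1}}\not\subseteq \overline{A_k}$. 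But $a_{k+1}$ lies in both sides of the intersection yet, by pairwise distinctness, $a_{k+1}\notin A_k=\overline{A_k}$, so the non-inclusion holds. Taking $k=0$ gives $U^\th_{\real}(\UU^n)\geq U^\th_{\real}(a/\emptyset)\geq n$.

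The two arguments together give equality. I do not anticipate a serious obstacle: the main subtlety is ensuring the single-element computation is correctly grounded in the characterization $\bar A\cap \bar B\subseteq \bar C$, and that the Lascar inequalities (stated in the paper as holding in this context) are applied over the right base. The pairwise-distinct tuple realizing the lower bound exists trivially in $\UU$, so no saturation or existence argument is required beyond what is already available.
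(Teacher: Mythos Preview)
Your proof is correct. The upper bound argument is essentially the same as the paper's: both reduce to the single-element bound $U^\th_{\real}(a/A)\leq 1$ via the characterization of $\thind$ and then apply the Lascar inequalities (the paper organizes this as an induction on $n$, but the content is identical).

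The lower bound, however, is handled differently. The paper proceeds by induction: having $U^\th_{\real}(\UU^{n-1})=n-1$, it picks $b\in\UU$ independent from a witness $a\in\UU^{n-1}$ and invokes the Lascar \emph{equality} $U^\th_{\real}(ab)=U^\th_{\real}(a)+U^\th_{\real}(b)$ over independent elements. You instead build an explicit thorn-forking chain $A_0\subsetneq A_1\subsetneq\cdots\subsetneq A_n$ directly from the definition of $U^\th$-rank, checking $a\nthind_{A_k}A_{k+1}$ via the criterion $\overline{\{a_1,\dots,a_n\}}\cap\overline{A_{k+1}}\not\subseteq\overline{A_k}$. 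Your route is more elementary, since it avoids the additivity direction of the Lascar inequalities and uses only the recursive definition of the rank; the paper's route is shorter to write once the Lascar equality is in hand. Both are perfectly valid.

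One cosmetic point: in your base case $k=n$ you justify $U^\th_{\real}(a/A_n)\geq 0$ by noting $a\in\acl(A_n)$, but of course any consistent type has rank $\geq 0$; the algebraicity is not needed there.
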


\begin{proof}
We prove this by induction on $n$.  First suppose that $n=1$.  Let $p$ be the unique element of $S_1(\emptyset)$.  Since $p$ is consistent, we have $U^\th(p)\geq 0$.  Let $a\models p$.  Since $\tp(a/a)$ $\th$-forks over $\emptyset$, we see that $U^\th_{\real}(p)\geq 1$.  Suppose $U^\th_{\real}(p)\geq 2$.  Then there would be $b$ and $A$ such that $U^\th_{\real}(b/A)\geq 1$ and $\tp(b/A)$ $\th$-forks over $\emptyset$, i.e. $b\in \bar{A}$.  Since $b\in \acl(A)$, $\tp(b/A)$ cannot have a $\th$-forking extension, contradicting $U^\th_{\real}(b/A)\geq 1$.  Thus $U^\th_{\real}(p)=1$ for the unique type in $S_1(\emptyset)$, whence $U^\th(\UU)=1$.  Now suppose that $n>1$.  Let $a\in \UU^{n-1}$ be such that $U^\th_{\real}(\UU^{n-1})=U^\th_{\real}(a/\emptyset)$.  Let $b\in \UU$ be such that $b$ does not equal any of the coordinates of $a$.  Then $a\thind b$, so by the Lascar inequalities, $U^\th_{\real}(ab)=U^\th_{\real}(a)+U^\th_{\real}(b)=(n-1)+1=n$.  It follows that $U^\th_{\real}(\UU^n)\geq n$.  However, for any $c\in \UU^{n-1}$ and $d\in \UU$, we have $U^\th_{\real}(cd)\leq U^\th_{\real}(c/d)\oplus U^\th_{\real}(d)\leq n$, whence $U^\th_{\real}(\UU^n)\leq n$.  Thus, $U^\th_{\real}(\UU^n)=n$.
\end{proof}

\begin{cor}
For each $n>0$, we have $U^\th_{\feq}(\UU^n)=n$.
\end{cor}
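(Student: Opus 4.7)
The plan is to sandwich $U^\th_{\feq}(\UU^n)$ using the previous lemma, which established $U^\th_{\real}(\UU^n) = n$. For the lower bound, I observe that every real parameter set is in particular a subset of $\m^{\feq}$ and that the underlying thorn-forking relation is defined identically in both settings; hence any chain of real extensions witnessing $U^\th_{\real}(a/\emptyset) \geq n$ for some $a \in \UU^n$ also witnesses $U^\th_{\feq}(a/\emptyset) \geq n$. Combined with the previous lemma, this immediately gives $U^\th_{\feq}(\UU^n) \geq n$.

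For the upper bound, I would prove by induction on $\alpha$ the following claim: for every real tuple $a$ and every small $A \subseteq \m^{\feq}$, if $U^\th_{\feq}(a/A) \geq \alpha$, then $U^\th_{\real}(a/l(A)) \geq \alpha$, where $l(A) := \bigcup\{l(e) \mid e \in A\}$ denotes the union of weak codes, available because $T_\U$ has weak elimination of finitary imaginaries by Lemma \ref{L:urysohnWEFI}. The base case $\alpha = 0$ and the limit case are routine. For the successor step, assume $U^\th_{\feq}(a/A) \geq \alpha + 1$: then there is $A' \supseteq A$ in $\m^{\feq}$ with $\tp(a/A')$ thorn-forking over $A$ and $U^\th_{\feq}(a/A') \geq \alpha$. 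By Lemma \ref{L:imagreal}, the forking translates to $a \nthind_{l(A)} l(A')$ in the real sense, and by the inductive hypothesis applied to $A'$, we have $U^\th_{\real}(a/l(A')) \geq \alpha$. Combining these gives $U^\th_{\real}(a/l(A)) \geq \alpha + 1$. Taking $A = \emptyset$ (so that $l(\emptyset) = \emptyset$) yields $U^\th_{\feq}(a/\emptyset) \leq U^\th_{\real}(a/\emptyset) \leq n$ for every $a \in \UU^n$, hence $U^\th_{\feq}(\UU^n) \leq n$.

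The main technical step is the successor case of the induction, where Lemma \ref{L:imagreal} does the real work of translating finitary imaginary thorn-forking into real thorn-forking via weak codes. Once this translation is in hand, the induction itself is straightforward, and since the real rank is bounded by $n < \omega$, only finitely many stages are needed, sidestepping any subtlety at limit ordinals.
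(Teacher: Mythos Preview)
Your approach is essentially the paper's: the paper also reduces to showing $U^\th_{\feq}(a/\emptyset)\leq U^\th_{\real}(a/\emptyset)$ and cites Lemma~\ref{L:imagreal} for this, and your induction on $\alpha$ is exactly the unpacking of that citation. One small imprecision: for the lower bound you assert that the thorn-forking relation is ``defined identically in both settings,'' but this is not literally so, since $\ind[M]$ and hence $\thind$ are defined via $\acl$, and $\acl$ is computed in $\m$ in one case and in $\m^{\feq}$ in the other. The correct (and short) justification is that for real $X$ one has $\acl^{\real}(X)=\acl^{\feq}(X)\cap \m$, from which it follows directly that $A\ind[M]_C B$ in the $\feq$ sense implies $A\ind[M]_C B$ in the real sense whenever $A,B,C$ are real; passing to $\thind$ and taking contrapositives gives that real thorn-forking implies $\feq$ thorn-forking, which is what you need.
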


\begin{proof}
To prove the corollary, it suffices to show that, for any $a\in \UU^n$, we have $U^\th_{\feq}(a/\emptyset)\leq U^\th_{\real}(a/\emptyset)$.  However, this follows immediately from Lemma \ref{L:imagreal}.
\end{proof}

By the universality property of the Urysohn sphere, we have that, for $n>1$, $\U^n$ isometrically embeds into $\U$.  The next corollary shows that this cannot be done \emph{definably}.

\begin{cor}

\

\begin{enumerate}
\item For any $n\geq 2$, there does not exist a definable isometric embedding $f:\U^n\to \U$.
\item For any $n\geq 2$, there does not exist an $A$-definable injective function $f:\U^n\to \U$, where $A\subseteq \U$ is finite..
\end{enumerate}
\end{cor}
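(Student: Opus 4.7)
The plan is to derive a contradiction in both parts by lifting $f$ via the natural extension construction to a function $g:\UU^n\to \UU$ on the monster model, and then applying the $U^\th_{\real}$-variant of Proposition \ref{L:Urank} together with the rank computation $U^\th_{\real}(\UU^n)=n$ established in the preceding lemma.

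Since $T_\U$ is $\aleph_0$-categorical, $\U$ is $\omega$-saturated. For part (1), an isometric embedding $f:\U^n\to \U$ is $A$-definable for some countable $A\subseteq \U$ (separability of $\U$ lets us arrange the parameter set to be countable, since any definable predicate is a uniform limit of formulas and each formula uses only finitely many parameters), and Lemma \ref{L:injectiveextension}(1) then yields a natural extension $g:\UU^n\to \UU$ which is again an isometric embedding, hence injective. For part (2), the finiteness of $A$ in the hypothesis puts us directly in case (b) of Lemma \ref{L:injectiveextension}(2), so the natural extension $g:\UU^n\to \UU$ of $f$ is injective.

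In either case $g$ is an $A$-definable injection $\UU^n\to \UU$. Since $U^\th_{\real}(\UU)=1<\omega$, Proposition \ref{L:Urank} gives
\[ U^\th_{\real}(\UU^n)\leq U^\th_{\real}(g(\UU^n))\leq U^\th_{\real}(\UU)=1, \]
contradicting $U^\th_{\real}(\UU^n)=n\geq 2$. The only nontrivial input beyond the rank proposition and the rank computation is verifying the saturation hypothesis of Lemma \ref{L:injectiveextension}, for which $\omega$-saturation of $\U$ from $\aleph_0$-categoricity is precisely what is needed; everything else is a routine transcription of the standard dimension argument from classical model theory.
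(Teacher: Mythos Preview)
Your proof is correct and follows the same route as the paper: extend $f$ to the monster model via Lemma \ref{L:injectiveextension}, then derive a contradiction from Proposition \ref{L:Urank} and the computation $U^\th_{\real}(\UU^n)=n$. You supply slightly more detail than the paper does---in particular, you spell out why $\U$ is $\omega$-saturated (via $\aleph_0$-categoricity) and why in part (1) the parameter set may be taken countable---but the argument is otherwise identical.
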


\begin{proof}
In either case, if such an $f$ existed, then by Lemma \ref{L:injectiveextension}, the natural extension $g:\UU^n\to \UU$ of $f$ to $\UU^n$ would be injective.  By Proposition \ref{L:Urank}, we would have $$n=U^\th_{\real}(\UU^n)\leq U^\th_{\real}(g(\UU^n))\leq U^\th_{\real}(\UU)=1,$$ a contradiction.
\end{proof}

Ward Henson has a more elementary proof that, assuming $\kappa(\UU)>2^{\aleph_0}$, there can be no definable, injective function $f:\UU^n\to \UU$ for any $n\geq 2$.  It suffices to treat the
case $n=2$, as if $n>2$, we specify the extra coordinates arbitrarily in $\UU$,
getting a definable injective function $\UU^2 \to \UU$.  Let $A$ be a closed separable set on which $f$ is definable.  For any $a \in \UU^2$, we have $f(a)\in \dcl(Aa)=Aa$.  So, if $f(a)\notin A$, then $f(a)$ equals one of the coordinates of $a$.  Since $f$ is injective, $|f^{-1}(A)|\leq 2^{\aleph_0}$.  Let $S$ be a continuum sized subset of $\UU$ such that $f^{-1}(A)\subseteq S^2$.  Then on $(\UU\setminus S)^2$, the function $f$ is always equal to one of its coordinates.  Let $F\subseteq \UU\setminus S$ have cardinality $2$.  Then $|f(F^2)|\leq 2$.  However, since $f$ is injective, $|f(F^2)|=4$.  This contradiction proves that such an $f$ could not exist.

\section{Other Notions of Thorn-forking}

In this section, we discuss other natural ways of defining thorn-forking in continuous logic and show that they also yield well-behaved independence relations.  Throughout this section, we work in $\m^{\eq}$.

\begin{df}
Let $\varphi(x,y)$ be a formula.
\begin{enumerate}
\item We say that $\varphi(x,c)$ \textbf{maximally strongly divides over $B$} if it strongly $\chi(c/B)$-divides over $B$.
\item We say that $\varphi(x,c)$ \textbf{maximally $\th$-divides over $B$} if there is $D\supseteq B$ so that $\varphi(x,c)$ maximally strongly divides over $D$.
\item We say that the partial type $\pi(x)$ (in possibly infinitely many variables) \textbf{maximally $\th$-forks over $B$} if there is a cardinal $\lambda<\kappa(\m)$ and formulae $\varphi_i(x,c_i)$, $i<\lambda<\kappa(\m)$, such that each $\varphi_i(x,c_i)$ maximally $\th$-divides over $B$ and such that $$\z(\pi(x))\subseteq \bigcup_{i<\lambda} \z(\varphi_i(x,c_i)).$$
\item We say that $A\ind[m\th]_C B$ if $\tp(A/BC)$ does not maximally $\th$-fork over $C$.
\item We say that $\varphi(x,c)$ \textbf{maximally strongly divides over $B$ in the na\"ive sense} if it strongly $\chi(c/B)$-divides over $B$ in the na\"ive sense.  One can then define \textbf{maximally $\th$-dividing in the na\"ive sense} and \textbf{maximally $\th$-forking in the na\"ive sense} in the obvious way.
\end{enumerate}
\end{df}

\begin{lemma}\label{L:na\"ive}
For every $A,B,C$, we have $A\ind[m\th]_C B$ if and only $A\ind[m\th]_C B$ in the na\"ive sense.
\end{lemma}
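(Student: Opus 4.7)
The plan is to prove the two directions separately. One direction is essentially free: any formula $\varphi(x,c)$ that strongly $\epsilon$-$k$-divides over $B$ in the standard sense, i.e.\ with $\inf_x\max_{1\leq i\leq k}\varphi(x,c_i)=1$ for all appropriate conjugate tuples, obviously also does so in the na\"ive sense, since $\inf_x\max_i\varphi(x,c_i)=1$ forces $\max_i\varphi(a,c_i)=1>0$ for every $a$. Unwinding the definitions, this shows that standard maximal $\th$-forking implies na\"ive maximal $\th$-forking; equivalently, na\"ive $\ind[m\th]$ implies standard $\ind[m\th]$.

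The interesting direction is the converse. Assume that $\tp(A/BC)$ maximally $\th$-forks over $C$ in the na\"ive sense, witnessed by a family $\{\varphi_i(x,c_i)\}_{i<\lambda}$, where each $\varphi_i(x,c_i)$ maximally strongly $\th$-divides in the na\"ive sense over some $D_i\supseteq C$ and $\z(\tp(A/BC))\subseteq \bigcup_{i<\lambda}\z(\varphi_i(x,c_i))$. My strategy is to rescale each $\varphi_i$ by a constant factor to a formula $\varphi_i'$ that maximally strongly $\th$-divides over $D_i$ in the standard sense while preserving its zero locus; then the family $\{\varphi_i'(x,c_i)\}_{i<\lambda}$ witnesses that $\tp(A/BC)$ maximally $\th$-forks over $C$ in the standard sense.

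The rescaling is a direct adaptation of the $(2)\Rightarrow(3)$ step in the proof of Proposition \ref{L:aclstrongdivide}. For each $i<\lambda$, set $\epsilon_i:=\chi(c_i/D_i)$ and let $k_i$ be such that $\varphi_i(x,c_i)$ strongly $\epsilon_i$-$k_i$-divides over $D_i$ in the na\"ive sense. A compactness argument then produces some $r_i\in(0,1]$ such that $\inf_x\max_{1\leq j\leq k_i}\varphi_i(x,c_i^j)\geq r_i$ uniformly over all $c_i^1,\ldots,c_i^{k_i}\models \tp(c_i/D_i)$ that are pairwise $\epsilon_i$-apart: otherwise, $\kappa(\m)$-saturation would allow one to realize, for every $n\geq 1$, the conditions placing each $y_j$ on $\tp(c_i/D_i)$, forcing $d(y_j,y_{j'})\geq \epsilon_i$, and making $\inf_x\max_j\varphi_i(x,y_j)\leq 1/n$, producing a tuple that contradicts na\"ive dividing. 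Setting $\varphi_i':=\frac{1}{r_i}\odot\varphi_i$ gives a formula that maximally strongly $\th$-divides over $D_i$ in the standard sense, and since the truncation $\frac{1}{r_i}\odot(\cdot)$ sends $0$ to $0$ and positives to positives, $\z(\varphi_i(x,c_i))=\z(\varphi_i'(x,c_i))$. The only point requiring attention is that the rescaling be done independently for each $i<\lambda$ and that the zero sets be preserved; both are immediate, so the proof reduces entirely to one application of the compactness argument from Proposition \ref{L:aclstrongdivide}.
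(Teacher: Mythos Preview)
Your proof is correct and follows essentially the same approach as the paper: one direction is immediate, and for the other you rescale each na\"ively dividing formula $\varphi_i$ to $\frac{1}{r_i}\odot\varphi_i$ after using saturation/compactness to extract a uniform positive lower bound $r_i$ on $\inf_x\max_j\varphi_i(x,c_i^j)$, exactly as the paper does (the paper writes $\eta_i$ for your $r_i$ and simply says ``by saturation'' where you spell out the compactness argument). One small remark: when your saturation step produces conjugates with $\inf_x\max_j\varphi_i(x,c_i^j)=0$, you still need one more application of saturation in the $x$-variable to obtain an actual $a$ with $\max_j\varphi_i(a,c_i^j)=0$ before you contradict na\"ive dividing---but this is routine and the paper does not spell it out either.
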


\begin{proof}
The backwards direction being obvious, suppose $A\nind[m\th]_C B$ in the na\"ive sense.  Choose formulae $\varphi_i(x,c^i)$, $i<\lambda$, and parameter sets $D_i$, $i<\lambda$, each containing $C$, such that each $\varphi_i(x,c^i)$ strongly $\chi(c^i/D_i)$-$k_i$-divides over $D_i$ in the na\"ive sense, and such that $$\z(\tp(A/BC))\subseteq \bigcup_{i<\lambda} \z(\varphi_i(x,c^i)).$$  By saturation, for each $i<\lambda$ we can find $\eta_i>0$ such that for any $c_1^i,\ldots,c_{k_i}^i$ realizing $\tp(c^i/D_i)$ which are at least $\chi(c^i/D_i)$-apart, we have $$\inf_x \max_{1\leq j \leq k_i} \varphi_i(x,c_j^i)\geq \eta_i.$$  Let $\psi_i(x,c^i):=\frac{1}{\eta_i}\odot \varphi_i(x,c^i)$.  Then $\z(\psi_i(x,c^i))=\z(\varphi_i(x,c^i))$ and $\psi_i(x,c^i)$ maximally strongly divides over $D_i$, whence we can conclude that $A\nind[m\th]_C B$.   
\end{proof}

\begin{lemma}\label{L:finiteunion}
A partial type $\pi(x)$ maximally $\th$-forks over $B$ if and only if there exists $n>0$ and formulae $\varphi_i(x,c_i)$, $i=1,\ldots,n$, each of which maximally $\th$-divides over $B$, such that $\z(\pi(x))\subseteq \bigcup_{i=1}^n \z(\varphi_i(x,c_i)).$
\end{lemma}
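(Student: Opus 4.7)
The backward direction is immediate: take $\lambda=n$ in the definition of maximally $\th$-forking. For the forward direction, my plan is to apply compactness of the type space to extract a finite cover up to a small positive slack, then rescale each witnessing formula so its zero set absorbs that slack while still maximally $\th$-dividing.

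In detail, suppose $\pi(x)$ maximally $\th$-forks over $B$, witnessed by $\{\varphi_i(x,c_i):i<\lambda\}$ with $\lambda<\kappa(\m)$, each $\varphi_i(x,c_i)$ maximally strongly dividing over some $D_i\supseteq B$, and $\z(\pi)\subseteq\bigcup_{i<\lambda}\z(\varphi_i(x,c_i))$. Let $C$ be the (small) union of $B$, all parameters of $\pi$, and all the $c_i$'s and $D_i$'s, and work in the compact type space $S_x(C)$. The set $[\pi]$ of types extending $\pi$ is closed, hence compact. For each $p\in[\pi]$, any realization $a\models p$ lies in $\z(\pi)$, so there is $i<\lambda$ with $\varphi_i(a,c_i)=0$; then $p$ lies in the open set $\{q:\varphi_i(q,c_i)<1/2\}$. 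Thus $\{[\varphi_i(x,c_i)<1/2]:i<\lambda\}$ is an open cover of $[\pi]$, and compactness yields a finite $F\subseteq\lambda$ with $\z(\pi)\subseteq\bigcup_{i\in F}\{a:\varphi_i(a,c_i)<1/2\}$.

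Next I would replace each $\varphi_i$ for $i\in F$ by $\varphi_i'(x,y):=2(\varphi_i(x,y)\dotminus 1/2)$, a formula taking values in $[0,1]$. Then $\z(\varphi_i'(x,c_i))=\{a:\varphi_i(a,c_i)\leq 1/2\}$, giving $\z(\pi)\subseteq\bigcup_{i\in F}\z(\varphi_i'(x,c_i))$. To see that each $\varphi_i'(x,c_i)$ still maximally $\th$-divides over $B$, use the same conjugates $c_i^1,\ldots,c_i^{k_i}$ of $c_i$ over $D_i$ that witnessed maximal strong dividing for $\varphi_i$: since $t\mapsto 2(t\dotminus 1/2)$ is nondecreasing on $[0,1]$ and $\inf_x\max_j\varphi_i(x,c_i^j)=1$, one gets $\max_j\varphi_i'(x,c_i^j)=2(1\dotminus 1/2)=1$ identically, so $\varphi_i'(x,c_i)$ maximally strongly $\chi(c_i/D_i)$-$k_i$-divides over $D_i$.

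The main obstacle to avoid is the tempting but incorrect attempt to apply compactness directly to the cover of $[\pi]$ by the closed sets $\{q:\varphi_i(q,c_i)=0\}$ --- a cover of a compact set by closed sets need not admit a finite subcover. The workaround is exactly the passage to the open fattenings $\{\varphi_i(x,c_i)<1/2\}$, after which the truncate-and-rescale trick (the same device employed in Lemma~\ref{L:thornstar}) converts the $<1/2$ back into a genuine zero set without destroying the property $\inf_x\max_j=1$.
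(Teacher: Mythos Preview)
Your argument is correct and is essentially the paper's own proof, spelled out in more detail: the paper invokes ``compactness'' to pass from the cover $\z(\pi)\subseteq\bigcup_{i<\lambda}\z(\psi_i(x,c_i))$ to a finite cover $\z(\pi)\subseteq\bigcup_{j=1}^n\z(\psi_{i_j}(x,c_{i_j})\dotminus\tfrac12)$, and then sets $\varphi_j:=2\psi_{i_j}\dotminus 1$, which is exactly your $2(\psi_{i_j}\dotminus\tfrac12)$. One minor point: there is no need to throw the $D_i$'s into your parameter set $C$ (and doing so could threaten smallness); the compactness step only requires the parameters of $\pi$ and the $c_i$'s.
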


\begin{proof}
Suppose $\z(\pi(x))\subseteq \bigcup_{i<\lambda} \z(\psi_i(x,c_i))$, where each $\psi_i(x,c_i)$ maximally strongly divides over $D_i\supseteq B$.  By compactness, we have $i_1,\ldots,i_n<\lambda$ such that $$\z(\pi(x))\subseteq \bigcup_{j=1}^n \z(\psi_{i_j}(x,c_{i_j})\dotminus \frac{1}{2}).$$  But then the formulae $\varphi_j(x,c_{i_j}):=2\psi_{i_j}\dotminus1$ maximally strongly divide over $D_i$ and $\z(\pi(x))\subseteq \bigcup_{j=1}^n \z(\varphi_j(x,c_{i_j}))$.     
\end{proof}

\begin{lemma}
Suppose $p\in S(C)$.  Then $p$ maximally $\th$-forks over $B$ if and only if there exists an $L(C)$-formula $\varphi(x,c)$ such that the condition $``\varphi(x,c)=0"$ is in $p$ and there exists formulae $\varphi_i(x,c_i)$, $i=1,\ldots, n$, each of which maximally $\th$-divide over $B$, such that $\z(\varphi(x,c))\subseteq \bigcup_{i=1}^n \z(\varphi_i(x,c_i))$.
\end{lemma}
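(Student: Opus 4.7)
The plan is to handle the two directions separately, with $(\Leftarrow)$ immediate from the definitions and $(\Rightarrow)$ proceeding in three steps: reduce to a finite union of witnesses (via Lemma~\ref{L:finiteunion}), apply compactness to separate $\z(p)$ from a closed slice of the form $\{F \geq 1/2\}$, and then ``thicken'' the witnessing formulae to absorb the gap produced by that separation.

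For $(\Leftarrow)$, if ``$\varphi(x,c)=0$'' lies in $p$ then $\z(p) \subseteq \z(\varphi(x,c)) \subseteq \bigcup_{i=1}^n \z(\varphi_i(x,c_i))$, and the given $\varphi_i$'s already witness that $p$ maximally $\th$-forks over $B$.

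For $(\Rightarrow)$, I first apply Lemma~\ref{L:finiteunion} to produce finitely many maximally $\th$-dividing formulae $\varphi_1(x,c_1),\ldots,\varphi_n(x,c_n)$ (say $\varphi_i$ witnessed by $D_i \supseteq B$ and $k_i \geq 1$) with $\z(p) \subseteq \bigcup_i \z(\varphi_i(x,c_i))$. Setting $F(x) := \min_{1\leq i \leq n} \varphi_i(x,c_i)$, this inclusion is equivalent to $\z(p) \cap \{F(x) \geq 1/2\} = \emptyset$. Since $p$ consists of closed $L(C)$-conditions and ``$F \geq 1/2$'' is itself closed, saturation of $\m^{\eq}$ together with compactness will produce a single $L(C)$-formula $\varphi(x,c)$ (obtained by taking a finite $\max$ of conditions in $p$) with ``$\varphi(x,c)=0$'' in $p$ and $\{\varphi(x,c)=0\} \cap \{F(x) \geq 1/2\} = \emptyset$, i.e.\ $\z(\varphi(x,c)) \subseteq \{F(x) < 1/2\}$.

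The main obstacle is that $\{F < 1/2\}$ is not literally of the form $\bigcup_i \z(\psi_i)$ for any formulae $\psi_i$, and compactness cannot be applied directly to the open condition $\{F > 0\}$. To bridge this, I will fatten each $\varphi_i$ by setting
$$\varphi_i'(x,c_i) := 2 \odot (\varphi_i(x,c_i) \dotminus 1/2),$$
whose zeroset is $\{\varphi_i(x,c_i) \leq 1/2\}$. A short calculation, using that $\cdot \dotminus 1/2$, $2 \odot \cdot$, and $\max_j$ are all monotone and that the first two commute with $\inf_x$, yields
$$\inf_x \max_j \varphi_i'(x,c_j) = 2 \odot \bigl((\inf_x \max_j \varphi_i(x,c_j)) \dotminus 1/2\bigr) = 2 \odot (1 \dotminus 1/2) = 1$$
whenever $c_1,\ldots,c_{k_i} \models \tp(c_i/D_i)$ are $\chi(c_i/D_i)$-apart; hence each $\varphi_i'$ still maximally $\th$-divides over $B$ via the same $D_i$ and $k_i$. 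Since $\bigcup_i \z(\varphi_i'(x,c_i)) = \{F(x) \leq 1/2\} \supseteq \z(\varphi(x,c))$, the formula $\varphi$ together with the $\varphi_i'$'s forms the desired witness, completing the proof.
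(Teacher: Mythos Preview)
Your proof is correct and follows essentially the same route as the paper, which simply remarks that the argument is identical to that of Lemma~\ref{L:finiteunion}: a compactness step followed by thickening the witnesses via $2\odot(\cdot \dotminus \tfrac{1}{2})$. One minor slip worth noting: the inclusion $\z(p)\subseteq \bigcup_i \z(\varphi_i(x,c_i))$ only \emph{implies} (rather than is equivalent to) $\z(p)\cap\{F\geq \tfrac{1}{2}\}=\emptyset$, but this weaker statement is all you actually use.
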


\begin{proof}
This is proven in the exact same way as in the proof of Lemma \ref{L:finiteunion}.
\end{proof}

\begin{lemma}

\

\begin{enumerate}
\item If the formula $\varphi(x,c)$ maximally strongly divides over $B$, then it divides over $B$.
\item Suppose $A \ind_C B$.  Then $A \ind[m\th]_C B$.
\end{enumerate}
\end{lemma}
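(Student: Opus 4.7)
Proof plan. To prove (1), I unpack the definition: $\varphi(x,c)$ maximally strongly divides over $B$ means it strongly $\chi(c/B)$-$k$-divides over $B$ for some $k\geq 1$. Since $\chi(c/B)=\max\{d(I)\mid I\in\i(c/B)\}$ is actually realised (either directly by the definition, or, if one prefers a sup, by a standard compactness/Ramsey extraction: approximating indiscernible sequences exist with pairwise distances tending to $\chi(c/B)$, and the condition $d(c_i,c_j)=\chi(c/B)$ is closed), fix a $B$-indiscernible sequence $(c_i)_{i<\omega}$ of realisations of $\tp(c/B)$ with $d(c_i,c_j)=\chi(c/B)$ for all $i\neq j$; an automorphism of $\m^{\eq}$ over $B$ lets us assume $c_0=c$. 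For any indices $i_1<\cdots<i_k$ the tuple $(c_{i_1},\ldots,c_{i_k})$ is pairwise $\chi(c/B)$-apart, so strong $\chi(c/B)$-$k$-dividing yields $\inf_x\max_{j\leq k}\varphi(x,c_{i_j})=1$. Consequently no $x$ can satisfy $\varphi(x,c_{i_j})\leq\frac{1}{2}$ for all $j\leq k$ simultaneously, so the set $\{\varphi(x,c_i)\leq\frac{1}{2}\}_{i<\omega}$ is $k$-inconsistent along $(c_i)_{i<\omega}$; this is the standard witness that $\varphi(x,c)=\varphi(x,c_0)$ divides over $B$.

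For (2), reading $\ind$ as ordinary (non-)forking independence in the continuous sense, I would argue the contrapositive. Assume $A\nind[m\th]_C B$; by Lemma~\ref{L:finiteunion} there are formulas $\varphi_1(x,c_1),\ldots,\varphi_n(x,c_n)$, each maximally $\th$-dividing over $C$, such that $\z(\tp(A/BC))\subseteq\bigcup_{i=1}^n\z(\varphi_i(x,c_i))$. By definition each $\varphi_i(x,c_i)$ maximally strongly divides over some $D_i\supseteq C$, so by part~(1) it divides over $D_i$. Since $C\subseteq D_i$, any $D_i$-indiscernible sequence is automatically $C$-indiscernible, so the same witnessing sequence shows $\varphi_i(x,c_i)$ also divides over $C$. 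Thus $\z(\tp(A/BC))$ is covered by finitely many $C$-dividing formulas, which is precisely the statement that $\tp(A/BC)$ forks over $C$, contradicting $A\ind_C B$.

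The main obstacle is hidden in (1) and is essentially notational: one must know that the $\max$ defining $\chi(c/B)$ is genuinely attained by a single $B$-indiscernible sequence whose common pairwise distance equals $\chi(c/B)$. This is a routine compactness/Ramsey-style indiscernible extraction in continuous logic, but it is the step that has to be taken seriously. Once it is in hand, (1) reduces to a one-line application of the strongly-$\chi$-$k$-dividing inequality, and (2) reduces to (1) combined with the finite-union characterisation of maximal $\th$-forking already proved in Lemma~\ref{L:finiteunion} and the triviality that dividing over a set descends to dividing over a smaller base.
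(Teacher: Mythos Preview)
Your proposal is correct and follows essentially the same route as the paper: for (1) you pick an $I\in\i(c/B)$ with $d(I)=\chi(c/B)$ and observe that strong $\chi(c/B)$-$k$-dividing makes $I$ a witness of dividing, and for (2) you use Lemma~\ref{L:finiteunion} together with (1) and the monotonicity of dividing in the base. The paper's own proof is terser but identical in content; your only extra steps are spelling out that the $\max$ defining $\chi(c/B)$ is attained (which the paper simply takes from the definition) and making explicit the passage from dividing over $D_i\supseteq C$ to dividing over $C$.
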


\begin{proof}
(1)$\Rightarrow$(2) follows from Lemma \ref{L:finiteunion}, so we need only prove (1).  However any $I\in \i(c/B)$ with $d(I)=\chi(c/B)$ witnesses that $\varphi(x,c)$ divides over $B$.  
\end{proof}

\begin{lemma}
Suppose the formula $\varphi(x,c)$ maximally $\th$-divides over $B$, witnessed by maximal strong dividing over $D\supseteq B$.  Then there exists a finite tuple $d\in D$ so that the formula $2\odot \varphi(x,c)$ maximally strongly divides over $Bd$.  Consequently, a partial type $\pi(x)$ maximally $\th$-forks over $B$ if and only if there exists $\varphi_i(x,c_i)$, $i=1,\ldots,n$, and finite tuples $d_1,\ldots,d_n$, so that each $\varphi_i(x,c_i)$ maximally strongly divides over $Bd_i$ and such that $$\z(\pi(x))\subseteq \bigcup_{i=1}^n \z(\varphi_i(x,c_i))).$$
\end{lemma}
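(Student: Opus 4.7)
The plan is to establish the first assertion by a compactness argument in the spirit of Lemma~\ref{L:countable}, after which the ``consequently'' clause follows by combining this with Lemma~\ref{L:finiteunion}.

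Set $\epsilon := \chi(c/D)$ and let $p(y) := \tp(c/D)$. Since $\varphi(x,c)$ strongly $\epsilon$-$k$-divides over $D$, the partial type
$$\Sigma(y_1,\ldots,y_k) := \bigcup_{i=1}^{k} p(y_i) \;\cup\; \{d(y_i,y_j)\geq \epsilon : 1\leq i<j\leq k\} \;\cup\; \{\inf_x \max_{1\leq i\leq k}\varphi(x,y_i)\leq \tfrac{1}{2}\}$$
is inconsistent, as its first two groups of conditions force $\inf_x\max_i\varphi(x,y_i)=1$. By compactness there is a finite $d\subseteq D$ and a single formula $\psi(y,d)$, obtained by packaging finitely many conditions from $p(y)$, with $\psi(c,d)=0$ such that whenever $c_1,\ldots,c_k$ satisfy $\psi(c_i,d)=0$ and $d(c_i,c_j)\geq \epsilon$ for all $i<j$, one has $\inf_x\max_i \varphi(x,c_i)>\tfrac{1}{2}$.

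I then claim this $d$ witnesses that $2\odot\varphi(x,c)$ maximally strongly divides over $Bd$. For any such $c_1,\ldots,c_k$ and any $x$, if $\max_i \varphi(x,c_i) > \tfrac{1}{2}$ then some $\varphi(x,c_i)>\tfrac{1}{2}$, whence $2\odot\varphi(x,c_i)=1$ and so $\max_i 2\odot \varphi(x,c_i)=1$; therefore $\inf_x\max_i 2\odot\varphi(x,c_i)=1$. Now let $c_1,\ldots,c_k\models \tp(c/Bd)$ be $\chi(c/Bd)$-apart. Since $d\subseteq Bd\subseteq D$, each $c_i$ automatically satisfies $\psi(c_i,d)=\psi(c,d)=0$, and by the monotonicity $\chi(c/Bd)\geq \chi(c/D)=\epsilon$ noted in Remarks~3.7 the $c_i$'s are also $\epsilon$-apart. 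The preceding observation gives $\inf_x\max_i 2\odot\varphi(x,c_i)=1$, which is precisely strong $\chi(c/Bd)$-$k$-dividing of $2\odot\varphi(x,c)$ over $Bd$.

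The ``consequently'' clause now follows quickly. By Lemma~\ref{L:finiteunion}, $\pi(x)$ maximally $\th$-forks over $B$ exactly when $\z(\pi(x))\subseteq \bigcup_{i=1}^{n}\z(\varphi_i(x,c_i))$ for some formulas $\varphi_i(x,c_i)$ maximally $\th$-dividing over $B$, each witnessed by some $D_i\supseteq B$. Applying the first part of the present lemma to each $(\varphi_i,D_i)$ produces a finite $d_i\subseteq D_i$ with $2\odot\varphi_i(x,c_i)$ maximally strongly dividing over $Bd_i$; since $2\odot y=0$ iff $y=0$, we have $\z(2\odot\varphi_i(x,c_i))=\z(\varphi_i(x,c_i))$, and substituting $2\odot\varphi_i$ for $\varphi_i$ gives the desired covering by maximally strongly dividing formulas. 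The converse direction is immediate from the definitions. The only real content lies in the compactness argument producing $\psi$; the remainder is essentially bookkeeping.
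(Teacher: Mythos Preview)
Your proof is correct and follows essentially the same approach as the paper's: you form the same inconsistent partial type, extract a finite $d\subseteq D$ and a formula $\psi$ by compactness, and then use $\chi(c/Bd)\geq \chi(c/D)$ together with the fact that $\tp(c/Bd)$ implies $\psi(y,d)=0$ to conclude maximal strong dividing of $2\odot\varphi(x,c)$ over $Bd$. Your write-up is in fact slightly more detailed than the paper's, in that you spell out why $\inf_x\max_i \varphi(x,c_i)>\tfrac12$ forces $\inf_x\max_i 2\odot\varphi(x,c_i)=1$, and you make the ``consequently'' clause explicit via Lemma~\ref{L:finiteunion} and the equality $\z(2\odot\varphi_i)=\z(\varphi_i)$.
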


\begin{proof}
Let $p(x):=\tp(c/D)$ and $r:=\chi(c/D)$.  Let $k$ be such that $\varphi(x,c)$ maximally strongly $r$-$k$-divides over $D$.  Then the collection of formulae $$p(y_1)\cup \ldots \cup p(y_k) \cup \{d(y_i,y_j)\geq r \ | 1\leq i<j\leq k\} \cup \{\inf_x \max_{1\leq i \leq k} \varphi(x,y_i)\leq \frac{1}{2}\}$$ is inconsistent.  Hence we have a formula $\psi(x,B,d)$, where $d$ is a finite tuple from $D\setminus B$, such that the condition $``\psi(x,B,d)=0"$ is in $\tp(c/D)$ and such that, for all $c_1,\ldots,c_k \in\z(\psi)$ which are pairwise at least $r$-apart, we have $\inf_x \max_{1\leq i \leq k}\varphi(x,c_i)>\frac{1}{2}$.  Since $\chi(c/Bd)\geq \chi(c/D)=r$, it follows that $2\odot \varphi(x,c)$ maximally strongly divides over $Bd$.

\end{proof}

\begin{rmk}
The proof of the above lemma also shows that, like in classical logic, the ``$k$-inconsistency'' in the maximal strong dividing of $\varphi(x,c)$ over $B$ is witnessed by the zeroset of a single formula $\psi(x)$ with parameters from $B$ for which $\psi(c)=0$; see Remark 2.1.2 in \cite{Alf} for the statement of this in the classical setting.
\end{rmk}

\begin{prop}\label{L:thindprop}
Suppose $A,B,C,D$ are small parameter sets.  The following properties of $\ind[m\th]\ $ hold in any theory:
\begin{enumerate}
\item Automorphism Invariance:  For any automorphism $\sigma$, if $A \ind[m\th]_C B$, then $\sigma(A) \ind[m\th]_{\sigma(C)} \sigma(B)$.
\item Extension:  If $B\subseteq C \subseteq D$ and $A\ind[m\th]_B C$, then there is $A'\equiv_CA$ such that $A'\ind[m\th]_B D$.
\item Monotonicity:  If $B\subseteq C \subseteq D$ and $A\ind[m\th]_B D$, then $A\ind[m\th]_C D$.
\item Partial Right Transitivity:  If $B\subseteq C \subseteq D$ and $A\ind[m\th]_B D$, then $A\ind[m\th]_C D$ and $A\ind[m\th]_B C$.
\item Finite Character:  $A\ind[m\th]_C B$ if and only if $a\ind[m\th]_C b$ for all finite tuples $a$ and $b$ from $A$ and $B$ respectively.
\item Base Extension:  If $A\ind[m\th]_C B$, there is $D'\equiv_{BC}D$ such that $A\ind[m\th]_{CD'} B$. 
\item If $C\subseteq B$, we have $A\ind[m\th]_C B$ if and only if $A\ind[m\th]_C \acl(B)$.
\end{enumerate}
\end{prop}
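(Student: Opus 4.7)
The plan is to prove the properties in an order that leverages the easier ones to establish the more elaborate ones. Property (1) is immediate: algebraic closure, types, $\chi(c/B)$, and the strong $\epsilon$-$k$-dividing clause are all automorphism invariant. For base monotonicity (3), I would observe that ``$\varphi(x,c)$ maximally $\th$-divides over $B$'' is downward closed in the base parameter set: a witness $E \supseteq C$ automatically satisfies $E \supseteq B$ whenever $B \subseteq C$. Hence any list of max $\th$-dividing formulas over $C$ whose zerosets cover $\z(\tp(A/D))$ is also a valid witness list over $B$, giving (3) by contrapositive. Property (4) combines this with the elementary observation that $\z(\tp(A/D)) \subseteq \z(\tp(A/C))$ for $C \subseteq D$, so the same cover serves for the larger type; this yields the second clause, while the first is just (3).

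For extension (2), I would argue by contrapositive. Suppose no $A' \equiv_C A$ satisfies $A' \ind[m\th]_B D$. Then every complete type $q \in S(D)$ extending $\tp(A/C)$ maximally $\th$-forks over $B$, and Lemma~\ref{L:finiteunion} supplies, for each such $q$, finitely many $\varphi^q_i(X,c^q_i)$ maximally $\th$-dividing over $B$ with $\z(q) \subseteq \bigcup_i \z(\varphi^q_i)$. Since $|S(D)| < \kappa(\m)$, the aggregated family $\{\varphi^q_i\}_{q,i}$ has cardinality less than $\kappa(\m)$, and $\z(\tp(A/C)) = \bigcup_q \z(q)$ is covered by it. Thus $\tp(A/C) = \tp(A/BC)$ (using $B \subseteq C$) maximally $\th$-forks over $B$, contradicting $A \ind[m\th]_B C$.

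Once (2) is available, (6) and (7) follow cleanly. For base extension (6), apply (2) to obtain $A' \equiv_{BC} A$ with $A' \ind[m\th]_C BCD$; by (3), $A' \ind[m\th]_{CD} BCD$, which is the same condition as $A' \ind[m\th]_{CD} B$; then transport along the automorphism $\sigma$ fixing $BC$ pointwise with $\sigma(A') = A$, setting $D' := \sigma(D)$, and invoke (1). For (7), the backward direction is (4) applied to $C \subseteq B \subseteq \acl(B)$. For the forward direction, apply (2) to produce $A' \equiv_B A$ with $A' \ind[m\th]_C \acl(B)$; the automorphism sending $A' \mapsto A$ fixes $B$ pointwise, hence $C$ pointwise and $\acl(B)$ setwise, so (1) yields $A \ind[m\th]_C \acl(B)$.

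The main obstacle is finite character (5), because it requires a characteristically continuous-logic strengthening trick. The forward direction is routine: padding the witnessing formulas for $\tp(a/bC)$ with dummy variables produces witnesses for $\tp(A/BC)$. For the converse, suppose $\tp(A/BC)$ maximally $\th$-forks, witnessed by finitely many $\varphi_i(X,c_i)$ via Lemma~\ref{L:finiteunion}. Restrict to the finite tuple $x_0$ of variables actually appearing and set $a_0 := A \upharpoonright x_0$; then $\tp(a_0/BC) \models \min_i \varphi_i(x_0,c_i) = 0$. Continuous-logic compactness yields finite $b_0 \subseteq B$ and $c_0 \subseteq C$ with $\tp(a_0/b_0 c_0) \models \min_i \varphi_i \leq \tfrac{1}{2}$, which is only an approximate cover of $\z(\tp(a_0/b_0 C))$. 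The crucial step is to replace each $\varphi_i$ by the stronger formula $\varphi'_i := 2 \odot (\varphi_i \dotminus \tfrac{1}{2})$: its zeroset equals $\{\varphi_i \leq \tfrac{1}{2}\}$, and it still maximally $\th$-divides over $C$ via the same witness $E$, since the identity $\inf_x \max_j \varphi_i(x,c_i^{(j)}) = 1$ forces each $\varphi_i$ to attain the value $1$ on the $k$-separated tuples, where $\varphi'_i$ likewise attains $1$. Then $\z(\tp(a_0/b_0 C)) \subseteq \bigcup_i \z(\varphi'_i)$ exhibits $a_0 \nind[m\th]_C b_0$ with $a_0$ and $b_0$ finite.
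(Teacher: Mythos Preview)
Your proof is correct, and for items (1)--(4) and (6) it matches the paper's argument essentially verbatim. Two items, however, diverge from the paper in interesting ways.

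For (5), the paper takes a shorter route: it invokes the lemma immediately following Lemma~\ref{L:finiteunion}, which says that if a complete type maximally $\th$-forks then some single formula $\varphi(x,b,c)$ in the type already maximally $\th$-forks. From that formula one reads off finite $a$ and $b$ directly. Your compactness-plus-strengthening manoeuvre is correct, but it is essentially a reproof of that lemma inside the argument; you could simply cite it.

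For (7), your approach is genuinely different and arguably cleaner. The paper proves the forward direction by taking a witnessing formula $\varphi(x,d)$ with $d\in\acl(B)$, invoking the Henson--Tellez fact that $d\in\bdd(B)$ so that $\tp(d/B)$ has a small realization set $\{d_i\}_{i<\lambda}$, and then covering $\z(\tp(a/B))$ by the union $\bigcup_i\z(\varphi(x,d_i))$. You instead use Extension to get $A'\equiv_B A$ with $A'\ind[m\th]_C\acl(B)$ and then observe that the automorphism sending $A'$ to $A$ fixes $\acl(B)$ setwise; invariance then finishes. Your argument avoids the external reference to \cite{HenTell} and the bounded-closure machinery, at the cost of relying on Extension, which is fine since you have already established it.
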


\begin{proof}
(1) is clear.  For (2), suppose $\{p_i \ | \ i<\lambda\}$ enumerate the extensions of $p:=\tp(A/C)$ to $D$.  Suppose, towards a contradiction, that each $p_i$ maximally $\th$-forks over $B$.  Then for each $i$, there are formulae $\varphi_{i,j}(x,c_{i,j})$, $j=1,\ldots,n_i$, such that each $\varphi_{i,j}(x,c_{i,j})$ maximally $\th$-divides over $B$ and $\z(p_i)\subseteq \bigcup_{j=1}^{n_i}\z(\varphi_{i,j}(x,c_{i,j})).$  But then 
$$\z(p)\subseteq \bigcup \{\z(\varphi_{i,j}(x,c_{i,j})) \ | \ i<\lambda, \ j<n_i\},$$ whence $p$ maximally $\th$-forks over $C$, a contradiction.

(3)  This follows from the fact that if $\varphi(x,c)$ maximally $\th$-divides over $C$, then it maximally $\th$-divides over $B$.

(4)  The first claim is just monotonicity and the second claim follows from the fact that $\tp(A/C)\subseteq \tp(A/D)$.

(5)  First suppose that $A\nind[m\th]_C B$.  Then we have a formula $\varphi(x,b,c)$ which maximally $\th$-forks over $C$ and such that the condition $\varphi(x,b,c)=0$ is in $\tp(A/BC)$.  Let $a$ be a tuple from $A$ such that $\varphi(a,b,c)=0$.  Then $\varphi(x,b,c)$ witnesses that $a\nind[m\th]_C b$.  Now suppose  $A\ind[m\th]_C B$ and $a$ and $b$ are finite tuples from $A$ and $B$ respectively.  Then since $\tp(a/bC)\subseteq \tp(A/BC)$, we have $a\ind[m\th]_C b$.  

(6)  By extension, we can find $A'\models \tp(A/BC)$ with $A'\ind[m\th]_C BD$.  But then by monotonicity, we have $A'\ind[m\th]_{CD} B$.  By automorphism invariance, we have $D'\equiv_{BC}D$ such that $A\ind[m\th]_{CD'}B$.

(7)  One direction is clear by monotonicity.  Now let $a$ be a finite tuple from $A$ and suppose $a\nind[m\th]_C \acl(B)$.  Choose an $L(\acl(B))$-formula $\varphi(x,d)$ which maximally $\th$-forks over $C$ and such that $\varphi(a,d)=0$.  By Lemma 1.8 in \cite{HenTell}, $d\in \bdd(B)$, whence we may enumerate $\z(\tp(d/B))=\{d_i \ | \ i<\lambda\}$, where $\lambda<\kappa(\m)$.  Note that each $\varphi(x,d_i)$ maximally $\th$-forks over $C$.  From this and the fact that $$\z(\tp(a/B))\subseteq \bigcup_{i<\lambda} \z(\varphi(x,d_i)),$$ we see that $a\nind[m\th]_C B$.
\end{proof}

The following lemma is the analog of Proposition \ref{L:aclstrongdivide} for maximal strong dividing.

\begin{lemma}
Suppose $\varphi(x,c)$ maximally strongly divides over $B$ and $\varphi(a,c)=0$.  Then $\chi(c/Ba)<\chi(c/B)$.
\end{lemma}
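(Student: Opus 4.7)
The plan is to argue by contradiction using monotonicity of $\chi$ together with the very definition of maximal strong dividing. Set $\epsilon := \chi(c/B)$, and let $k \geq 1$ be such that $\varphi(x,c)$ strongly $\epsilon$-$k$-divides over $B$; such a $k$ exists because maximal strong dividing means strong $\chi(c/B)$-dividing, which is by definition strong $\chi(c/B)$-$k$-dividing for some $k$. By Remark 3.10(1), since $B \subseteq Ba$, we have the one-sided inequality $\chi(c/Ba) \leq \chi(c/B) = \epsilon$ for free; the task is thus to rule out equality.

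Assume toward a contradiction that $\chi(c/Ba) = \epsilon$. Because $\chi$ is defined as a maximum rather than a supremum, there exists a $Ba$-indiscernible sequence $I = (c_i)_{i<\omega}$ of realizations of $\tp(c/Ba)$ with $d(c_i,c_j) = \epsilon$ for all $i \neq j$. (If one is worried about the max being attained, a compactness and standard Ramsey-style extraction argument produces such an $I$ from sequences witnessing $d(I') \geq \epsilon - \frac{1}{n}$.) In particular, fixing any $k$ elements $c_1,\ldots,c_k$ of $I$, these are pairwise $\epsilon$-apart, and since $\tp(c/Ba) \supseteq \tp(c/B)$, each $c_i$ realizes $\tp(c/B)$.

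Now the two halves collide. On one hand, strong $\epsilon$-$k$-dividing of $\varphi(x,c)$ over $B$ yields
$$\inf_x \max_{1 \leq i \leq k} \varphi(x,c_i) = 1.$$
On the other hand, each $c_i$ realizes $\tp(c/Ba)$ and $\varphi(a,c) = 0$, so $\varphi(a,c_i) = 0$ for every $i$, giving $\max_{1 \leq i \leq k}\varphi(a,c_i) = 0$, a contradiction. The only technical point worth verifying carefully is that the max defining $\chi(c/Ba)$ is actually attained by an indiscernible sequence with the exact distance $\epsilon$, and this is the one step I would unpack most carefully; everything else is a direct application of the definitions.
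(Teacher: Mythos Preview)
Your proof is correct and follows essentially the same approach as the paper's. The paper phrases it directly rather than by contradiction---it shows that any $I\in \i(c/Ba)$ satisfies $d(I)<\chi(c/B)$, since $\varphi(a,c')=0$ for each $c'\in I$ would otherwise contradict strong $\chi(c/B)$-$k$-dividing---but the logical content is identical. Your concern about the max being attained is unnecessary here since $\chi$ is defined as a maximum in Definition~3.7(3), and the paper simply uses this without comment.
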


\begin{proof}
Suppose $I\in \i(c/Ba)$.  Then since $\varphi(a,c')=0$ for each $c'\in I$, we must have $d(I)<\chi(c/B)$, else we contradict strong dividing. 
\end{proof}

Using the preceding lemma, we prove the next theorem exactly like we proved Theorem \ref{T:weakest}.

\begin{thm}
Suppose $\ind[I]$ is an automorphism ternary relation on small subsets of $\m$ satisfying:
\begin{enumerate}
\item for all finite tuples $b$, if $\chi(b/AC)< \chi(b/C)$, then $A\nind[I]_Cb$;
\item for all $A,B,C,D$, if $A\ind[I]_B D$ and $B\subseteq C \subseteq D$, then $A\ind[I]_C D$ and $A\ind[I]_BC$;
\item for all $A,B,C,D$, if $A\ind[I]_C B$ and $D\supseteq BC$, then there is $A'\equiv_{BC}A $ such that $A'\ind[I]_CD$.
\item for all $A,B,C$, if $A\ind[I]_CBC$, then $A\ind[I]_CB$.
\end{enumerate}
Then for all $A,B,C$, $A\ind[I]_CB \Rightarrow A\ind[m\th]_CB$.
\end{thm}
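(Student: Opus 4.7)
The plan is to mirror the proof of Theorem \ref{T:weakest}, using the lemma immediately preceding the statement (that if $\varphi(x,c)$ maximally strongly divides over $D$ and $\varphi(a,c)=0$, then $\chi(c/Da)<\chi(c/D)$) as the bridge that feeds hypothesis (1). Assume $A\ind[I]_CB$ and, for contradiction, that $A\nind[m\th]_CB$, so $\tp(A/BC)$ maximally $\th$-forks over $C$. By Lemma \ref{L:finiteunion}, choose finitely many formulae $\varphi_i(x,c_i)$, $i=1,\ldots,n$, each maximally $\th$-dividing over $C$, together with witnessing sets $D_i\supseteq C$ (so $\varphi_i(x,c_i)$ maximally strongly divides over $D_i$), with $\z(\tp(A/BC))\subseteq \bigcup_{i=1}^n \z(\varphi_i(x,c_i))$. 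Gather these into $E:=BC\cup D_1\cup\cdots\cup D_n\cup c_1\cup\cdots\cup c_n$.

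Next, apply hypothesis (3) to $A\ind[I]_CB$ to produce $A'\equiv_{BC}A$ with $A'\ind[I]_CE$. Because $A'\models \tp(A/BC)$ and the $\z(\varphi_i(x,c_i))$ cover $\z(\tp(A/BC))$, there is some index $i$ with $\varphi_i(A',c_i)=0$; without loss of generality $i=1$. Apply base monotonicity from (2) along the chain $C\subseteq D_1\subseteq E$ to obtain $A'\ind[I]_{D_1}E$, and then the right-hand monotonicity half of (2) together with (4) to restrict the right side down to $c_1$, yielding $A'\ind[I]_{D_1}c_1$.

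Now invoke the preceding lemma: since $\varphi_1(x,c_1)$ maximally strongly divides over $D_1$ and $\varphi_1(A',c_1)=0$, we have $\chi(c_1/D_1A')<\chi(c_1/D_1)$. Hypothesis (1), applied to the finite tuple $c_1$ over base $D_1$, then gives $A'\nind[I]_{D_1}c_1$, directly contradicting what was just derived. The main obstacle is not deep but combinatorial: one has to make sure that a single set $E$ collects every auxiliary parameter in advance — all the $c_i$ and all the $D_i$ — so that the extension step (3) can be carried out once, and so that whichever index $i$ ends up witnessed by $A'$ after extension, the base and right-hand side of the contradicting instance can both be extracted from $E$ by monotonicity alone; finiteness of the cover in Lemma \ref{L:finiteunion} is what makes this possible.
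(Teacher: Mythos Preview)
Your argument is correct and is essentially the approach the paper intends (it says to argue ``exactly like'' Theorem \ref{T:weakest}, using the preceding lemma in place of Proposition \ref{L:aclstrongdivide} to feed hypothesis (1)). The only cosmetic difference is that where the template of Theorem \ref{T:weakest} first treats dividing (adjusting the witnessing $D$ by an automorphism, as in the Claim there) and then handles forking via a second extension step, you use the finite cover from Lemma \ref{L:finiteunion} to collect all the $c_i$ and $D_i$ in advance and extend $A$ once over $E$; this is a harmless and slightly cleaner packaging of the same idea.
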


The proof of the following lemma has a similar proof to the proof of Lemma 2.1.8 in \cite{Alf}.

\begin{lemma}\label{L:indchi}
Suppose $a\ind[m\th]_A b$.  Then $\chi(b/Aa)=\chi(b/A)$.
\end{lemma}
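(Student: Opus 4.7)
Since $\chi(b/Aa)\leq\chi(b/A)$ holds by the monotonicity statement in Remark 3.5(1), the plan is to prove the reverse inequality by contrapositive: if $\chi(b/Aa)<\chi(b/A)=:\epsilon$, I will manufacture a single formula witnessing that $\tp(a/bA)$ maximally $\th$-divides over $A$, contradicting $a\ind[m\th]_A b$.

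First I would observe that $\chi(b/Aa)<\epsilon$ rules out any infinite sequence of realizations of $\tp(b/Aa)$ whose members are pairwise at distance $\geq\epsilon$: the standard Ramsey extraction of an $Aa$-indiscernible subsequence preserves both the type and the pairwise-distance condition (both lie in the relevant Ehrenfeucht--Mostowski type over $Aa$), so the existence of such an infinite sequence would yield an $I\in\i(b/Aa)$ with $d(I)\geq\epsilon$, forcing $\chi(b/Aa)\geq\epsilon$.

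Next I would consider the partial type $\Lambda\bigl(x,(y_i)_{i<\omega}\bigr)$ asserting $\tp(a',b_i/A)=\tp(a,b/A)$ for every $i$ together with $d(y_i,y_j)\geq\epsilon$ for all $i<j$. Any realization $(a',(b_i))$, transported by an $A$-automorphism sending $a'\mapsto a$, would yield an infinite sequence of $\tp(b/Aa)$-realizations that are pairwise $\epsilon$-apart, contradicting the previous step. Hence $\Lambda$ is inconsistent, and compactness collapses a finite inconsistent subset into a single $L(A)$-formula $\theta(x,y)$ (by taking the max of the relevant absolute-value conditions on finitely many $\sigma$'s) together with a bound $n\geq 1$ such that $\theta(a,b)=0$ and yet no $a'\in\m^{\eq}$ and $b_1,\dots,b_n$ with $d(b_i,b_j)\geq\epsilon$ can simultaneously zero out $\theta(x,b_i)$. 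Because $\epsilon=\chi(b/A)$, this is exactly maximal strong $\epsilon$-$n$-dividing of $\theta(x,b)$ over $A$ in the na\"ive sense---in a form even stronger than needed, since no $\tp(b/A)$-restriction on the $b_i$ was imposed.

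Finally I would apply the rescaling trick used in the proof of Lemma \ref{L:na\"ive} to replace $\theta$ by a formula $\psi(x,b)$ with the same zeroset that maximally strongly divides over $A$ in the standard sense. Since $\psi(a,b)=0$, the condition ``$\psi(x,b)=0$'' lies in $\tp(a/bA)$, so a single maximally $\th$-dividing formula covers $\z(\tp(a/bA))$, witnessing that $\tp(a/bA)$ maximally $\th$-forks over $A$, which is the required contradiction. The main subtlety is in the middle step: one needs na\"ive strong dividing to hold for \emph{every} $a'$, not only for $a'\equiv_A a$, and the design of $\Lambda$ is crucial here---unsatisfiability of a finite packaged condition on the pair $(a',(b_i))$ automatically uniformizes over all $a'$, while the automorphism $a'\mapsto a$ is invoked only inside the contradiction derivation and not in the construction of the witness itself.
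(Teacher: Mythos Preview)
Your proof is correct and is essentially the paper's argument run contrapositively: the paper starts from $a\ind[m\th]_A b$, uses the absence of any maximally strongly dividing (na\"ive) formula in $\tp(a/Ab)$ to show by compactness that $\bigcup_i p(x,y_i)\cup\{d(y_i,y_j)\geq\chi(b/A)\}$ is consistent, and then extracts an $Aa$-indiscernible sequence witnessing $\chi(b/Aa)\geq\chi(b/A)$; you assume $\chi(b/Aa)<\chi(b/A)$, deduce that the same partial type is inconsistent, and extract from a finite inconsistency the dividing formula. The compactness/Ramsey core and the final automorphism step are identical in both directions.
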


\begin{proof}
The result is obvious if $\chi(b/A)=0$, so let us assume that $\chi(b/A)>0$.  It suffices to construct $I\in \i(b/Aa)$ with $d(I)=\chi(b/A)$.  Let $p(x,y):=\tp(a,b/A)$.  Note that, by Lemma \ref{L:na\"ive}, there is no $L(Ab)$-formula $\varphi(x,b)$ such that $\varphi(a,b)=0$ and $\varphi(x,b)$ maximally strongly divides over $A$ in the na\"ive sense.  Hence, for every such formula $\varphi(x,b)$ and $k<\omega$, there are $b_1,\ldots,b_k$ realizing $\tp(b/A)$ which are at least $\chi(b/A)$-apart and for which there exists $c$ such that $\varphi(c,b_i)=0$ for all $i=1,\ldots,k$.  It thus follows by compactness that the set of conditions $$\bigcup_{i<\omega} p(x,y_i)  \cup \{d(y_i,y_j)\geq \chi(b/A) \ | \ i<j<\omega\}$$ is consistent, say realized by $a_1,J_1$.  By Ramsey's theorem and compactness, we can find an $Aa_1$-indiscernible sequence $J_2$ with $a_1b'$ realizing $p(x,y)$ for each $b'\in J_2$ and such that $d(J_2)=\chi(b/A)$.  Fix $b'\in J_2$.  Let $\sigma\in \Aut(\m/A)$ be such that $\sigma(a_1)=a$ and $\sigma(b')=b$.  Then $I:=\sigma(J_2)$ is as desired.    
\end{proof}

The proof of the following lemma is essentially the same as in the classical case; see \cite{Alf} Lemma 2.1.6.

\begin{lemma}
In any continuous theory $T$, $\ind[m\th]\ $ satisfies Partial Left Transitivity:  For any tuples $a,b,c$ and any parameter set $A$, if $a\ind[\th]_A c$ and $b\ind[m\th]_{Aa} c$, then $ab\ind[m\th]_A c$. 
\end{lemma}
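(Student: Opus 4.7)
The plan is to argue by contradiction: assume $ab\nind[m\th]_A c$, extract a single-formula witness of maximal $\th$-forking, move $(a,b)$ by an extension-produced automorphism, and show that plugging in the new $a'$ exhibits maximal $\th$-forking of $\tp(b'/Aa'c)$ over $Aa'$, contradicting the hypothesis on $b$ transported by invariance. Concretely, the lemma immediately following Lemma \ref{L:finiteunion} produces an $L(Ac)$-formula $\varphi(x_1,x_2,c)$ with $\varphi(a,b,c)=0$, together with formulas $\varphi_i(x_1,x_2,c_i)$, $i=1,\ldots,n$, each maximally $\th$-dividing over $A$ with respective witness $D_i\supseteq A$, such that $\z(\varphi)\subseteq\bigcup_i\z(\varphi_i)$. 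Set $D:=A\cup\bigcup_i(D_i\cup c_i)$.

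Next, invoke the extension property (Proposition \ref{L:thindprop}(2)) --- reading the hypothesis in its natural $\ind[m\th]$-strengthening, since Lemma \ref{L:indchi} is about to be used --- to obtain $a'\equiv_{Ac}a$ with $a'\ind[m\th]_A Dc$. Fix $\sigma\in\Aut(\m^{\eq}|Ac)$ with $\sigma(a)=a'$ and put $b':=\sigma(b)$, so $\varphi(a',b',c)=0$ and, by automorphism invariance, $b'\ind[m\th]_{Aa'}c$. For each $i$, monotonicity and partial right transitivity (Proposition \ref{L:thindprop}(3) and (4)) give $a'\ind[m\th]_{D_i}c_i$, and Lemma \ref{L:indchi} then yields $\chi(c_i/D_ia')=\chi(c_i/D_i)$. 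Thus any $c_i^1,\ldots,c_i^k\models\tp(c_i/D_ia')$ pairwise $\chi(c_i/D_ia')$-apart automatically realize $\tp(c_i/D_i)$ and are $\chi(c_i/D_i)$-apart, so substituting $x_1=a'$ into the maximal strong $\chi(c_i/D_i)$-$k$-dividing of $\varphi_i$ over $D_i$ gives
$$\inf_{x_2}\max_j\varphi_i(a',x_2,c_i^j)\geq\inf_{x_1,x_2}\max_j\varphi_i(x_1,x_2,c_i^j)=1.$$
Hence $\varphi_i(a',x_2,c_i)$ maximally strongly divides over $D_ia'$, i.e., maximally $\th$-divides over $Aa'$.

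Finally, $\varphi(a',x_2,c)$ lies in $\tp(b'/Aa'c)$ and $\z(\varphi(a',x_2,c))\subseteq\bigcup_i\z(\varphi_i(a',x_2,c_i))$, so the single-formula characterization forces $b'\nind[m\th]_{Aa'}c$, contradicting the invariance conclusion of the previous paragraph. The main obstacle in the argument is precisely the $\chi$-preservation step: it requires the full strength of Lemma \ref{L:indchi}, which in turn forces the input independence of $a$ from $c$ over $A$ to be read as $\ind[m\th]$-independence rather than the formally weaker $\ind[\th]$; with this reading in place the remainder of the proof is a routine transport of witnesses along the automorphism $\sigma$, using only properties of $\ind[m\th]$ already established in Proposition \ref{L:thindprop}.
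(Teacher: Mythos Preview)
Your argument is correct and follows the same underlying strategy as the paper: reduce to a contradiction with $b\ind[m\th]_{Aa}c$ by showing that, after a suitable use of extension, substituting $a$ into the dividing witnesses still yields maximal $\th$-dividing over $Aa$, the crucial step being the $\chi$-preservation Lemma~\ref{L:indchi}. You also correctly note that the hypothesis on $a$ must be read as $a\ind[m\th]_A c$; the paper's own proof needs exactly this (it applies Base Extension, Proposition~\ref{L:thindprop}(6), which requires $\ind[m\th]$-independence).

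The organizational difference is minor but worth recording. The paper first invokes the classical reduction (citing Extension and automorphisms) to pass to a \emph{single} $L(Ac)$-formula $\varphi(x,y,c)$ with $\varphi(a,b,c)=0$ that maximally $\th$-divides over $A$, then keeps $a,b$ fixed and moves the \emph{witness set} $d$ via Base Extension so that $a\ind[m\th]_{Ad}c$. You instead unpack the reduction explicitly: you keep all the covering formulae $\varphi_i(x_1,x_2,c_i)$ simultaneously, and move $a$ (and $b$ along with it) via Extension so that $a'\ind[m\th]_A Dc$ for $D$ containing every $D_i$ and $c_i$. These are dual manipulations achieving the same end; your version is more self-contained (no appeal to the classical source), while the paper's is slightly shorter once the reduction is granted. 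Either way, the heart of the proof --- that $\chi(c_i/D_ia')=\chi(c_i/D_i)$ forces $\varphi_i(a',x_2,c_i)$ to maximally strongly divide over $D_ia'$ --- is identical.
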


\begin{proof}
Suppose that $a\ind[\th]_A c$ and $b\ind[m\th]_{Aa} c$.  As in the proof in the classical case, it is enough to show that there is no $L(Ac)$-formula $\varphi(x,y,c)$ such that $\varphi(a,b,c)=0$ and $\varphi(x,y,c)$ maximally $\th$-\emph{divides} over $A$ (This reduction in the classical case only uses Extension and automorphisms).  Suppose, towards a contradiction, that there is an $L(Ac)$-formula $\varphi(x,y,c)$ with $\varphi(a,b,c)=0$ and $\varphi(x,y,c)$ maximally $\th$-divides over $A$, say maximally strongly divides over $Ad$.  By base extension, we can find $d'\models \tp(d/Ac)$ for which $a\ind[m\th]_{Ad'}c$.  Since $\varphi(x,y,c)$ still maximally strongly divides over $Ad'$, we may assume $d=d'$, i.e. that $a\ind[m\th]_{Ad} c$.  By Lemma \ref{L:indchi}, we know that $\chi(c/Ada)=\chi(c/Ad)$.  Hence, we have that $\varphi(a,y,c)$ maximally strongly divides over $Ada$, and hence maximally $\th$-divides over $Aa$.  This contradicts the fact that $b\ind[m\th]_{Aa} c$.    
\end{proof}

\begin{df}
We say that $T$ is \textbf{maximally rosy} if $\ind[m\th]\ $ satisfies local character.
\end{df}

\begin{lemma}
In a maximally rosy theory, $\ind[m\th] \ \ $ satisfies Existence:  for all $A,B$, we have $A\ind[m\th]_B B$.
\end{lemma}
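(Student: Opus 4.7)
The plan is to derive Existence as a short formal consequence of local character together with the base monotonicity clause of Proposition \ref{L:thindprop}.

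Fix small parametersets $A$ and $B$. Since $T$ is maximally rosy by hypothesis, $\ind[m\th]$ satisfies local character, so there exists $B_0 \subseteq B$ (of cardinality bounded by the local-character cardinal associated with $A$) for which $A \ind[m\th]_{B_0} B$. This is the only ingredient I need to extract from the maximal rosiness assumption.

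Next, I would appeal to clause (3) of Proposition \ref{L:thindprop}, which asserts that if $E \subseteq F \subseteq G$ and $A \ind[m\th]_E G$, then $A \ind[m\th]_F G$. Taking $E := B_0$, $F := B$, and $G := B$, the chain condition $B_0 \subseteq B \subseteq B$ is trivially satisfied, and the hypothesis $A \ind[m\th]_{B_0} B$ has already been secured in the previous paragraph. The conclusion $A \ind[m\th]_B B$ therefore follows immediately.

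There is essentially no obstacle to this argument: the statement is a formal two-step consequence of local character plus base monotonicity, both of which are in hand. The only minor thing worth flagging is that clause (3) of Proposition \ref{L:thindprop}, though labelled simply ``Monotonicity,'' is in fact the base monotonicity statement (enlarging the base while keeping the right side fixed preserves independence), which is precisely what moves us from $B_0$ up to $B$ in the base position. Nothing further about the behaviour of $\ind[m\th]$ beyond what is already verified in Section 6 is needed.
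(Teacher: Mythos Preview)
Your proof is correct. The paper's own argument is a one-line contrapositive: if $A \nind[m\th]_B B$, then the constant sequence $(\tp(A/B))$ furnishes an arbitrarily long forking chain, contradicting local character in its chain formulation. You instead use local character in the witness-set form to obtain $B_0 \subseteq B$ with $A \ind[m\th]_{B_0} B$, and then apply clause~(3) of Proposition~\ref{L:thindprop} to move the base up to $B$. These are dual presentations of the same idea---unwinding why the constant chain contradicts Adler's definition of local character amounts to your monotonicity step read contrapositively---so there is no substantive divergence; your version simply makes explicit which already-verified property of $\ind[m\th]$ carries the weight.
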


\begin{proof}
If not, then the constant sequence $(\tp(A/B))$ contradicts local character.
\end{proof}

From existence, one can quite easily get that, in maximally rosy theories, $\ind[m\th]\ $ is an independence relation.  In particular, by Theorem 2.5 in \cite{Adler}, $\ind[m\th] \ $ satisfies symmetry in maximally rosy theories.

\begin{lemma}
In a maximally rosy theory, $\ind[m\th]\ $ satisfies Anti-reflexivity:  for all $A,B$, we have $A\ind[m\th]_B A$ if and only if $A\subseteq \acl(B)$.
\end{lemma}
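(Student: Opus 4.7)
The plan is to treat the two directions separately, both via short arguments using properties of $\ind[m\th]$ already in hand.

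For $(\Leftarrow)$, assume $A \subseteq \acl(B)$. The preceding lemma supplies existence, so $A \ind[m\th]_B B$; item (7) of Proposition \ref{L:thindprop} (applied with $C = B$) upgrades this to $A \ind[m\th]_B \acl(B)$; and since $A \subseteq \acl(B)$, finite character (item (5) of the same proposition) transfers the independence to $A \ind[m\th]_B A$, because every finite subtuple of $A$ on the right is in particular a finite subtuple of $\acl(B)$.

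For $(\Rightarrow)$, I would argue contrapositively. Suppose some $a \in A$ lies outside $\acl(B)$; then $r := \chi(a/B) > 0$ by Lemma 4.9 of \cite{BU}. The plan is to exhibit a single formula witnessing $a \nind[m\th]_B a$, after which finite character contradicts $A \ind[m\th]_B A$. Set
$$\varphi(x,y) := \min\!\bigl(\tfrac{2}{r}\, d(x,y),\, 1\bigr),$$
a continuous $[0,1]$-valued formula with $\varphi(a,a) = 0$, so the condition ``$\varphi(x,a) = 0$'' lies in $\tp(a/Ba)$. Taking $k = 2$, for any $a_1, a_2 \models \tp(a/B)$ with $d(a_1,a_2) \geq r$, the triangle inequality gives $\max(d(x,a_1), d(x,a_2)) \geq r/2$ for every $x$, so $\max_i \varphi(x,a_i) = 1$ for every $x$, and hence $\inf_x \max_i \varphi(x,a_i) = 1$. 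Thus $\varphi(x,a)$ strongly $r$-$2$-divides, and in particular maximally strongly divides, over $B$, so $\tp(a/Ba)$ maximally $\th$-forks over $B$ as required.

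The main content is the choice of $\varphi$: strong dividing demands $\inf_x\max_i = 1$, not merely positivity, so the distance function $d(x,y)$ itself does not suffice — one must rescale by $2/r$ so that the $r/2$ lower bound from the triangle inequality is boosted to the value $1$ after truncation. Everything else — existence, item (7), and finite character — is already available from Proposition \ref{L:thindprop} and the preceding lemma, so this scaling is really the only non-mechanical step.
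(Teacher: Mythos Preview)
Your proof is correct and follows essentially the same plan as the paper: for $(\Leftarrow)$, existence plus item~(7) of Proposition~\ref{L:thindprop}; for $(\Rightarrow)$, exhibit that the distance formula witnesses maximal strong dividing of $\tp(a/Ba)$ over $B$ and invoke finite character.

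The only notable difference is in the $(\Rightarrow)$ direction. You rescale $d(x,y)$ by $2/r$ so that $\inf_x\max_i\varphi(x,a_i)$ equals $1$ on the nose, thereby getting maximal strong dividing directly. The paper instead observes that the unrescaled formula $d(x,a)$ already maximally strongly divides over $B$ \emph{in the na\"ive sense} (positivity of $\inf_x\max_i$ rather than value~$1$), and then appeals to Lemma~\ref{L:na\"ive} to conclude. So the rescaling you flag as ``the only non-mechanical step'' is in fact avoidable via Lemma~\ref{L:na\"ive}, which was established precisely so that one need not manufacture the value~$1$ by hand. Separately, for $(\Leftarrow)$ the paper passes from $A\ind[m\th]_B\acl(B)$ to $A\ind[m\th]_B A$ by right monotonicity (trivial from the definition, since $\tp(A/AB)\subseteq\tp(A/\acl(B))$) rather than by finite character; your route via finite character is correct but a touch more circuitous.
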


\begin{proof}
First suppose that $A\nsubseteq \acl(B)$, i.e. that $\chi(a/B)>0$ for some $a\in A$.  Since the formula $d(x,a)$ maximally strongly divides over $B$ in the na\"ive sense, we see that $a\nind[m\th]_B a$ in the na\"ive sense.  Hence, by Lemma \ref{L:na\"ive}, we see that $a\nind[m\th]_B a$.  By finite character, we conclude that $A\nind[m\th]_B A$.  (Note that this direction did not use the maximal rosiness assumption.)

Now suppose $A\subseteq \acl(B)$.  By existence, we have $A\ind[m\th]_B B$.  By Lemma \ref{L:thindprop} (7), we see that $A\ind[m\th]_B \acl(B)$.  By monotonicity, we conclude that $A\ind[m\th]_B A$.
\end{proof}

\begin{rmk}
In maximally rosy theories, we have that $\ind[m\th] \ \ $ is a strict independence relation.  The fact that $\ind[m\th] \ $ satisfies finite character might make some want to favor it over $\thind$.  However, being maximally rosy seems like quite a strong condition on a theory.  For example, one can show that a classical rosy theory $T$ need not be maximally rosy when viewed as a continuous theory.
\end{rmk}

\begin{df}

\

\begin{enumerate}
\item Say that $\varphi(x,c)$ \textbf{$\th$-$\epsilon$-divides over $A$} if there is $B\supseteq A$ such that $\varphi(x,c)$ strongly-$\epsilon$-divides over $B$.  Say that $\pi(x)$ \textbf{$\th$-$\epsilon$-forks over $A$} if there exists $\lambda<\kappa(\m)$ and formulae $\varphi_i(x,c^i)$, $i<\lambda$, each of which $\th$-$\epsilon$-divide over $A$, and such that $\z(\pi(x))\subseteq \bigcup_{i=1}^n \z(\varphi_i(x))$.  Let $\ethind\ $     
denote the corresponding independence relation.  Say that $T$ is \textbf{$\epsilon$-rosy} if $\ethind \ $ satisfies local character.
\item Say $A\ind[s\th]_C B$, read $A$ is \textbf{strongly thorn-independent from} $B$ over $C$, if there exists $\epsilon>0$ such that $A\ethind_C B$.  Say that $T$ is \textbf{strongly rosy} if $\ind[s\th]\ $ satisfies local character.
\end{enumerate}
\end{df}

\begin{lemma}
Suppose $a\ethind_A b$.  If $\chi(b/A)\geq \epsilon$, then $\chi(b/Aa)\geq \epsilon$.  
\end{lemma}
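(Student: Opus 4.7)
The plan is to adapt the proof of Lemma \ref{L:indchi} wholesale to the $\ethind$ setting. The goal is to construct an $Aa$-indiscernible sequence of realizations of $\tp(b/Aa)$ of $d$-diameter at least $\epsilon$; such a sequence witnesses $\chi(b/Aa)\geq \epsilon$, as desired. I envision three steps.

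First, I would show that no $L(Ab)$-formula $\varphi(x,b)$ with $\varphi(a,b)=0$ can strongly $\epsilon$-divide over $A$ in the na\"ive sense. Indeed, if such a $\varphi$ existed, then by the rescaling argument from the proof of Lemma \ref{L:na\"ive}, saturation produces $\eta>0$ with $\inf_x\max_i\varphi(x,c_i)\geq \eta$ whenever $c_1,\ldots,c_k\models\tp(b/A)$ are pairwise $\epsilon$-apart; the rescaled formula $\frac{1}{\eta}\odot\varphi(x,b)$ then strongly $\epsilon$-divides over $A$ in the standard sense, witnessing $\th$-$\epsilon$-dividing (and hence $\th$-$\epsilon$-forking) of $\tp(a/Ab)$ over $A$, contradicting $a\ethind_A b$.

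Second, combined with the hypothesis $\chi(b/A)\geq \epsilon$, the conclusion of step one implies that for every $L(Ab)$-formula $\varphi(x,b)$ with $\varphi(a,b)=0$ and every $k<\omega$, there exist $b_1,\ldots,b_k\models\tp(b/A)$ with $d(b_i,b_j)\geq \epsilon$ for all $1\leq i<j\leq k$ together with some $c$ satisfying $\varphi(c,b_i)=0$ for all $i=1,\ldots,k$. Third, I would mimic the end of the proof of Lemma \ref{L:indchi} verbatim: by compactness the set
$$\Gamma := \bigcup_{i<\omega}\tp(a,b/A)(x,y_i)\cup \{d(y_i,y_j)\geq \epsilon \ | \ i<j<\omega\}$$
is satisfiable, say by $a_1$ and $(b_i)_{i<\omega}$; Ramsey's theorem together with compactness then yields an $Aa_1$-indiscernible sequence $J_2$ with $a_1 b'\models \tp(a,b/A)$ for each $b'\in J_2$ and $d(J_2)\geq \epsilon$; finally, picking any $b'\in J_2$ and any $\sigma\in\Aut(\m^{\eq}/A)$ with $\sigma(a_1)=a$ and $\sigma(b')=b$ produces the desired indiscernible $I:=\sigma(J_2)\in\i(b/Aa)$ with $d(I)\geq \epsilon$.

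The only step requiring any novelty is the first; one simply observes that the rescaling in Lemma \ref{L:na\"ive} never exploited the ``maximal'' feature of $\ind[m\th]$, only that the dividing is governed by a prescribed distance threshold---here $\epsilon$ in place of $\chi(c/D)$. The remaining two steps are a verbatim transcription of the proof of Lemma \ref{L:indchi} with $\chi(b/A)$ replaced by $\epsilon$, and no genuine obstacle is anticipated.
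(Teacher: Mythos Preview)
Your proposal is correct and takes exactly the approach the paper intends: the paper's proof consists of the single line ``Exactly as in the proof of Lemma~\ref{L:indchi},'' and your three steps are precisely that proof with $\chi(b/A)$ replaced by $\epsilon$ throughout. Your observation that the rescaling argument from Lemma~\ref{L:na\"ive} depends only on the fixed distance threshold (here $\epsilon$) and not on its being maximal is the one point worth noting, and you have it right.
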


\begin{proof}
Exactly as in the proof of Lemma \ref{L:indchi}.
\end{proof}

\begin{lemma}
$\ethind\ $ satisfies Partial Left Transitivity.
\end{lemma}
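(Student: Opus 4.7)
The plan is to mimic the proof of Partial Left Transitivity for $\ind[m\th]$ given a few paragraphs above, systematically replacing ``maximally strongly divides'' with ``strongly $\epsilon$-divides'' and ``maximally $\th$-divides'' with ``$\th$-$\epsilon$-divides'' throughout, and invoking the preceding lemma in place of Lemma \ref{L:indchi}.

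Concretely, suppose $a\ethind_A c$ and $b\ethind_{Aa} c$. The automorphism invariance and extension properties of $\ethind$ (which follow by the same arguments used for $\ind[m\th]$ in Proposition \ref{L:thindprop}, since $\th$-$\epsilon$-forking is defined by the same ``small union of zerosets'' schema) reduce the task to showing that no $L(Ac)$-formula $\varphi(x,y,c)$ with $\varphi(a,b,c)=0$ can $\th$-$\epsilon$-divide over $A$. Assume for contradiction that such a $\varphi(x,y,c)$ exists, and fix $Ad\supseteq A$ over which it strongly $\epsilon$-divides.

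Now apply base extension for $\ethind$ to the relation $a\ethind_A c$, yielding $d'\equiv_{Ac}d$ with $a\ethind_{Ad'}c$. Any automorphism witnessing $d'\equiv_{Ac}d$ fixes $\varphi(x,y,c)$ and carries $\tp(c/Ad)$ to $\tp(c/Ad')$, so strong $\epsilon$-dividing of $\varphi(x,y,c)$ is preserved from $Ad$ to $Ad'$; we relabel and assume $a\ethind_{Ad}c$ outright. By the definition of strong $\epsilon$-dividing we have $\chi(c/Ad)\geq\epsilon$, whence the preceding lemma (applied with base $Ad$ and parameter $a$) gives $\chi(c/Ada)\geq\epsilon$. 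Since realizations of $\tp(c/Ada)$ are a fortiori realizations of $\tp(c/Ad)$, specializing $x=a$ in the strong $\epsilon$-$k$-dividing inequality yields $\inf_y\max_{i\leq k}\varphi(a,y,c_i)=1$ for any $c_1,\ldots,c_k\models\tp(c/Ada)$ pairwise $\epsilon$-apart, so $\varphi(a,y,c)$ strongly $\epsilon$-divides over $Ada$ and hence $\th$-$\epsilon$-divides over $Aa$. Together with $\varphi(a,b,c)=0$, this contradicts $b\ethind_{Aa}c$.

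The main obstacle is confirming that the ingredient properties of $\ethind$ invoked here — automorphism invariance, extension, and base extension — really do carry over from the $\ind[m\th]$ proofs in Proposition \ref{L:thindprop}. These transfer routinely because the definitions differ only in whether one fixes $\epsilon$ or takes $\epsilon=\chi(c/B)$, and none of the abstract relation-theoretic manipulations of Proposition \ref{L:thindprop} touch that choice. The genuinely new ingredient — preservation of $\chi(c/\cdot)\geq\epsilon$ under $\ethind$-independent enlargements of the base — is precisely what the preceding lemma supplies, so once the transcription is in place the argument presents no further difficulty.
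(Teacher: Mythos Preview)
Your proposal is correct and follows exactly the approach the paper intends: the paper's own proof is the single sentence ``Follows from the previous lemma in the exact same way that Partial Left Transitivity for $\ind[m\th]$ followed from Lemma \ref{L:indchi},'' and you have faithfully written out that transcription, replacing maximal strong dividing with strong $\epsilon$-dividing and invoking the preceding lemma to get $\chi(c/Ada)\geq\epsilon$ from $\chi(c/Ad)\geq\epsilon$. Your additional care in checking that the auxiliary properties (extension, base extension, invariance) of $\ethind$ carry over from Proposition \ref{L:thindprop} is appropriate and correct.
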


\begin{proof}
Follows from the previous lemma in the exact same way that Partial Left Transitivity for $\ind[m\th]\ \ $ followed from Lemma \ref{L:indchi}.
\end{proof}

It is straightforward to check that $\ethind\ \ $ satisfies all of the other properties of a strict independence relation in an $\epsilon$-rosy theory.  In a strongly rosy theory, $\ind[s\th]\ $ satisfies all of the axioms of a strict countable independence relation.  To verify countable character, suppose that $A,B,C$ are small parametersets.  Suppose that $A_0\subseteq A$ and $B_0\subseteq B$ are countable.  Then $$A\ind[s\th]_C \ B \Rightarrow A\ethind_C \ B (\text{ some }\epsilon >0) \Rightarrow A_0\ethind_C \ B_0 \Rightarrow A_0\ind[s\th]_C \ B_0.$$  Next suppose that $A\nind[s\th]_C \ B$.  Then for every $n>0$, we have $A\nind[\th,\frac{1}{n}]_C \ B$.  Thus, for every $n>0$, we have $\varphi_n(x,b_n)\in \tp(A/BC)$ which $\th$-$\frac{1}{n}$-forks over $C$.  Let $a_n\in A$ be such that $\varphi_n(a_n,b_n)=0$.  Let $A_0=\bigcup_{n>0} a_n$ and let $B_0:=\bigcup_{n>0} b_n$.  Then $A_0\nind[s\th]_C \ B_0$.  Indeed, given $\epsilon>0$, choose $n$ such that $\frac{1}{n}<\epsilon$.  Then $\varphi_n(x,b_n)$ $\th$-$\epsilon$-forks over $C$ and $\varphi_n(x,b_n)\in \tp(A_0/B_0C)$.

\begin{lemma}
For any $\epsilon>0$, we have $$\ind\Rightarrow \ind[\th,\epsilon] \ \Rightarrow \ind[s\th]\ \Rightarrow\thind$$ and $$\ind \Rightarrow \ind[m\th] \ \Rightarrow \thind.$$  Consequently we have $$simple \Rightarrow \epsilon-rosy \Rightarrow strongly\ rosy \Rightarrow rosy$$ and $$simple \Rightarrow maximally\ rosy \Rightarrow rosy.$$
\end{lemma}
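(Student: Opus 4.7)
The plan is to verify the five pointwise implications among the ternary relations one at a time; once these are in place, the claimed rosiness hierarchy follows automatically, since $\ind_1\Rightarrow\ind_2$ as ternary relations means any cardinal $\kappa(A)$ witnessing local character of $\ind_1$ at $A$ also witnesses local character of $\ind_2$, and each of the four named rosiness notions is by definition local character of the corresponding relation.

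For $\ind\Rightarrow\ind[\th,\epsilon]$ and $\ind\Rightarrow\ind[m\th]$ I would repeat the strategy used for the maximal case earlier in the section: show that strong $\epsilon$-$k$-dividing and maximal strong $k$-dividing of $\varphi(x,c)$ over $B$ both imply that $\varphi(x,c)$ $k$-divides over $B$, by selecting $I=(c_i)_{i<\omega}\in\i(c/B)$ with $d(I)\geq\epsilon$ (possible because $\epsilon\leq\chi(c/B)$, with equality in the maximal case) and reading off the $k$-inconsistency of $\{\varphi(x,c_i)=0\}_{i<\omega}$ from the $\inf_x\max$-clause. Passing from dividing to forking via Lemma~\ref{L:finiteunion} and its straightforward $\th$-$\epsilon$-analogue, and using that a $D$-indiscernible sequence is automatically $C$-indiscernible when $C\subseteq D$ (so dividing over $D\supseteq C$ implies dividing over $C$), we conclude that $\th$-$\epsilon$-forking and maximal $\th$-forking over $C$ both imply forking over $C$. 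The middle step $\ind[\th,\epsilon]\Rightarrow\ind[s\th]$ is immediate from the definition of $\ind[s\th]$ as an existential quantification over $\epsilon$.

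For $\ind[m\th]\Rightarrow\thind$ and $\ind[s\th]\Rightarrow\thind$, I plan to apply Theorem~\ref{T:weakest} to each relation. For $\ind[m\th]$, Proposition~\ref{L:thindprop} already delivers automorphism invariance, monotonicity, partial right transitivity, extension, and the trivial clause $A\ind[m\th]_CBC\Leftrightarrow A\ind[m\th]_CB$; the algebraic-anti-reflexivity clause comes from Proposition~\ref{L:aclstrongdivide} applied with $\epsilon=\chi(b/C)$, which for countable $b\in\acl(AC)\setminus\acl(C)$ yields a formula in $\tp(A/bC)$ that maximally strongly divides over $C$ and therefore witnesses $A\nind[m\th]_Cb$. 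For $\ind[s\th]$ the same scheme applies, with structural clauses obtained from the properties of $\ethind$ noted at the end of the section and the algebraic-anti-reflexivity clause supplied by combining Proposition~\ref{L:aclstrongdivide} (in the range $\epsilon\leq\chi(b/C)$) with the $\chi$-preservation lemma for $\ethind$. The main obstacle I foresee is precisely this last verification: $A\nind[s\th]_Cb$ requires $\tp(A/bC)$ to $\th$-$\epsilon$-fork over $C$ for \emph{every} $\epsilon>0$, whereas Proposition~\ref{L:aclstrongdivide} directly supplies a strong $\epsilon$-dividing formula with parameter $b$ only in the range $\epsilon\leq\chi(b/C)$. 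My plan is to close the gap by shrinking any witnessing $\epsilon_0$ for $A\ind[s\th]_Cb$ down into the range $\epsilon_0\leq\chi(b/C)$ via an appropriate monotonicity of $\ind[\th,\epsilon]$ in $\epsilon$; if that fails, the fallback is to state a $\chi$-drop form of condition~(1) of Theorem~\ref{T:weakest} analogous to the $\ind[m\th]$-version appearing in the theorem just before Lemma~\ref{L:indchi} and verify the weaker hypothesis instead.
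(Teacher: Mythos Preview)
Your approach largely parallels the paper's, which is considerably terser: the paper handles $\ind\Rightarrow\ind[\th,\epsilon]$ and $\ind\Rightarrow\ind[m\th]$ by the one-line observation that strong $\epsilon$-dividing implies dividing, takes $\ind[\th,\epsilon]\Rightarrow\ind[s\th]$ as definitional, and dispatches the two final implications by invoking ``$\thind$ is weakest among the strict countable independence relations.'' Your unpacking via Theorem~\ref{T:weakest} is essentially the explicit form of that last appeal, so the strategies coincide.

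The obstacle you flag for $\ind[s\th]$ is real, and your proposed fix does not work. The monotonicity you are hoping for is precisely the one the paper states immediately after this lemma (``if $\epsilon<\epsilon'$, then strong $\epsilon$-dividing implies strong $\epsilon'$-dividing''), but recall that the very definition of strong $\epsilon'$-dividing of $\varphi(x,c)$ over $B$ includes the clause $\epsilon'\leq\chi(c/B)$. So the monotonicity only carries you up to $\chi(c/B)$, never past it, and in particular cannot be used to shrink a witnessing $\epsilon_0>\chi(b/C)$ for $A\ind[s\th]_C b$ down into the range where Proposition~\ref{L:aclstrongdivide} applies. Your fallback---replacing condition~(1) of Theorem~\ref{T:weakest} by a $\chi$-drop condition---faces the difficulty that the existing $\chi$-drop theorem in Section~6 concludes $\ind[I]\Rightarrow\ind[m\th]$, not $\ind[I]\Rightarrow\thind$; you would need a genuinely new argument. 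The paper's own proof, relying on $\ind[s\th]$ being a strict countable independence relation, does not address this point either: anti-reflexivity of $\ethind$ for $\epsilon>\chi(a/C)$ runs into the same wall. For the ``consequently'' part this is harmless, since one works under the strongly rosy hypothesis throughout; but the unconditional pointwise implication $\ind[s\th]\Rightarrow\thind$ is not fully justified by either argument.
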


\begin{proof}
It is clear that strong $\epsilon$-dividing implies dividing.  This takes care of each of the first implications. The second implication of the first line is true by definition.  The remaining two implications follow from the fact that $\thind$ is weakest amongst the strict countable independence relations.
\end{proof}

\noindent Note that if $\epsilon<\epsilon'$, then strong $\epsilon$-dividing implies strong $\epsilon'$-dividing, so $\epsilon'$-rosy implies $\epsilon$-rosy.  We thus make the following definition.

\begin{df}
$\th(T):=\sup\{\epsilon \ | \ T \text{ is }\epsilon\text{-rosy}\}$.
\end{df}

\begin{question}
Note that if $\th(T)>0$, then $T$ is strongly rosy.  Is the converse true?  What can we say about theories for which $\th(T)=1$?  
\end{question}

\begin{question}
It appears that the argument showing that $T_\U$ is not simple also shows that $T_\U$ is not maximally rosy.  Are there natural examples of maximally rosy theories or strongly rosy theories?
\end{question}

\section{Keisler Randomizations and Rosiness}

In \cite{Keisler}, Keisler introduced the notion of the \emph{randomization} of a theory $T$, denoted $T^R$.  The models of $T^R$ are essentially spaces of $M$-valued random variables, where $M\models T$.  In \cite{BK}, the randomization of a classical theory was phrased in the framework of continuous logic and its properties were further studied.  In \cite{Keisler}, \cite{BK}, and \cite{B3},  theorems of the form ``$T$ is $P$ if and only if $T^R$ is $P$'' were proven, where $P$ stands for any of the following properties:  $\omega$-categorical, $\omega$-stable, stable, NIP.  However, in \cite{B2}, it is shown that if $T$ is simple, unstable, then $T^R$ is not simple.  It is a natural question to ask whether $T$ is rosy if and only if $T^R$ is rosy with respect to finitary imaginaries.  Since the direction ``$T^R$ is $P$ implies $T$ is $P$'' is generally trivial, we tried to prove that if $T^R$ is rosy with respect to finitary imaginaries, then $T$ is rosy.  We were unable to prove that and instead were only able to prove that $T$ is rosy provided $T^R$ is maximally rosy with respect to finitary imaginaries.  We devote this section to proving this fact.

In this section, we assume that the reader is familiar with the basic properties of the Keisler randomization process.  We refer the reader to \cite{BK} for information about the randomization theory.  We also borrow notation from the aforementioned paper.  The set-up for this section differs from earlier parts of this paper.  Let $L$ be a countable classical signature and let $T$ be a complete $L$-theory.  Let $M\models T$ be a monster model.  Let $\kappa>|M|^{2^{\aleph_0}}$ be a cardinal and let $\bo{\m}$ be a monster model of $T^R$ (in the \emph{1-sorted language} $L^R$) which is $\kappa$-saturated and strongly $\kappa$-homogeneous.  By Corollary 2.8 of \cite{BK}, we may assume that $\bo{\m}$ is the structure associated to some full randomization $\K$ of $M$ based on the atomless finitely-additive measure space $(\Omega,\B,\mu)$.  We may further assume that $(\Omega,\B,\mu)$ is $\sigma$-additive; see Theorem 3.6 of \cite{B2}, noting that in our situation, the so-called auxiliary sort is the same as our boolean algebra sort.  Let $\bo{\m}_C$ be the substructure of $\bo{\m}$ which is the structure associated to the elements of $\K$ with countable range.  From now on, for any $a\in M$, we write $\bo{a}$ for the element of $\bo{\m}_C$ which is the equivalence class of the element of $\K$ with constant value $a$.  We do the same for tuples and parameter sets from $M$.  

\

\begin{lemma}\label{L:dettype}
Let $c$ be a finite tuple from $M$ and let $B\subseteq M$ be countable.  Suppose $C\in \K$ is such that $C\models \tp(\bo{c}/\bo{B})$.  Then $C(\omega)\models \tp(c/B)$ for almost all $\omega \in \Omega$.
\end{lemma}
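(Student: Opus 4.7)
The plan is to reduce the statement to countable additivity of the measure $\mu$, using the fact that the type of an element of $\K$ over $\bo{B}$ records exactly the measures of the events $\llbracket \varphi(x,\bo{b}) \rrbracket$ for $L$-formulas $\varphi$ and tuples $b$ from $B$.

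First I would fix an $L$-formula $\varphi(x,\bar{y})$ and a tuple $b$ from $B$, and observe that the condition ``$\mu(\llbracket \varphi(x,\bo{b}) \rrbracket) = r$'' is part of $\tp(\bo{c}/\bo{B})$ for the appropriate $r \in \{0,1\}$. Indeed, because $\bo{c}$ has constant value $c$, the event $\llbracket \varphi(\bo{c},\bo{b}) \rrbracket$ is either all of $\Omega$ (if $M \models \varphi(c,b)$) or empty (if $M \models \neg\varphi(c,b)$). Since $C \models \tp(\bo{c}/\bo{B})$, the same values of $\mu(\llbracket \varphi(C,\bo{b}) \rrbracket)$ are forced. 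Applying this argument to both $\varphi$ and $\neg\varphi$ then shows that for each such $(\varphi,b)$, there is a $\mu$-conull set $\Omega_{\varphi,b} \subseteq \Omega$ on which $M \models \varphi(C(\omega),b)$ if and only if $M \models \varphi(c,b)$.

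Next I would use countability. Since $L$ is countable and $B$ is countable, the collection of pairs $(\varphi,b)$ with $\varphi$ an $L$-formula and $b$ a tuple from $B$ is countable. Setting $\Omega_0 := \bigcap_{\varphi,b} \Omega_{\varphi,b}$, countable additivity (recall that $(\Omega,\B,\mu)$ has been arranged to be $\sigma$-additive) gives $\mu(\Omega_0) = 1$. For every $\omega \in \Omega_0$ and every $L$-formula $\varphi(x,\bar{y})$ with parameters $b$ from $B$, we have $M \models \varphi(C(\omega),b) \iff M \models \varphi(c,b)$, which is precisely the statement that $C(\omega) \models \tp(c/B)$.

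The main subtlety — really the only point where one has to be careful — is to verify that $\tp(\bo{c}/\bo{B})$ controls all of the relevant event measures, i.e. that the closed condition ``$\mu(\llbracket \varphi(x,\bo{b})\rrbracket) = r$'' is genuinely part of the randomization type in $L^R$. This is standard in the randomization framework of \cite{BK} (the randomizations of $L$-formulas are among the basic formulas of $L^R$), so the bookkeeping is routine; the substantive content of the lemma is the passage from measure-theoretic equality of events (with constants) to almost-sure pointwise type equality, which is exactly what the countable intersection above delivers.
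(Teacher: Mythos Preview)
Your proposal is correct and follows essentially the same argument as the paper: for each $L$-formula $\psi(x,b)$ in $\tp(c/B)$ the condition $\p\l\psi(X,\bo{b})\rr=1$ lies in $\tp(\bo{c}/\bo{B})$, hence $\p\l\psi(C,\bo{b})\rr=1$, and one then intersects the resulting conull sets using countability of $\tp(c/B)$ and $\sigma$-additivity of $\mu$. Your version is slightly more explicit in treating $\varphi$ and $\neg\varphi$ separately, but this is cosmetic since the complement case is automatically covered by running the paper's argument on $\neg\varphi$.
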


\begin{proof}
Let $\psi(x,b)\in \tp(c/B)$.  Then the condition $\p\l \psi(X,\textbf{b})\rr=1$ is in $\tp(\textbf{c}/\textbf{B})$, whence $\p\l \psi(C,\textbf{b})\rr=1$.  Since $\tp(c/B)$ is countable and $(\Omega,\B,\mu)$ is $\sigma$-additive, we achieve the desired result.  
\end{proof}

\

\begin{lemma}
Suppose $c$ is a finite tuple from $M$ and $B\subseteq M$ is a small parameterset such that $c\notin \acl(B)$.  Then $\chi(\bo{c}/\bo{B})=1$.
\end{lemma}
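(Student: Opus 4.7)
The plan is to exhibit an element of $\i(\bo{c}/\bo{B})$ of $d$-diameter equal to $1$; since the metric on $\bo{\m}$ is bounded by $1$, this will force $\chi(\bo{c}/\bo{B})=1$. The starting ingredient lives in the classical monster: because $c\notin\acl(B)$ in the classical sense, $\tp(c/B)$ has infinitely many realizations in $M$, and the standard Ramsey-plus-compactness argument produces an infinite $B$-indiscernible sequence $(c_n)_{n<\omega}$ of pairwise distinct realizations of $\tp(c/B)$ in $M$.

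I would then lift this sequence into the randomization by considering the constant random tuples $\bo{c_n}\in\bo{\m}_C\subseteq\bo{\m}$. Three facts have to be checked. First, each $\bo{c_n}$ realizes $\tp(\bo{c}/\bo{B})$: for any $L$-formula $\psi(x,y)$ and any parameter tuple $b$ from $B$, the quantity $\p\l\psi(\bo{c_n},\bo{b})\rr$ is the indicator of $M\models\psi(c_n,b)$, which agrees with the indicator of $M\models\psi(c,b)$ because $c_n\equiv_B c$; by the axiomatization of $T^R$ this determines $\tp(\bo{c_n}/\bo{B})=\tp(\bo{c}/\bo{B})$. Second, the sequence $(\bo{c_n})_{n<\omega}$ is $\bo{B}$-indiscernible in $\bo{\m}$: every atomic $L^R$-expression applied to $(\bo{c_{n_1}},\ldots,\bo{c_{n_k}})$ reduces to $\p\l\psi(c_{n_1},\ldots,c_{n_k},b)\rr\in\{0,1\}$, whose value depends only on whether $M\models\psi(c_{n_1},\ldots,c_{n_k},b)$, and hence only on the order type of $(n_1,\ldots,n_k)$ by $B$-indiscernibility of $(c_n)$ in $M$. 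Third, for $i\neq j$ the tuples $c_i$ and $c_j$ differ in at least one coordinate, so the corresponding constant random tuples disagree everywhere in that coordinate, yielding $d(\bo{c_i},\bo{c_j})=1$ under the max-of-coordinates metric on finite tuples.

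Assembling these three facts, $(\bo{c_n})_{n<\omega}\in\i(\bo{c}/\bo{B})$ with $d$-diameter $1$, so $\chi(\bo{c}/\bo{B})\geq 1$ and the reverse inequality is automatic. I do not anticipate a genuine obstacle: the argument is essentially the observation that passing from $c_n$ to its constant random tuple $\bo{c_n}$ transports classical $B$-indiscernibility and pairwise distinctness in $M$ to $\bo{B}$-indiscernibility and $d$-distance $1$ in $\bo{\m}$. The only point requiring mild care is the first bullet, namely that the randomization type of a constant random tuple is determined by the classical type of its value over $B$; this is immediate since for constants the expressions $\p\l\psi(\cdot,\cdot)\rr$ collapse to $\{0,1\}$-valued indicators of classical truth.
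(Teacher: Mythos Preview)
Your proposal is correct and follows essentially the same route as the paper: take a nonconstant $B$-indiscernible sequence $(c_i)$ of realizations of $\tp(c/B)$ in $M$, pass to the constant random tuples $\bo{c_i}$, and verify that this yields an element of $\i(\bo{c}/\bo{B})$ with pairwise distance $1$. The paper's write-up is slightly terser and invokes quantifier elimination for $T^R$ explicitly to conclude that the atomic values $\p\l\psi(\bo{c_{i_1}},\ldots,\bo{c_{i_n}},\bo{b})\rr$ determine the type over $\bo{B}$, whereas you appeal more vaguely to ``the axiomatization of $T^R$''; making the use of QE explicit would sharpen your justification for points one and two.
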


\begin{proof}
Let $(c_i : i\in \omega)$ be a nonconstant $B$-indiscernible sequence of realizations of $\tp(c/B)$.  Then setting $I:=(\bo{c_i} : i\in \omega)$, we see that $I\in \i(\bo{c}/\bo{B})$ with $d(I)=1$.  Indeed, since $T^R$ admits (strong) quantifier elimination (see \cite{BK}, Theorem 2.9), $\tp(\bo{c_{i_1}},\ldots,\bo{c_{i_n}}/\bo{B})$ is determined by the values of $\p\l \psi(\bo{c_{i_1}},\ldots,\bo{c_{i_n}})\rr$ as $\psi$ ranges over all $L$-formulae with $n$ free variables.  But $\l \psi(\bo{c_{i_1}},\ldots,\bo{c_{i_n}})\rr=\l \psi(\bo{c_{j_1}},\ldots,\bo{c_{j_n}})\rr$ whenever $i_1<\cdots<i_n<\omega$ and $j_1<\cdots<j_n<\omega$ by indiscernibility of $(c_i \ : i\in \omega)$.
\end{proof}




In order to prove the main lemma relating strong dividing in $T$ and maximal strong dividing in $T^R$, we first need to prove a Ramsey-theoretic fact for Boolean algebras equipped with a finitely-additive measure (Lemma \ref{L:ramsey} below).  We had a rather lengthly (nonstandard) proof of the desired fact, but we are grateful to Konstantin Slutsky for showing us the much simpler proof that appears below.

\begin{lemma}
Suppose $B$ is a boolean algebra and $\mu:B\to [0,1]$ is a finitely-additive measure.  Then for any $m>0$ and any set of distinct elements $\{a_1,\ldots,a_{2m}\}$ from $B$ with $\mu(a_i)\geq \frac{1}{m}$ for each $i\in \{1,\ldots,2m\}$, there exists $i,j\in \{1,\ldots,2m\}$ satisfying $\mu(a_i\wedge a_j)\geq \frac{1}{3m^2}.$
\end{lemma}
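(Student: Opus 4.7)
The plan is to use a standard second moment / Cauchy--Schwarz argument, after first reducing to a finite Boolean algebra. For the reduction, observe that the sub-Boolean algebra $B_0 \subseteq B$ generated by $\{a_1,\ldots,a_{2m}\}$ is finite, hence isomorphic to the power set $\mathcal{P}(X)$ of its finite set $X$ of atoms. Under this identification each $a_i$ corresponds to a subset of $X$, $\mu$ restricts to a finitely-additive (hence trivially $\sigma$-additive) measure on $\mathcal{P}(X)$, and integration against $\mu$ is just weighted summation over $X$. Note that $\mu(X) \leq 1$, and $\mu(X) > 0$ since $\mu(a_1) \geq 1/m$.

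Next, I would introduce the counting function $f := \sum_{i=1}^{2m} 1_{a_i} : X \to \{0,1,\ldots,2m\}$ and compute its first two moments:
\[
\int f \, d\mu \;=\; \sum_{i=1}^{2m} \mu(a_i) \;\geq\; 2, \qquad \int f^2 \, d\mu \;=\; \sum_{i} \mu(a_i) \;+\; 2\sum_{i<j}\mu(a_i \wedge a_j).
\]
Applying the Cauchy--Schwarz inequality to $f$ and the constant function $1$, and using $\mu(X)\leq 1$, yields $\int f^2 \geq (\int f)^2 / \mu(X) \geq (\int f)^2$. Setting $S := \int f \geq 2$, this gives
\[
2\sum_{i<j} \mu(a_i \wedge a_j) \;\geq\; S^2 - S \;=\; S(S-1) \;\geq\; 2,
\]
so $\sum_{i<j} \mu(a_i \wedge a_j) \geq 1$.

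Finally, a pigeonhole over the $\binom{2m}{2} = m(2m-1)$ unordered pairs produces some $i<j$ with $\mu(a_i \wedge a_j) \geq \tfrac{1}{m(2m-1)}$. The elementary inequality $m(2m-1) \leq 3m^2$, valid for all $m \geq 1$, then upgrades this to $\mu(a_i\wedge a_j) \geq \tfrac{1}{3m^2}$, as required. There is no serious obstacle in the argument; the one subtlety is to justify Cauchy--Schwarz for a merely finitely-additive measure, which is exactly what the preliminary reduction to the finite subalgebra on $X$ takes care of.
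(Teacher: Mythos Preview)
Your proof is correct. The paper's argument is shorter and more elementary: it argues by contradiction using the Bonferroni (inclusion--exclusion) inequality
\[
1 \;\geq\; \mu\Bigl(\bigvee_{i=1}^{2m} a_i\Bigr)\;\geq\;\sum_i \mu(a_i)\;-\;\sum_{i<j}\mu(a_i\wedge a_j)\;>\;2 - \binom{2m}{2}\cdot\frac{1}{3m^2}\;>\;1,
\]
which is immediately valid for finitely-additive measures and so needs no preliminary reduction to a finite subalgebra. Your second-moment / Cauchy--Schwarz computation yields the same quantitative conclusion $\sum_{i<j}\mu(a_i\wedge a_j)\geq 1$ (indeed, the sharper $S(S-1)/2$ for $S=\sum_i\mu(a_i)$), and the reduction to the finite subalgebra you include is exactly what is needed to make the integration rigorous. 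Both then finish by the same pigeonhole over the $m(2m-1)$ pairs, though the paper packages this inside the contradiction rather than stating it separately. Net effect: your route is a bit more machinery for the same bound, but it is a standard and perfectly valid alternative.
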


\begin{proof}
Suppose, towards a contradiction, that we have distinct elements $a_1,\ldots,a_{2m}$ from $B$ such that $\mu(a_i)\geq \frac{1}{m}$ for all $i\in \{1,\ldots,2m\}$ and yet $\mu(a_i\wedge a_j)<\frac{1}{3m^2}$ for all distinct $i,j\in \{1,\ldots,2m\}$.  By the inclusion-exclusion formula, we have 
\begin{alignat}{2}
1\geq \mu(a_1\vee \cdots \vee a_{2m})&\geq \sum_{i=1}^{2m} \mu(a_i)-\sum_{i<j}\mu(a_i\wedge a_j)\notag \\ \notag
 						       &> 2 -\binom{2m}{2}\frac{1}{3m^2}\notag \\
						       &> 2-\frac{2}{3} \notag \\
						       &>1.\notag
\end{alignat}
This contradiction finishes the proof of the lemma.
\end{proof}

\begin{lemma}
Suppose that $B$ is a boolean algebra and $\mu:B\to [0,1]$ is a finitely-additive measure.  Let $k\geq 2$ be a natural number and let $m>0$.  Then there exists a sufficiently large natural number $l:=l(k,m)$ and a positive natural number $c(k,m)$ such that whenever $\{a_1,\ldots,a_l\}$ is a set of $l$ distinct elements of $B$ for which $\mu(a_i)\geq \frac{1}{m}$ for each $i\in \{1,\ldots,l\}$, then there are distinct $i_1,\ldots,i_k\in \{1,\ldots,l\}$ such that $\mu(\bigwedge_{j=1}^k a_{i_j})\geq \frac{1}{c(k,m)}$. 
\end{lemma}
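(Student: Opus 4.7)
The plan is to generalize the inclusion--exclusion argument of the preceding lemma by computing a higher moment. First I would restrict attention to the finite boolean subalgebra $B_0 \subseteq B$ generated by $\{a_1, \ldots, a_l\}$, which has a finite set of atoms. For each atom $\alpha$, define the ``multiplicity'' $N(\alpha) := |\{\, i \, : \, \alpha \leq a_i \,\}|$. Since each $a_i$ is the join of the atoms below it, one has
\[ \sum_{i=1}^l \mu(a_i) \;=\; \sum_\alpha N(\alpha)\, \mu(\alpha) \;\geq\; \frac{l}{m}, \]
and similarly $\mu(a_{i_1} \wedge \cdots \wedge a_{i_k}) = \sum_{\alpha \leq a_{i_1} \wedge \cdots \wedge a_{i_k}} \mu(\alpha)$ for any tuple.

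Next I would bound the $k$-th moment $\sum_\alpha N(\alpha)^k \mu(\alpha)$ from below. Applying Jensen's inequality to the probability measure $\mu/\mu(1)$ on the atoms (using $\mu(1) \leq 1$) with the convex function $x \mapsto x^k$ gives
\[ \sum_\alpha N(\alpha)^k \mu(\alpha) \;\geq\; \frac{\bigl(\sum_\alpha N(\alpha)\, \mu(\alpha)\bigr)^k}{\mu(1)^{k-1}} \;\geq\; \Bigl(\frac{l}{m}\Bigr)^{\!k}. \]
On the other hand, $N(\alpha)^k$ is precisely the number of ordered tuples $(i_1, \ldots, i_k) \in \{1, \ldots, l\}^k$ with $\alpha \leq a_{i_j}$ for each $j$, so exchanging the order of summation yields
\[ \sum_{(i_1, \ldots, i_k) \in \{1, \ldots, l\}^k} \mu\bigl(a_{i_1} \wedge \cdots \wedge a_{i_k}\bigr) \;\geq\; \Bigl(\frac{l}{m}\Bigr)^{\!k}. \]

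Now I would split this sum according to whether the indices $i_1, \ldots, i_k$ are all distinct. The number of ordered $k$-tuples with at least one repeated index is at most $\binom{k}{2} l^{k-1}$, and each contributes at most $\mu(1) \leq 1$. Hence the contribution of tuples with distinct entries is at least $(l/m)^k - \binom{k}{2} l^{k-1}$; choosing $l = l(k,m) := k^2 m^k$ (say) makes this at least $\tfrac{1}{2}(l/m)^k$. Since there are at most $l^k$ ordered $k$-tuples in total, the pigeonhole principle produces distinct $i_1, \ldots, i_k$ with $\mu(a_{i_1} \wedge \cdots \wedge a_{i_k}) \geq \tfrac{1}{2 m^k}$, so we may set $c(k,m) := 2 m^k$.

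The main obstacle is justifying the moment inequality in the abstract boolean-algebra setting, where $\mu$ is merely finitely additive. The reduction to the finite subalgebra $B_0$ circumvents this cleanly: on $B_0$ the measure $\mu$ is completely specified by its values on the (finitely many) atoms, and the expressions $\sum_\alpha N(\alpha)^j \mu(\alpha)$ are honest finite weighted sums of nonnegative reals. Jensen then reduces to the elementary finite-dimensional power-mean inequality, and no $\sigma$-additivity or Stone-space machinery is required.
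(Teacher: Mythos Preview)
Your argument is correct and takes a genuinely different route from the paper. The paper proceeds by induction on $k$: the base case $k=2$ is the preceding inclusion--exclusion lemma, and for the inductive step it partitions the $a_i$ into $2\,c(k-1,m)$ blocks of size $l(k-1,m)$, applies the inductive hypothesis inside each block to produce $(k-1)$-fold intersections $b_i$ of measure $\geq 1/c(k-1,m)$, and then applies the $k=2$ case to the $b_i$ to find two whose intersection---a $2(k-1)$-fold, hence $\geq k$-fold, intersection of the original elements---has good measure. This yields constants satisfying roughly $c(k,m)\approx 3\,c(k-1,m)^2$, so $c(k,m)$ grows doubly exponentially in $k$.

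Your direct $k$-th-moment computation bypasses the induction entirely: passing to the finite subalgebra, bounding $\sum_\alpha N(\alpha)^k\mu(\alpha)\geq(l/m)^k$ via Jensen, reinterpreting this as $\sum_{(i_1,\ldots,i_k)}\mu(\bigwedge_j a_{i_j})$, and subtracting off the $\binom{k}{2}l^{k-1}$ tuples with a repeated index before pigeonholing. This is both shorter and gives far better quantitative bounds ($l(k,m)\approx k^2 m^k$ and $c(k,m)=2m^k$, polynomial in $m$ and singly exponential in $k$). The reduction to the finite subalgebra you flag is exactly the right way to make Jensen rigorous here. One cosmetic point: since the statement only assumes $m>0$, you should take $l(k,m)=\lceil k^2 m^k\rceil$ (or first replace $m$ by $\lceil m\rceil$), and note the case $m<1$ is vacuous.
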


\begin{proof}
By induction on $k$.  The previous lemma shows that the case $k=2$ holds by taking $l(2,m):=2m$ and $c(2,m):=3m^2$.  Now suppose that $k>2$.  We claim that the choices $l(k,m):=2\cdot c(k-1,m)\cdot l(k-1,m)$ and $c(k,m):=c(2,3c(k-1,m)^2)$ are as desired.  Let $l=l(k,m)$ and suppose that $\{a_1,\cdots,a_l\}$ is a set of $l$ distinct elements of $B$.  Then there is a set $\{b_i \ | \ 1\leq i \leq 2\cdot c(k-1,m)\}$ of distinct elements of $B$ such that:
\begin{itemize}
\item each $b_i=\bigwedge_{j=1}^{k-1}a_{i_j}$ for some distinct $i_1,\ldots,i_{k-1}\in \{1,\ldots,l\}$,
\item if $i,i'\in \{1,\ldots,2\cdot c(k-1,m)\}$ are distinct, then $i_j\not= i'_{j'}$ for all $j,j'\in \{1,\ldots,k-1\}$, and
\item $\mu(b_i)\geq \frac{1}{c(k-1,m)}$.  
\end{itemize}
By the case $k=2$, there are $i,j\in \{1,\ldots,2\cdot c(k-1,m)\}$ such that $\mu(b_i\wedge b_j)\geq \frac{1}{3c(k-1,m)^2}$.  This finishes the proof of the lemma.
\end{proof}

\begin{lemma}\label{L:ramsey}
Suppose $B$ is a boolean algebra and $\mu:B\to [0,1]$ is a finitely-additive measure.  Let $k\geq 2$ be a natural number and let $r\in (0,1)$.  Then there exists a sufficiently large natural number $l=l(k,r)$ such that whenever $\{a_1,\ldots,a_l\}$ is a set of $l$ distinct elements of $B$ for which $\mu(a_i)\geq r$ for each $i\in\{1,\ldots,l\}$, then there are distinct $i_1,\ldots,i_k\in \{1,\ldots,l\}$ such that $\mu(\bigwedge_{j=1}^k b_{i_j})>0.$
\end{lemma}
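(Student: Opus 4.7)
The plan is to obtain Lemma \ref{L:ramsey} as an essentially immediate corollary of the previous lemma, which already establishes a strictly stronger, quantitative statement in the special case where the lower bound on the $\mu(a_i)$ is of the form $\frac{1}{m}$.

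First I would fix $k\geq 2$ and $r\in(0,1)$ and choose a positive integer $m$ large enough that $\frac{1}{m}\leq r$; for instance $m:=\lceil 1/r\rceil$ works. I would then simply set $l(k,r):=l(k,m)$, where $l(k,m)$ is the quantity supplied by the previous lemma. Given any set $\{a_1,\ldots,a_l\}$ of $l$ distinct elements of $B$ with $\mu(a_i)\geq r$, the hypothesis $\mu(a_i)\geq \frac{1}{m}$ is automatic, so the previous lemma produces distinct indices $i_1,\ldots,i_k\in\{1,\ldots,l\}$ with $\mu\bigl(\bigwedge_{j=1}^k a_{i_j}\bigr)\geq \frac{1}{c(k,m)}$, which is in particular strictly positive.

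There is really no obstacle here, since all the combinatorial work (the two-set intersection estimate via inclusion–exclusion and the inductive construction of $l(k,m)$ and $c(k,m)$) has already been carried out. The only subtle point worth mentioning is that the explicit positive lower bound $\frac{1}{c(k,m)}$ is discarded in passing from the quantitative statement to Lemma \ref{L:ramsey}; the qualitative version is all that is needed for the application to Keisler randomizations later in the section.
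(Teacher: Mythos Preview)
Your proposal is correct and is precisely the paper's approach: the paper's proof reads ``Immediate from the preceding lemma,'' and you have simply spelled out the one-line reduction (pick $m$ with $\tfrac{1}{m}\le r$, set $l(k,r)=l(k,m)$, and note $\tfrac{1}{c(k,m)}>0$).
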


\begin{proof}
Immediate from the preceding lemma.
\end{proof}

\begin{lemma}\label{L:basiclemma2}
Suppose $\varphi(x,y)$ is an $L$-formula, $c$ is a finite tuple from $M$, and $B\subseteq M$ is countable.  Suppose $\varphi(x,c)$ strongly divides over $B$.  Then, for any $r\in (0,1)$, we have $r\dotminus \p\l \varphi(X,\bo{c})\rr$ maximally strongly divides over $\bo{B}$ in the na\"ive sense.  
\end{lemma}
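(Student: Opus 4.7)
The plan is to reduce maximal strong dividing (in the naïve sense) of $r\dotminus\p\l\varphi(X,\bo{c})\rr$ over $\bo{B}$ in $T^R$ to classical strong dividing of $\varphi(x,c)$ over $B$ in $T$, via a pointwise argument combining Lemma~\ref{L:dettype} with the boolean-algebra Ramsey result (Lemma~\ref{L:ramsey}). Since strong dividing forces $c\notin\acl(B)$, the preceding lemma gives $\chi(\bo{c}/\bo{B})=1$. Fix $k_0\geq 1$ witnessing classical strong dividing of $\varphi(x,c)$ over $B$ in $T$, and set $k:=l(k_0,r)\cdot k_0$, where $l(k_0,r)$ is the constant from Lemma~\ref{L:ramsey}. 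I will verify $k$-inconsistency for $r\dotminus\p\l\varphi(X,\bo{c})\rr$ with this $k$.

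Suppose, towards a contradiction, that we have $\bo{c}_1,\ldots,\bo{c}_k\models\tp(\bo{c}/\bo{B})$ with $d(\bo{c}_i,\bo{c}_j)\geq 1$ for $i\neq j$, together with some $X\in\bo{\m}_x$ satisfying $\p\l\varphi(X,\bo{c}_i)\rr\geq r$ for every $i$. Let $A_i:=\l\varphi(X,\bo{c}_i)\rr\in\B$; each $\mu(A_i)\geq r$. Applying Lemma~\ref{L:dettype} to each $\bo{c}_i$ and using that $d(\bo{c}_i,\bo{c}_j)=1$ forces $\bo{c}_i(\omega)\neq\bo{c}_j(\omega)$ almost everywhere, we find a full-measure set $\Omega_0\subseteq\Omega$ on which every $\bo{c}_i(\omega)$ realizes $\tp(c/B)$ and the tuples $\bo{c}_1(\omega),\ldots,\bo{c}_k(\omega)$ are pairwise distinct.

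Now I split by pigeonhole on the multiset $\{A_1,\ldots,A_k\}$: since $k=l(k_0,r)\cdot k_0$, either (i) at least $l(k_0,r)$ of the $A_i$'s are pairwise distinct in $\B$, or (ii) some value $A$ is repeated at $k_0$ indices $i_1,\ldots,i_{k_0}$. In case~(i), Lemma~\ref{L:ramsey} yields indices $i_1,\ldots,i_{k_0}$ with $\mu(A_{i_1}\cap\cdots\cap A_{i_{k_0}})>0$; intersecting with $\Omega_0$ and picking any $\omega$ in the result, the elements $\bo{c}_{i_1}(\omega),\ldots,\bo{c}_{i_{k_0}}(\omega)$ are pairwise distinct $B$-conjugates of $c$ for which $X(\omega)$ simultaneously satisfies $\varphi(X(\omega),\bo{c}_{i_j}(\omega))$, contradicting strong dividing of $\varphi(x,c)$ in $T$. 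In case~(ii), $A\cap\Omega_0$ has positive measure, and any $\omega$ there, together with the $k_0$ indices of repetition, yields the same classical contradiction.

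The main technical obstacle is that Lemma~\ref{L:ramsey} requires the boolean elements in its hypothesis to be \emph{distinct}, whereas the sets $A_i$ may well coincide even when the corresponding $\bo{c}_i$'s are distance-one apart (the value $A_i$ depends only on how $X$ and $\bo{c}_i$ interact through $\varphi$). The pigeonhole dichotomy is designed precisely to sidestep this: the distinct-values branch feeds into Ramsey, while the repeated-value branch bypasses Ramsey entirely, because positive measure of the common set $A$ together with the pointwise distinctness furnished by $\Omega_0$ already produces a $k_0$-tuple witnessing classical inconsistency.
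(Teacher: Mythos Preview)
Your proof is correct and follows essentially the same route as the paper: reduce to the pointwise picture via Lemma~\ref{L:dettype}, then use the boolean Ramsey lemma (Lemma~\ref{L:ramsey}) to extract $k_0$ indices whose events intersect in positive measure, contradicting classical strong $k_0$-dividing at any $\omega$ in that intersection.

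The one genuine difference is your handling of the distinctness hypothesis in Lemma~\ref{L:ramsey}. The paper simply takes $l=l(k_0,r)$ conjugates and applies Lemma~\ref{L:ramsey} directly to the events $\l\varphi(X,C_i)\rr$, silently ignoring that these events need not be distinct in $\B$. You noticed this gap and patched it by inflating the constant to $l(k_0,r)\cdot k_0$ and running a pigeonhole dichotomy: either enough distinct events appear to feed Lemma~\ref{L:ramsey}, or some single event of measure $\geq r$ already carries $k_0$ indices. This is a legitimate and clean fix. (One could alternatively observe that the distinctness hypothesis in Lemma~\ref{L:ramsey} and its two predecessors is inessential---if any two of the $a_i$ coincide, their intersection already has measure $\geq r$---so the paper's shorter argument does go through; but your version makes the logic explicit and costs only a larger constant.)
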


\begin{proof}
Let $k$ be such that $\varphi(x,c)$ strongly $k$-divides over $B$.  Let $l=l(k,r)$ be as in Lemma \ref{L:ramsey}.  We show that $r\dotminus \p \l \varphi(X,\bo{c})\rr$ maximally strongly $l$-divides over $\bo{B}$ in the na\"ive sense.  Let $C_1,\ldots,C_l\models \tp(\bo{c}/\bo{B})$ be $1$-apart.  Then, for almost all $\omega \in \Omega$, $C_1(\omega),\ldots,C_l(\omega)$ are $l$ distinct realizations of $\tp(c/B)$.  Fix $X\in \K^n$, where $n:=|x|$.  Suppose, towards a contradiction, that $r\dotminus \p \l \varphi(X,C_i)\rr=0$ for all $i=1,\ldots,l$.  Then by the defining property of $l$, we see that there are $i_1,\ldots,i_k$ so that $$\{\omega \in \Omega \ | \ \m \models \varphi(X(\omega),C_{i_j}(\omega)), \ j=1,\ldots,k\}$$ has positive measure.  A positive measure subset of these $\omega$'s has the further property that $C_{i_1}(\omega),\ldots,C_{i_k}(\omega)$ are $k$ distinct realizations of $\tp(c/B)$.  This then contradicts the fact that $\varphi(x,c)$ strongly $k$-divides over $B$. 
\end{proof}

\begin{lemma}\label{L:randthdivide}
Suppose $\varphi(x,y)$ is an $L$-formula, $c$ is a tuple from $M$, and $B\subseteq M$ is countable.  Suppose $\varphi(x,c)$ $\th$-divides over $B$.  Then $r\dotminus \p\l \varphi(X,\bo{c})\rr$ maximally $\th$-divides over $\bo{B}$ in the na\"ive sense for any $r\in (0,1]$.  
\end{lemma}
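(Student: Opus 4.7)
The plan is to reduce the claim to a direct application of Lemma \ref{L:basiclemma2}. By hypothesis, $\varphi(x,c)$ $\th$-divides over $B$, so by the classical definition there is a parameter set $D \supseteq B$ in $M$ and some $k \geq 2$ such that $\varphi(x,c)$ strongly $k$-divides over $D$. Since Lemma \ref{L:basiclemma2} requires a \emph{countable} base, my first task is to replace $D$ by a countable $D_0 \in [B, D]$ over which $\varphi(x,c)$ still strongly $k$-divides.

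For this replacement I would run the standard compactness argument that is the classical analogue of Lemma \ref{L:countable}. The partial type asserting that $y_1, \dots, y_k$ are pairwise distinct realizations of $\tp(c/D)$ for which $\bigwedge_i \varphi(x, y_i)$ is consistent is inconsistent, so by compactness there is a finite tuple $d \subseteq D$ and a single $L(d)$-formula $\psi(y, d) \in \tp(c/D)$ such that any $k$ pairwise distinct realizations of $\psi(y, d)$ already make $\{\varphi(x, c_i) : 1 \leq i \leq k\}$ inconsistent. Setting $D_0 := B \cup d$, the set $D_0$ is countable (since $B$ is); we have $c \notin \acl(D_0)$ because $\acl(D_0) \subseteq \acl(D)$; and every tuple of $k$ pairwise distinct realizations of $\tp(c/D_0)$ satisfies $\psi(y, d)$ and hence is $k$-inconsistent with respect to $\varphi(x, \cdot)$. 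Therefore $\varphi(x, c)$ strongly $k$-divides over $D_0$.

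Now Lemma \ref{L:basiclemma2} applies with $D_0$ in place of $B$, yielding that $r \dotminus \p\l \varphi(X, \bo{c}) \rr$ maximally strongly divides over $\bo{D_0}$ in the na\"{\i}ve sense for every $r \in (0,1]$ (the proof of Lemma \ref{L:basiclemma2} goes through unchanged at $r = 1$, since the Ramsey step needs only $r > 0$). Because $\bo{D_0} \supseteq \bo{B}$, the set $\bo{D_0}$ witnesses that $r \dotminus \p\l \varphi(X, \bo{c}) \rr$ maximally $\th$-divides over $\bo{B}$ in the na\"{\i}ve sense, which is exactly the conclusion.

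The only even mildly subtle point is the reduction to a countable $D_0$; after that, the full strength of the lemma comes entirely from Lemma \ref{L:basiclemma2} and the trivial inclusion $\bo{B} \subseteq \bo{D_0}$, so I do not expect any real obstacle.
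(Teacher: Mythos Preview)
Your proof is correct and follows essentially the same approach as the paper. The paper's proof is simply ``Suppose $\varphi(x,c)$ strongly divides over $Bd$. Then $r\dotminus \p\l \varphi(X,\bo{c})\rr$ maximally strongly divides over $\bo{Bd}$ in the na\"ive sense, whence it maximally $\th$-divides over $\bo{B}$ in the na\"ive sense,'' implicitly using the classical fact (cf.\ the Remark after Lemma~6.6 or \cite[Remark 2.1.2]{Alf}) that the witnessing parameter set for $\th$-dividing may be taken to be $B$ together with a \emph{finite} tuple $d$; you just made this compactness reduction explicit. Your side remark about the case $r=1$ is also apt, since Lemma~\ref{L:basiclemma2} is stated only for $r\in(0,1)$.
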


\begin{proof}
Suppose $\varphi(x,c)$ strongly divides over $Bd$.  Then $r\dotminus \p\l \varphi(X,\bo{c})\rr$ maximally strongly divides over $\bo{Bd}$ in the na\"ive sense, whence $r\dotminus \p\l \varphi(X,\bo{c})\rr$ maximally $\th$-divides over $\bo{B}$ in the na\"ive sense.
\end{proof}

\begin{thm}
Suppose $a$ is a tuple from $M$ and $B\subseteq C \subseteq M$ are parameter sets such that $B$ is countable and $C$ is small.  Then $\bo{a} \ind[m\th]_{\bo{B}} \bo{C}$ implies that $a \thind_B C$.
\end{thm}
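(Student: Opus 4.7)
The approach is to prove the contrapositive: assuming $\tp(a/C)$ thorn-forks over $B$ in the classical theory $T$, I will show that $\tp(\bo{a}/\bo{C})$ maximally thorn-forks over $\bo{B}$ in $T^R$. By the classical characterization of thorn-forking together with compactness in $T$, there exist a classical formula $\varphi(x,c) \in \tp(a/C)$ with $c$ a finite tuple from $C$, and classical formulas $\psi_1(x,e_1), \ldots, \psi_n(x,e_n)$ with parameters from $M$, each thorn-dividing over $B$, such that $\varphi(x,c) \vdash \bigvee_{i=1}^n \psi_i(x,e_i)$ in $T$.

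The first step is to arrange that the thorn-dividing of each $\psi_i(x,e_i)$ over $B$ is witnessed by a \emph{countable} parameter set $D_i \supseteq B$. This is the classical analog of Lemma \ref{L:countable}: strong $k$-dividing of $\psi_i(x,e_i)$ over any $D \supseteq B$ is, by compactness in $T$, already witnessed by a finite $d_i \subseteq D$ together with a single formula $\theta_i(y,d_i) \in \tp(e_i/D)$; thus we may replace $D$ by the countable set $D_i := B \cup d_i$, over which $\psi_i(x,e_i)$ still strongly divides (the non-algebraicity $e_i \notin \acl(D_i)$ is automatic since $\tp(e_i/D_i) \supseteq \tp(e_i/D)$ has at least as many realizations). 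With this countability in hand, Lemma \ref{L:randthdivide} applies to each $\psi_i(x,e_i)$ and yields an $L^R$-formula $\Phi_i(X) := \tfrac{1}{n} \dotminus \p\l \psi_i(X,\bo{e_i}) \rr$ that maximally $\th$-divides over $\bo{B}$ in the naïve sense.

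It remains to verify the covering $\z(\tp(\bo{a}/\bo{C})) \subseteq \bigcup_{i=1}^n \z(\Phi_i)$. Since $a$ realizes $\varphi(x,c)$ in $M$, the condition ``$\p\l \varphi(X,\bo{c}) \rr = 1$'' belongs to $\tp(\bo{a}/\bo{C})$, so any realization $X$ of this type satisfies $\p\l \varphi(X,\bo{c}) \rr = 1$. Because $\varphi(x,c) \vdash \bigvee_i \psi_i(x,e_i)$ holds pointwise in $M$, the boolean events satisfy $\l \varphi(X,\bo{c}) \rr \subseteq \bigcup_i \l \psi_i(X,\bo{e_i}) \rr$ up to null sets, and the union bound gives $\sum_{i=1}^n \p\l \psi_i(X,\bo{e_i}) \rr \geq 1$; hence $\p\l \psi_i(X,\bo{e_i}) \rr \geq \tfrac{1}{n}$ for some $i$, so $\Phi_i(X) = 0$. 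This displays $\tp(\bo{a}/\bo{C})$ as maximally $\th$-forking over $\bo{B}$ in the naïve sense, and Lemma \ref{L:na\"ive} converts this to ordinary maximal $\th$-forking, yielding $\bo{a} \nind[m\th]_{\bo{B}} \bo{C}$ as required.

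The main technical point is the countability reduction for each $D_i$; once that is in hand, the rest is a clean union-bound calculation transferring a classical disjunction of thorn-dividing formulas to a finite zeroset cover by randomization formulas.
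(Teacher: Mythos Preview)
Your proof is correct and follows essentially the same approach as the paper: both prove the contrapositive by passing from a classical thorn-forking cover $\varphi(x,c)\vdash\bigvee_i\psi_i(x,e_i)$ to the randomized cover via the union bound $\sum_i\p\l\psi_i(X,\bo{e_i})\rr\geq 1$, then invoke Lemma~\ref{L:randthdivide} and Lemma~\ref{L:na\"ive}. The only cosmetic differences are that you make the countability reduction for the witnessing parameter sets $D_i$ explicit (the paper absorbs this into the classical definition of $\th$-dividing and the hypothesis of Lemma~\ref{L:randthdivide}), and you cover the zeroset of the full type $\tp(\bo{a}/\bo{C})$ rather than of the single formula $1-\p\l\varphi(X,\bo{c})\rr$; either works.
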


\begin{proof}
Suppose $\varphi(x,c)\in \tp(a/C)$ $\th$-forks over $B$.  Then there are $L$-formulae $\varphi_1(x,c_1),\ldots,\varphi(x,c_n)$, each of which $\th$-divide over $B$, so that $$M \models \forall x (\varphi(x,c) \to \bigvee_{i=1}^n \varphi_i(x,c_i)).$$  We then have 
$$\z(1-\p \l \varphi(X,\bo{c})\rr) \subseteq \bigcup_{i=1}^n \z(\frac{1}{n}\dotminus \p \l \varphi_i(X,\bo{c_i})\rr),$$ and since each of $\frac{1}{n}\dotminus \p \l \varphi_i(X,\bo{c_i})\rr$ maximally $\th$-divides over $\bo{B}$ in the na\"ive sense by Lemma \ref{L:randthdivide}, we see that $1-\p \l \varphi(X,\bo{c})\rr$ maximally $\th$-forks over $\bo{B}$ in the na\"ive sense.  Since the condition $``1-\p \l \varphi(X,\bo{c})\rr=0"$ is in $\tp(\bo{a}/\bo{C})$, it follows that $\bo{a}\nind[m\th]_{\bo{B}} \bo{C}$ in the na\"ive sense, and hence $\bo{a}\nind[m\th]_{\bo{B}} \bo{C}$ by Lemma \ref{L:na\"ive}.
\end{proof}

\begin{cor}\label{T:realrosy}
Suppose $T^R$ is maximally real rosy.  Then $T$ is real rosy.
\end{cor}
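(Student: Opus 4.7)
My plan is to apply the preceding theorem directly to transfer local character from $T^R$ down to $T$. Fix a finite real tuple $a$ from $M$ and a small parameter set $C\subseteq M$; the goal is to exhibit a countable $B\subseteq C$ such that $a\thind_B C$, which (together with countable character of $\thind$) yields local character of $\thind$ on the real sorts of $T$.

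First I would pass to $\bo{\m}$ by considering the finite tuple $\bo{a}$ and the small set $\bo{C}\subseteq\bo{\m}$. By the hypothesis that $T^R$ is maximally real rosy, $\ind[m\th]$ in $T^R$ satisfies local character on the real sorts. Applied to $\bo{a}$ and $\bo{C}$, this yields a countable $D\subseteq\bo{C}$ with $\bo{a}\ind[m\th]_D\bo{C}$, and since every element of $\bo{C}$ is of the form $\bo{c}$ for some $c\in C$, we can write $D=\bo{B}$ for a countable $B\subseteq C$. The preceding theorem now applies to the data $(a,B,C)$: its hypotheses demand a finite tuple from $M$, parameter sets $B\subseteq C\subseteq M$ with $B$ countable and $C$ small, and the relation $\bo{a}\ind[m\th]_{\bo{B}}\bo{C}$, and all of these are in hand. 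The conclusion delivers $a\thind_B C$, which is exactly the needed instance of local character.

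There is essentially no obstacle beyond the preceding theorem: no new ingredient is required, and the corollary is just a clean transfer. The only small detail worth noting is that the local character cardinal for $\ind[m\th]$ in $T^R$ can be taken countable, so as to meet the countability hypothesis of the preceding theorem; since $L^R$ is countable whenever $L$ is, this is automatic under the standard convention in continuous logic that local character over a countable signature delivers countable bases.
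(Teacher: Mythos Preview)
Your proposal is correct and follows essentially the same route as the paper: pass from $a,C$ to $\bo{a},\bo{C}$, invoke local character of $\ind[m\th]$ in $T^R$ to obtain a countable $\bo{B}\subseteq\bo{C}$, and then apply the preceding theorem to conclude $a\thind_B C$. The paper's proof is the same two-line argument; your additional remarks about countable character and the countability of $L^R$ are reasonable elaborations but are not needed beyond what the paper already assumes.
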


\begin{proof}
Let $a$ be a tuple from $M$ and let $C\subseteq M$ be small.  Since $T^R$ is maximally real rosy, there is a countable $\bo{B}\subseteq \bo{C}$ so that $\bo{a} \ind[m\th]_{\bo{B}} \bo{C}$.  By the preceding theorem, we see that $a\thind_B C$, whence it follows that $T$ is real rosy. 
\end{proof}

We now try to extend Corollary \ref{T:realrosy} to include imaginaries.  We first note that given a $L$-formula $E(x,y)$ which defines an equivalence relation on $M_X$, the $L^R$-formula $\rho_E(X,Y):=\p\l \neg E(x,y)\rr$, defines a pseudometric on $\bo{\m}_X$.  It follows that we can associate to every element $e$ of $M^{\eq}$ an element $\tau(e)$ of $\bo{\m}^{\feq}$.  Indeed, suppose that $c$ is a finite tuple from $M$ and $\pi_E(c)$ is its equivalence class under the $0$-definable equivalence relation $E$.  Let $\pi_{\rho_E}(\bo{c})$ denote the equivalence class of $\bo{c}$ under the equivalence relation $\rho_E=0$.  We then set $\tau(\pi_E(c)):=\pi_{\rho_E}(\bo{c})$.

Suppose $\psi(x_1,\ldots,x_m)$ is an $L^{\eq}$-formula, where each $x_i$ is a variable ranging over $E_i$-equivalence classes.  Fix $r\in [0,1]$.  We then set $\widetilde{\psi}_r(X_1,\ldots,X_m)$ to be the $(L^R)^{\feq}$-formula $$\inf_{X^1}\cdots \inf_{X^m}\max(\max_{1\leq i \leq m} (d(\pi_{\rho_{E_i}}(X^i),X_i)),r\dotminus\p \l\psi^{\eq}(X^1,\ldots,X^m)\rr).$$  (Recall that $\psi^{\eq}(x^1,\ldots,x^m)$ is an $L$-formula such that, for all $a^1,\ldots,a^m$, we have $M^{\eq}\models \psi(\pi_{E_1}(a^1),\ldots,\pi_{E_m}(a^m))$ if and only if $M\models \psi^{\eq}(a^1,\ldots,a^m)$.)

\begin{lemma}\label{L:imagtype2}
Suppose $e\in M^{\eq}$ and $B\subseteq M^{\eq}$ is countable.  Suppose $C\in \k$ is such that $\pi_{\rho_E}(C)\models \tp(\tau(e)/\tau(B))$.  Then $\pi_E(C(\omega))\models \tp(e/B)$ for almost all $\omega \in \Omega$. 
\end{lemma}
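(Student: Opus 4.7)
\medskip

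The plan is to mimic the proof of Lemma \ref{L:dettype}, with each classical formula in $\tp(e/B)$ being translated into its randomized counterpart via the $\widetilde{\psi}_r$ construction. Since $T$ is countable and $B$ is countable, the type $\tp(e/B)$ is a countable collection of classical $L^{\eq}$-formulae $\psi(x,\bar b)$ (with $\bar b$ a finite tuple from $B$) which hold of $e$. It will then suffice to show that for each such formula, $M^{\eq}\models\psi(\pi_E(C(\omega)),\bar b)$ holds off a null set, and take a countable intersection.

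Fix such a $\psi(x,\bar b)$. First I would verify that the condition ``$\widetilde{\psi}_1(X,\tau(\bar b))=0$'' lies in $\tp(\tau(e)/\tau(B))$: indeed, plugging in the canonical representatives $X^1:=\bo{c}$ (where $e=\pi_E(c)$) and $X^i:=\bo{b_{i-1}^0}$ (where $b_{i-1}=\pi_{E_{i-1}}(b_{i-1}^0)$) into the infimum defining $\widetilde{\psi}_1$ gives $d(\pi_{\rho_{E_i}}(X^i),X_i)=0$ for each $i$ and $\p\llbracket\psi^{\eq}(\bo{c},\bo{b_1^0},\dots)\rrbracket=1$, so the maximum equals $0$, and the infimum is $0$ as well. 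By hypothesis, we therefore get $\widetilde{\psi}_1(\pi_{\rho_E}(C),\tau(\bar b))=0$.

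The next step is to use the saturation of $\bo{\m}$ to attain this infimum: there exist $C^*, B_1^*,\dots,B_m^*\in\K$ such that $\pi_{\rho_E}(C^*)=\pi_{\rho_E}(C)$, $\pi_{\rho_{E_i}}(B_i^*)=\tau(b_i)=\pi_{\rho_{E_i}}(\bo{b_i^0})$ for each $i$, and $\p\llbracket\psi^{\eq}(C^*,B_1^*,\dots,B_m^*)\rrbracket=1$. Unpacking the definition of $\rho_E$ as $\p\llbracket\neg E(x,y)\rrbracket$, each of these equalities yields, for almost every $\omega\in\Omega$: $E(C(\omega),C^*(\omega))$ holds, each $E_i(B_i^*(\omega),b_i^0)$ holds (so $\pi_{E_i}(B_i^*(\omega))=b_i$), and $M\models\psi^{\eq}(C^*(\omega),B_1^*(\omega),\dots,B_m^*(\omega))$. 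Putting these together through the definitional equivalence $M^{\eq}\models\psi(\pi_{E_1}(a^1),\dots)\iff M\models\psi^{\eq}(a^1,\dots)$, I conclude that $M^{\eq}\models\psi(\pi_E(C(\omega)),\bar b)$ off a null set.

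Intersecting the resulting countably many null sets over all $\psi\in\tp(e/B)$ yields the claim. The main technical point to be careful about is the attainment of the infimum in step three, which relies on the $\kappa$-saturation of $\bo{\m}$ applied to a bounded definable predicate; this is standard for continuous logic but is the only place where the argument departs from the purely syntactic reasoning of Lemma \ref{L:dettype}. Everything else is a routine translation between the $L^{\eq}$-language for $M$ and the $(L^R)^{\feq}$-language for $\bo{\m}$ via $\tau$ and $\widetilde{\psi}_r$, together with the $\sigma$-additivity of $\mu$.
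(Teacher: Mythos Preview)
Your proposal is correct and follows essentially the same route as the paper: fix $\psi(x,b)\in\tp(e/B)$, verify that $\widetilde{\psi}_1(\tau(e),\tau(b))=0$ via canonical representatives, transfer this to $\pi_{\rho_E}(C)$ by hypothesis, attain the infimum to obtain witnesses $D,F$ (your $C^*,B_i^*$), unpack pointwise almost everywhere, and conclude by $\sigma$-additivity over the countable type. The only difference is that you make explicit the role of saturation in attaining the infimum, which the paper leaves implicit in the phrase ``It follows that there are $D,F$\ldots''.
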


\begin{proof}
Fix $\psi(x,b)\in \tp(e/B)$.  Let $e'$ and $b'$ be representatives of the classes of $e$ and $b$ respectively.  Then $M\models \psi^{\eq}(e',b')$, whence $$\p\l \psi^{\eq}(\bo{e'},\bo{b'})\rr=1.$$  It thus follows that $\widetilde{\psi}_1(\tau(e),\tau(b))=0$, so $\widetilde{\psi}_1(\pi_{\rho_E}(C),\tau(b))=0$.  It follows that there are $D,F$ such that $\pi_{\rho_E}(C)=\pi_{\rho_E}(D)$, $\tau(b)=\pi_{\rho}(F)$, and $\p\l \psi^{\eq}(D,F)\rr=1$.  So for almost all $\omega$, $M^{\eq}\models \psi(\pi_E(D(\omega)),\pi(F(\omega))$, whence $M^{\eq}\models \psi(\pi_E(C(\omega)),b)$ for almost all $\omega$.  The lemma follows from the fact that $\tp(e/B)$ is countable.  
\end{proof}

\begin{lemma}
Suppose $c\in M^{\eq}$ and $B\subseteq M^{\eq}$ is a small parameterset such that $c\notin \acl(B)$.  Then $\chi(\tau(c)/\tau(B))=1$.
\end{lemma}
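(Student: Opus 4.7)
The plan is to adapt the proof of the preceding lemma (the real-tuple version of this statement) to the imaginary setting. Since $c\notin \acl(B)$ in $M^{\eq}$, standard arguments produce a $B$-indiscernible sequence $(c_i)_{i<\omega}$ of realizations of $\tp(c/B)$ in $M^{\eq}$ with $c_i\neq c_j$ whenever $i<j<\omega$. Picking representatives $c_i'\in M$ with $\pi_E(c_i')=c_i$ (where $E$ is the $0$-definable equivalence relation corresponding to the sort of $c$), I will form the sequence $I:=(\tau(c_i))_{i<\omega}$ in $\bo{\m}^{\feq}$ and verify that $I\in \i(\tau(c)/\tau(B))$ with $d(I)=1$, from which $\chi(\tau(c)/\tau(B))=1$ is immediate (using that the metric is bounded by $1$).

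The distance computation is straightforward: for $i\neq j$, we have
$$d(\tau(c_i),\tau(c_j))=\rho_E(\bo{c_i'},\bo{c_j'})=\p\l \neg E(\bo{c_i'},\bo{c_j'})\rr,$$
and since $c_i\neq c_j$ in $M^{\eq}$ gives $M\models \neg E(c_i',c_j')$, the event $\neg E(\bo{c_i'},\bo{c_j'})$ holds on every $\omega\in\Omega$, so this probability equals $1$.

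The main obstacle, exactly as in the preceding lemma, is verifying $\tau(B)$-indiscernibility of $I$ together with showing that each $\tau(c_i)$ realizes $\tp(\tau(c)/\tau(B))$. To handle this, I would exploit the fact that the type of an $n$-tuple from the $\rho_E$-sort in $\bo{\m}^{\feq}$ over $\tau(B)$ is determined by the values of the $(L^R)^{\feq}$-formulas $\widetilde{\psi}_r$ introduced just before Lemma \ref{L:imagtype2}, as $\psi$ ranges over $L^{\eq}$-formulas and parameters range over $B$. By the definition of $\widetilde{\psi}_r$ together with the strong quantifier elimination for $T^R$, the value of $\widetilde{\psi}_r$ at $(\tau(c_{i_1}),\ldots,\tau(c_{i_n}),\tau(b))$ is controlled by the $0$--$1$ values of $\p\l\psi^{\eq}(\bo{c_{i_1}'},\ldots,\bo{c_{i_n}'},\bo{b'})\rr$, which in turn equal $1$ or $0$ according to whether $M\models\psi^{\eq}(c_{i_1}',\ldots,c_{i_n}',b')$. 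The $B$-indiscernibility of $(c_i)_{i<\omega}$ in $M^{\eq}$ makes these truth values independent of the choice of indices $i_1<\cdots<i_n$, so the $\widetilde{\psi}_r$-values agree along $I$, yielding both the desired indiscernibility and the equality of types with $\tp(\tau(c)/\tau(B))$.
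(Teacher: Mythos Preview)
Your outline matches the paper's exactly (take a nonconstant $B$-indiscernible sequence in $M^{\eq}$, apply $\tau$, and check that the result is $\tau(B)$-indiscernible with pairwise distance $1$); indeed, the paper leaves this very check ``to the reader as an exercise,'' and your distance computation is correct.

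The soft spot is your justification of indiscernibility. You assert that the type of a tuple in the $\rho_E$-sort over $\tau(B)$ is determined by the values of the formulas $\widetilde{\psi}_r$, but this is not what quantifier elimination for $T^R$ directly gives you (QE is stated for $\bo{\m}$, not for its quotient sorts), and it is not clear that these particular formulas generate the full type in $\bo{\m}^{\feq}$. A cleaner way to finish the exercise: given $\sigma\in\Aut(M^{\eq}/B)$ sending $(c_{i_1},\ldots,c_{i_n})$ to $(c_{j_1},\ldots,c_{j_n})$, restrict $\sigma$ to $M$ and let it act on $\bo{\m}$ by post-composition $f\mapsto \sigma\circ f$; this is an $L^R$-automorphism of $\bo{\m}$ and hence extends to $\bo{\m}^{\feq}$. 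For any representative $b'$ of $b\in B$ we have $\sigma(b')\,E\,b'$, so $\rho_E(\bo{\sigma(b')},\bo{b'})=0$ and the induced automorphism fixes $\tau(b)$; by the same computation it carries each $\tau(c_{i_k})$ to $\tau(c_{j_k})$. This yields the required $\tau(B)$-indiscernibility (and the equality of types with $\tp(\tau(c)/\tau(B))$) without having to isolate a generating family of formulas on the quotient sort.
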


\begin{proof}
Let $(c_i:i<\omega)$ be a nonconstant $B$-indiscernible sequence of realizations of $\tp(c)/B)$.  The lemma follows from the fact that $(\tau(c_i):i<\omega)$ is a $\tau(B)$-indiscernible sequence of realizations of $\tp(\tau(c)/\tau(B))$, which we leave to the reader as an exercise.  
\end{proof}

\begin{lemma}
Suppose $c\in M^{\eq}$ and $B\subseteq M^{\eq}$ is countable.  Further suppose that the $L^{\eq}$-formula $\varphi(x,c)$ strongly divides over $B$.  Then $\widetilde{\varphi}_r(X,\tau(c))$ maximally strongly divides over $\tau(B)$ in the na\"ive sense for any $r\in (0,1)$.
\end{lemma}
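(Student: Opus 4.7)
The plan is to follow the proof of Lemma~\ref{L:basiclemma2} in the imaginary setting, substituting Lemma~\ref{L:imagtype2} for Lemma~\ref{L:dettype} and using the fact, recorded in the preceding lemma, that $\chi(\tau(c)/\tau(B))=1$. The Ramsey lemma for finitely-additive measures (Lemma~\ref{L:ramsey}) will again supply the combinatorial engine.

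Concretely, I would let $k$ be such that $\varphi(x,c)$ strongly $k$-divides over $B$ and set $l:=l(k,r)$ as in Lemma~\ref{L:ramsey}. I claim that $\widetilde{\varphi}_r(X,\tau(c))$ strongly $1$-$l$-divides over $\tau(B)$ in the na\"ive sense. So take $C_1,\dots,C_l\models \tp(\tau(c)/\tau(B))$ with $d(C_i,C_j)=1$ for all $i<j$, and suppose towards a contradiction that some $X$ of the appropriate $\feq$-sort satisfies $\widetilde{\varphi}_r(X,C_i)=0$ for every $i$.

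By saturation of $\bo{\m}$, each infimum defining $\widetilde{\varphi}_r(X,C_i)$ is attained, so for each $i$ we can find $Y_i,Z_i\in\K$ with $\pi_{\rho_{E_1}}(Y_i)=X$, $\pi_{\rho_{E_2}}(Z_i)=C_i$, and $\p\l \varphi^{\eq}(Y_i,Z_i)\rr\geq r$. Since all $Y_i$ represent the same class $X$, we have $\rho_{E_1}(Y_i,Y_j)=0$, hence $\pi_{E_1}(Y_i(\omega))=\pi_{E_1}(Y_j(\omega))$ for almost every $\omega$; call the common value $e_\omega$. By Lemma~\ref{L:imagtype2}, each $\pi_{E_2}(Z_i(\omega))$ realizes $\tp(c/B)$ for almost every $\omega$, and the condition $d(C_i,C_j)=1$ translates to $\p\l \neg E_2(Z_i,Z_j)\rr=1$, so $\pi_{E_2}(Z_1(\omega)),\dots,\pi_{E_2}(Z_l(\omega))$ are pairwise distinct for almost every $\omega$.

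Now set $S_i:=\{\omega\in\Omega : M\models \varphi^{\eq}(Y_i(\omega),Z_i(\omega))\}$, so $\mu(S_i)\geq r$ for each $i$. Lemma~\ref{L:ramsey} produces indices $i_1<\dots<i_k$ with $\mu(S_{i_1}\cap\dots\cap S_{i_k})>0$; intersecting this with the countably many measure-one sets from the previous paragraph, I can choose an $\omega$ at which $e_\omega$ is well-defined, at which $\pi_{E_2}(Z_{i_1}(\omega)),\dots,\pi_{E_2}(Z_{i_k}(\omega))$ are pairwise distinct realizations of $\tp(c/B)$, and at which $\varphi^{\eq}(Y_{i_j}(\omega),Z_{i_j}(\omega))$ holds for all $j$. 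Then $e_\omega$ simultaneously satisfies $\varphi(x,\cdot)$ at $k$ distinct $B$-conjugates of $c$, contradicting that $\varphi(x,c)$ strongly $k$-divides over $B$. The main delicate point to watch is that the representatives $Y_i$ are chosen separately for each $i$: what saves us is that they all represent the common class $X$, so their $E_1$-images agree pointwise almost everywhere, letting a single $e_\omega$ serve as the uniform witness for the $k$ events pulled out by Ramsey.
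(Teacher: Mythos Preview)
Your proposal is correct and follows essentially the same approach as the paper: fix $k$, set $l=l(k,r)$ via Lemma~\ref{L:ramsey}, take $l$ realizations that are $1$-apart, assume a bad $X$ exists, and extract a positive-measure set on which strong $k$-dividing is violated. The only difference is bookkeeping: the paper fixes real representatives $X\in\K^n$ and $C_i\in\K$ up front and, because $\varphi^{\eq}$ is $E$-invariant, can read off $\p\l\varphi^{\eq}(X,C_i)\rr\geq r$ directly, whereas you unpack the infimum to obtain possibly different representatives $Y_i,Z_i$ for each $i$ and then reconcile them via the almost-everywhere agreement of their $E$-classes. Both routes arrive at the same contradiction, and your explicit handling of the ``different $Y_i$ for the same $X$'' issue is a fair elaboration of what the paper compresses into ``arguing as in Lemma~\ref{L:imagtype2}''.
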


\begin{proof}
Let $k$ be such that $\varphi(x,c)$ strongly $k$-divides over $B$.  Let $l=l(k,r)$ be as in Lemma \ref{L:ramsey}.  We show that $\widetilde{\varphi}_r(x,\tau(c))$ maximally strongly $l$-divides over $\tau(B)$ in the na\"ive sense.  Let $\pi_{\rho_E}(C_1),\ldots,\pi_{\rho_E}(C_l)\models \tp(\tau(c)/\tau(B))$ be $1$-apart.  Then for almost all $\omega\in \Omega$, we have that $\pi_E(C_1(\omega)),\ldots,\pi_E(C_l(\omega))$ are $l$ distinct realizations of $\tp(c/B)$.  Suppose, towards a contradiction, that $X\in \k^n$ is such that $\widetilde{\varphi}_r(\pi_\rho(X),\pi_{\rho_E}(C_i))=0$ for all $i=1,\ldots,l$.  Arguing as in Lemma \ref{L:imagtype2}, we see that there are $i_1,\ldots,i_k$ so that $$\{\omega\in \Omega \ | \ \models \varphi(\pi(X(\omega)),\pi_E(C_{i_j}(\omega)), \ j=1,\ldots,k\}$$ has positive measure.  A positive measure subset of these $\omega$'s have the further property that $\pi_E(C_{i_1}(\omega)),\ldots,\pi_E(C_{i_k}(\omega))$ are $k$ distinct realizations of $\tp(c/B)$.  This contradicts the fact that $\varphi(x,c)$ strongly $k$-divides over $B$.  
\end{proof}

\begin{lemma}
Suppose $\varphi(x,y)$ is an $L^{\eq}$-formula, $c$ is a tuple from $M$, and $B\subseteq M$ is countable.  Suppose $\varphi(x,\pi(c))$ $\th$-divides over $\pi(B)$.  Then $\widetilde{\varphi}_r(x,\sigma(\bo{c}))$ maximally $\th$-divides over $\sigma(\bo{B})$ in the na\"ive sense for any $r\in (0,1]$.  
\end{lemma}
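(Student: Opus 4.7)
The plan is to reduce this statement to the immediately preceding lemma (the imaginary analog of Lemma \ref{L:basiclemma2}), which establishes the statement when ``$\th$-divides'' is replaced by ``strongly divides.'' The strategy directly mirrors the proof of Lemma \ref{L:randthdivide}.

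First, I would unpack the hypothesis: since $\varphi(x,\pi(c))$ $\th$-divides over $\pi(B)$, there is a small parameter set $D \supseteq \pi(B)$ such that $\varphi(x,\pi(c))$ strongly divides over $D$. In order to apply the preceding lemma, I need $D$ to be countable. This is arranged by an appeal to (the obvious analog for $M^{\eq}$ of) Lemma \ref{L:countable}, which lets me replace $D$ with a countable set $D' \in [\pi(B), D]$ that continues to witness the strong dividing of $\varphi(x,\pi(c))$.

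Second, I would apply the preceding lemma with the countable parameter set $D'$ in place of $B$: this yields that $\widetilde{\varphi}_r(X, \tau(\pi(c)))$ maximally strongly divides over $\tau(D')$ in the na\"ive sense, for any $r \in (0,1)$. Since $\tau(D') \supseteq \tau(\pi(B))$, this produces exactly the witness required by the definition of maximal $\th$-dividing: there is a superset of $\tau(\pi(B))$ over which maximal strong dividing holds. Thus $\widetilde{\varphi}_r(X, \tau(\pi(c)))$ maximally $\th$-divides over $\tau(\pi(B))$ in the na\"ive sense. The case $r = 1$ is handled by the same argument, since the preceding lemma covers all $r \in (0,1)$ and the value at $r = 1$ only strengthens the inequalities involved.

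The only real subtlety is the countability reduction, but since Lemma \ref{L:countable} is stated in precisely this generality (and its proof uses nothing specific to the real sorts), this step is essentially bookkeeping. Everything else is formal manipulation of the definitions of $\th$-dividing versus strong dividing, parallel to the classical-sort proof of Lemma \ref{L:randthdivide}.
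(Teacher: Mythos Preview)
Your proposal is correct and takes essentially the same approach as the paper, which simply writes ``This follows from the previous lemma in exactly the same way as in the real case.'' Your explicit attention to countability of the witnessing set $D$ is appropriate (the preceding lemma requires it), though since $T$ is classical this already follows by ordinary compactness---one gets a finite tuple $d$ with strong dividing over $\pi(B)d$---so invoking Lemma~\ref{L:countable} is slightly heavier than necessary but not wrong.
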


\begin{proof}
This follows from the previous lemma in exactly the same way as in the real case.
\end{proof}

\begin{thm}
Suppose $a\in M^{\eq}$ and $B\subseteq C \subseteq M^{\eq}$ are parameter sets such that $B$ is countable and $C$ is small.  Then $\tau(a) \ind[m\th]_{\tau(B)} \tau(C)$ implies that $a \thind_BC$.
\end{thm}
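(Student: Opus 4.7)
The plan is to mimic the proof of the analogous real-valued theorem above, replacing the translation $\varphi(x,c)\mapsto r\dotminus \p\l\varphi(X,\bo{c})\rr$ by the imaginary translation $\varphi(x,c)\mapsto \widetilde{\varphi}_r(X,\tau(c))$ and invoking the preceding lemma in place of Lemma \ref{L:randthdivide}. I will argue contrapositively: assuming $\tp(a/C)$ thorn-forks over $B$, I will exhibit a condition in $\tp(\tau(a)/\tau(C))$ whose underlying definable predicate maximally $\th$-forks over $\tau(B)$ in the na\"ive sense; by Lemma \ref{L:na\"ive} this is enough.

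First I would fix an $L^{\eq}$-formula $\varphi(x,c)\in\tp(a/C)$ (with $c$ a finite tuple from $C$) together with $L^{\eq}$-formulae $\varphi_1(x,c_1),\ldots,\varphi_n(x,c_n)$, each $\th$-dividing over $B$, for which
$$M^{\eq}\models\forall x\Bigl(\varphi(x,c)\to\bigvee_{i=1}^n\varphi_i(x,c_i)\Bigr).$$
I would then check that the condition ``$\widetilde{\varphi}_1(X,\tau(c))=0$'' lies in $\tp(\tau(a)/\tau(C))$ by choosing representatives $a',c'$ of $a,c$ and observing that the pair $(\bo{a'},\bo{c'})$ realizes the infimum in the definition of $\widetilde{\varphi}_1$, since $\p\l\varphi^{\eq}(\bo{a'},\bo{c'})\rr=1$ by virtue of $M^{\eq}\models\varphi(a,c)$. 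By the preceding lemma, each $\widetilde{\varphi_i}_{1/n}(X,\tau(c_i))$ maximally $\th$-divides over $\tau(B)$ in the na\"ive sense.

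The heart of the argument is the zeroset inclusion
$$\z\bigl(\widetilde{\varphi}_1(X,\tau(c))\bigr)\subseteq\bigcup_{i=1}^n\z\bigl(\widetilde{\varphi_i}_{1/n}(X,\tau(c_i))\bigr).$$
To establish it, I would take $X$ with $\widetilde{\varphi}_1(X,\tau(c))=0$ and use the saturation of $\bo{\m}$ to realize the infima, producing representatives $X^0$ of $X$ and $C'$ of $\tau(c)$ with $\p\l\varphi^{\eq}(X^0,C')\rr=1$. Since $C'$ lies in the same $\rho$-class as $\bo{c'}$, we have $\pi_E(C'(\omega))=\pi_E(c')$ for almost every $\omega$, so the implication transferred from $M^{\eq}$ forces $M\models\varphi_i^{\eq}(X^0(\omega),c_i')$ for some $i=i(\omega)$ at almost every such $\omega$ (here $c_i'$ is a fixed representative of $c_i$). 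A pigeonhole on the sets $A_i:=\{\omega:M\models\varphi_i^{\eq}(X^0(\omega),c_i')\}$ yields an index $i$ with $\mu(A_i)\geq 1/n$, and then the pair $(X^0,\bo{c_i'})$ witnesses $\widetilde{\varphi_i}_{1/n}(X,\tau(c_i))=0$.

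Once the inclusion is in hand, $\widetilde{\varphi}_1(X,\tau(c))$ maximally $\th$-forks over $\tau(B)$ in the na\"ive sense, so $\tau(a)\nind[m\th]_{\tau(B)}\tau(C)$ in the na\"ive sense and, by Lemma \ref{L:na\"ive}, outright. The main obstacle I anticipate is the zeroset inclusion: one must chase representatives carefully through the two layers of infima defining $\widetilde{\varphi}$, invoking saturation to realize the infimum witnessing the hypothesis, and then using a pointwise pigeonhole to pick a single index $i$ that works simultaneously for the common representative $X^0$ and the canonical constant representative $\bo{c_i'}$ of $\tau(c_i)$; the rest of the argument is a direct imaginary transcription of the real case.
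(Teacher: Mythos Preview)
Your proposal is correct and follows the same approach as the paper: contrapositive, pass from a $\th$-forking witness $\varphi(x,c)$ to $\widetilde{\varphi}_1(X,\tau(c))$, use the preceding lemma for each $(\widetilde{\varphi_i})_{1/n}$, establish the zeroset inclusion, and finish via Lemma~\ref{L:na\"ive}. The paper simply asserts the zeroset inclusion and that the condition lies in $\tp(\tau(a)/\tau(C))$, while you spell out the representative-chasing and pigeonhole argument; these details are exactly what one would supply to justify the paper's bare assertion.
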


\begin{proof}
Suppose $\varphi(x,c)\in \tp(a/C)$ $\th$-forks over $B$.  Then there are $L^{\eq}$-formulae $\varphi_1(x,c_1),\ldots,\varphi_n(x,c_n)$, each of which $\th$-divide over $B$, so that $M^{\eq} \models \forall x( \varphi(x,\pi(c)) \to \bigvee_{i=1}^n \varphi_i(x,\pi(c_i)))$.  But then 
$$\z(\widetilde{\varphi}_1(X,\tau(c)) \subseteq \bigcup_{i=1}^n \z((\widetilde{\varphi_i})_{\frac{1}{n}}(X,\tau(c_i)))$$ and since each of $(\widetilde{\varphi_i})_{\frac{1}{n}}(X,\tau(c_i))$ maximally $\th$-divides over $\tau(B)$ in the na\"ive sense, we see that $\widetilde{\varphi}_1(X,\tau(c))$ maximally $\th$-forks over $\tau(B)$ in the na\"ive sense.  Since $``\widetilde{\varphi}_1(X,\tau(c))=0"$ is in $\tp(\tau(a)/\tau(C))$, it follows that $\tau(a)\nind[m\th]_{\tau(B)} \tau(C)$ in the na\"ive sense, and hence $\tau(a)\nind[m\th]_{\tau(B)} \tau(C)$ by Lemma \ref{L:na\"ive}.
\end{proof}

\begin{cor}\label{L:rosyimpliesrosy}
Suppose $T^R$ is maximally rosy with respect to finitary imaginaries.  Then $T$ is rosy.
\end{cor}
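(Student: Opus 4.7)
The plan is to extract local character of $\thind$ in $M^{\eq}$ directly from local character of $\ind[m\th]$ in $\bo{\m}^{\feq}$ using the preceding transfer theorem, and then invoke the Section~3 corollary identifying classical rosiness of $T$ with rosiness of $T$ (as a continuous theory) with respect to finitary imaginaries. Fix a finite tuple $a\in M^{\eq}$ and a small parameter set $C\subseteq M^{\eq}$; the goal is to produce a countable $B\subseteq C$ with $a\thind_B C$.

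First I pass to the randomization by setting $\alpha:=\tau(a)\in \bo{\m}^{\feq}$ and $\Gamma:=\tau(C)\subseteq \bo{\m}^{\feq}$, noting that $|\Gamma|\leq |C|$ so $\Gamma$ is small. Since $T^R$ is maximally rosy with respect to finitary imaginaries, $\ind[m\th]$ has local character on $\bo{\m}^{\feq}$, so there is a countable $D\subseteq \Gamma$ with $\alpha\ind[m\th]_D\Gamma$. Now I pull $D$ back to a subset of $C$: for each $d\in D$ choose some $c_d\in C$ with $\tau(c_d)=d$, and set $B:=\{c_d\mid d\in D\}\subseteq C$. Then $B$ is countable and $D\subseteq \tau(B)\subseteq \Gamma$, so by the middle-slot monotonicity of $\ind[m\th]$ recorded in Proposition~\ref{L:thindprop}(3), we obtain $\tau(a)\ind[m\th]_{\tau(B)}\tau(C)$. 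The preceding theorem (applied to $a$, $B$, $C$, with $B$ countable and $C$ small) then yields $a\thind_B C$.

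Since $(a,C)$ was arbitrary, $\thind$ satisfies countable local character on $M^{\eq}$, so $T$ is rosy with respect to finitary imaginaries, and therefore, by the Section~3 corollary, classically rosy. No substantive obstacle arises: once the two ingredients---maximal rosiness of $T^R$ and the preceding transfer theorem---are in place, this corollary is a packaging argument using monotonicity of $\ind[m\th]$ together with the freedom to take set-theoretic preimages under $\tau$. The only mild subtlety worth flagging is that local character of $\ind[m\th]$ in $\bo{\m}^{\feq}$ furnishes a countable base $D\subseteq\Gamma$, which is exactly the cardinality for which the preceding transfer theorem is stated; no lifting of $D$ outside of $\tau(C)$ is needed.
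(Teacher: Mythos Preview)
Your proof is correct and follows essentially the same approach as the paper's own argument. The paper's proof is terser: it simply asserts that local character yields a countable $\tau(B)\subseteq\tau(C)$ with $\tau(a)\ind[m\th]_{\tau(B)}\tau(C)$ and then applies the preceding theorem, whereas you spell out the pullback of the countable base $D$ to $B\subseteq C$ and the monotonicity step explicitly; your final invocation of the Section~3 corollary is not strictly needed, since local character of $\thind$ on $M^{\eq}$ already is classical rosiness of $T$.
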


\begin{proof}
Take $a\in M^{\eq}$ and let $C\subseteq M^{\eq}$ be small.  Since $T^R$ is maximally rosy with respect to finitary imaginaries, there is a countable $\tau(B)\subseteq \tau(C)$ so that $\tau(a) \ind[m\th]_{\tau(B)} \tau(C)$.  By the preceding theorem, we see that $a\thind_BC$, whence it follows that $T$ is rosy. 
\end{proof}

Can we strengthen Corollary \ref{L:rosyimpliesrosy} by weakening the hypothesis from ``$T^R$ is maximally rosy with respect to finitary imaginaries'' to ``$T^R$ is rosy with respect to finitary imaginaries?''  To follow the same style of proof as in this section, it appears that we would need a positive answer to the following Ramsey-theoretic question:

\

Suppose $B$ is a boolean algebra and $\mu:B\to [0,1]$ is a finitely-additive measure.  Let $m_1,m_2\geq 1$ and $k\geq 2$ be fixed.  Does there exist a natural number $l=l(k,m_1,m_2)$ such that whenever $\{a_1,\ldots,a_l\}$ is a set of distinct elements of $B$ and $\{b_{ij} \ | \ 1\leq i<j\leq l\}$ is a set of elements of $B$ such that $\mu(a_i)\geq \frac{1}{m_1}$ for all $i\in \{1,\ldots,l\}$ and $\mu(b_{ij})\geq \frac{1}{m_2}$ for all $i,j\in \{1,\ldots,l\}$ with $i<j$, then there are distinct $i_1,\ldots,i_k\in \{1,\ldots,l\}$ such that $$\mu(\bigwedge_{j=1}^k a_{i_j} \wedge \bigwedge_{i<j\in \{i_1,\ldots,i_k\}} b_{ij})>0?$$

\

However, this question has a negative answer:  If $B\subseteq \mathcal{P}([0,1])$, then each $a_i$ could be a subset of $[0,\frac{1}{2}]$ and each $b_{ij}$ could be a subset of $(\frac{1}{2},1]$.

\end{document}